\newcommand{\1}{\mathbf 1}
\newcommand{\0}{\mathbf 0}
\newcommand{\spn}{\mathrm{span}}
\newcommand{\diag}{\mathrm{diag}}
\newcommand{\E}{\mathrm{E}}
\newcommand{\rank}{\mathrm{rank}}
\newcommand{\tr}{\mathrm{\,tr}}
\newcommand{\vecc}{\mathrm{vec}}
\newcommand{\vech}{\mathrm{\, vech}}
\newcommand{\unvecc}{\mathrm{unvec}}
\newcommand{\spc}{{\mathcal S}}
\newcommand{\X}{{\mathbf X}}
\newcommand{\x}{{\mathbf x}}
\newcommand{\Xbb}{\mathbb{X}}
\newcommand{\Z}{{\mathbf Z}}
\newcommand{\R}{\mathbf{R}}
\newcommand{\I}{\mathbf I}
\newcommand{\Uu}{\mathbf U}
\newcommand{\Gg}{\mathbf G}
\newcommand{\D}{\mathbf D}
\newcommand{\Cc}{\mathbf C}
\newcommand{\J}{\mathbf J}
\newcommand{\A}{\mathbf A}
\newcommand{\Ahat}{\widehat{\A}}
\newcommand{\F}{\mathbf F}
\newcommand{\f}{\mathbf f}
\newcommand{\T}{\mathbf T}
\newcommand{\V}{\mathbf V}
\newcommand{\W}{\mathbf W}
\newcommand{\w}{\mathbf w}
\newcommand{\K}{\mathbf K}
\newcommand{\B}{\mathbf B}
\newcommand{\Pb}{\mathbf P}
\newcommand{\Q}{\mathbf Q}
\newcommand{\M}{\mathbf M}
\newcommand{\bb}{\mathbf b}
\newcommand{\bbs}{\mathbf{\scriptstyle b}}
\newcommand{\abf}{\mathbf a}
\newcommand{\cb}{\mathbf c}
\newcommand{\hbf}{\mathbf h}
\newcommand{\Hbf}{\mathbf H}
\newcommand{\Hbfs}{\scriptstyle{\Hbf}}
\newcommand{\Lbf}{\mathbf L}
\newcommand{\Sigmabf}{\bm{\Sigma}}
\newcommand{\Deltabf}{\bm{\Delta}}
\newcommand{\Gammabf}{\bm{\Gamma}}
\newcommand{\Gammabfs}{\bm{{\scriptstyle \Gamma}}}
\newcommand{\Omegabf}{\bm{\Omega}}
\newcommand{\omegabf}{\bm{\omega}}
\newcommand{\Upsilonbf}{\bm{\Upsilon}}
\newcommand{\mubf}{\bm{\mu}}
\newcommand{\muxbar}{\mubf_{\textsc{\tiny $\X$}}}
\newcommand{\muhbar}{\mubf_{\textsc{\tiny $\Hbf$}}}
\newcommand{\betabf}{\bm{\beta}}
\newcommand{\alphabf}{\bm{\alpha}}
\newcommand{\alphabfhat}{\widehat{\alphabf}}
\newcommand{\alphabfs}{\bm{\scriptstyle \alpha}}
\newcommand{\taubf}{\bm{\tau}}
\newcommand{\etabf}{\bm{\eta}}
\newcommand{\thetabf}{\bm{\theta}}
\newcommand{\xibf}{\bm{\xi}}
\newcommand{\kappabf}{\bm{\kappa}}
\newcommand{\iotabf}{\bm{\iota}}
\newcommand{\varthetabf}{\bm{\vartheta}}
\newcommand{\real}{{\mathbb R}}
\newcommand{\paramall}{\Deltabf, \mubf,  \muhbar, \A, \betabf, \taubf_0, \taubf}
\DeclareMathOperator*{\argmin}{arg\,min}
\newcommand{\aVar}{\mathrm{avar}}
\newcommand{\norm}[1]{\left\lVert#1\right\rVert}
\begin{document}

\title{Sufficient reductions in regression with mixed predictors}

\author{\name Efstathia Bura (Corresponding author) \email efstathia.bura@tuwien.ac.at \\
       \addr  Institute of Statistics and Mathematical Methods in Economics\\
       Faculty of Mathematics and Geoinformation\\
       TU Wien\\
       Vienna, 1040, Austria
       \AND
       \name Liliana Forzani \email liliana.forzani@gmail.com \\
       \addr Facultad de Ingenier\'ia Qu\'imica\\
       Universidad Nacional del Litoral\\
       Researcher of CONICET\\
       Santa Fe, Argentina
       \AND
       \name Rodrigo Garc\'ia Arancibia \email r.garcia.arancibia@gmail.com \\
       \addr Instituto de Econom\'ia Aplicada Litoral-FCE-UNL\\
       Universidad Nacional del Litoral\\
       Researcher of CONICET\\
       Santa Fe, Argentina
       \AND
       \name Pamela Llop \email 
       lloppamela@gmail.com \\
       \addr Facultad de Ingenier\'ia Qu\'imica\\
       Universidad Nacional del Litoral\\
       Researcher of CONICET\\
       Santa Fe, Argentina
       \AND
       \name Diego Tomassi \email 
       diegotomassi@gmail.com \\
       \addr Facultad de Ingenier\'ia Qu\'imica\\
       Universidad Nacional del Litoral\\
       Researcher of CONICET\\
       Santa Fe, Argentina}

\editor{Genevera Allen, Kenji Fukumizu, Maya Gupta}

\maketitle

\begin{abstract}
Most data sets comprise of measurements on continuous and categorical variables. In regression and classification Statistics literature,  modeling high-dimensional mixed predictors has received limited attention. In this paper we study the general regression problem of inferring on a variable of interest based on high dimensional mixed continuous and binary predictors. The aim is to find a lower dimensional function of the mixed predictor vector that contains all the modeling information in the mixed predictors for the response, which can be either continuous or categorical. The approach we propose identifies sufficient reductions by reversing the regression and modeling the mixed predictors conditional on the response.  
We derive the maximum likelihood estimator of the sufficient reductions, asymptotic tests for dimension, and 
a regularized estimator, which simultaneously achieves variable (feature) selection and dimension reduction (feature extraction).
We study the performance of the proposed method and compare it with other approaches through simulations and  real data examples.
\end{abstract}

\begin{keywords}
  High-dimensional, Multivariate Bernoulli, Regularization, Feature selection, Feature extraction
\end{keywords}

\section{Introduction}\label{sec:intro}

Most data sets comprise of measurements on a mixture of categorical and continuous features. Examples abound in
the biomedical and health sciences, neuro-imaging, genomics, finance, social media, and internet advertising. 
The first statistical approach to modeling the dependence structure of mixed data we found in the literature is the \textit{location model} of \cite{OlkinTate1961}. The location model uses correlation as a measure of dependence and bypasses the mixed nature of the data by grouping the continuous variables using the categorical ones and requiring they be normally distributed with different means but same variance within the groups. 

More recently, Markov Networks, or undirected graphical models, that encode pairwise conditional dependence relationships among random variables have been used to model multivariate mixed data.  With  few exceptions \citep{Yangetal2014,Yangetal2014b, Yangetal2015, Chenetal2015},  mixed continuous and categorical data are modeled with  the Gaussian Graphical Model (GGM) in a manner similar to the location model. Binary variables are used to define the different categories and GGM requires the continuous variables be conditionally normal and pairwise conditionally independent within categories. References for GMMs for  low-dimensional mixed data include  \cite{LauritzenWermuth1989}, \cite{Lauritzen1996}, \cite{YuanLin2007}, \cite{WainwrightJordan2008}, and in the high-dimensional setting, \cite{Chengetal14, Chengetal2017} and \cite{LeeHastie2015}. In particular,  \cite{Chengetal2017} proposed a simplified version of the conditional Gaussian distribution that reduces the number of parameters significantly while maintaining flexibility. 

Both GGMs and the location model are unsupervised approaches for mixed data that do not include an output of interest. In the case of a categorical output, approaches for the treatment of mixed, in particular, binary and continuous input variables, include methods based on nonparametric density estimation \citep{AitchisonAitken1976}, the use of logistic discrimination \citep{DayKerridge1967}, in
 which the probability of group membership is assumed to be a logistic function of the
 observed variates \citep{Anderson1972,Anderson1975}, and a likelihood ratio classification rule \citep{Krzanowski75} based on the  location model of \citet{OlkinTate1961}. \citet{Krzanowski1993} surveys and summarizes the associated developments. More recently, the location model has been used in multiple imputation [see, e.g., \cite{JavarasVanDyk2003}, \citet[Ch. 4, Sec. 4.4]{Buuren2018}].

In this paper we study the general regression and classification problem with high-dimensional mixed predictors. Specifically, we consider the conditional distribution of
\begin{equation}\label{modelmix}
Y\mid \X,\Hbf,
\end{equation} 
where the response $Y$ is either continuous or categorical, $\X=(X_1,X_2, \dots,X_p)^T$ is a vector of $p$ continuous, and  $\Hbf=(H_1, H_2, \dots, H_q)^T$ is a vector of $q$ binary predictor variables. 
Our aim is to find a lower dimensional function of the mixed predictor vector $\Z=(\X^T,\Hbf^T)^T$ that encapsulates \textit{all} information the mixed predictors contain for the response $Y$. Specifically, our target is the identification of a function, other than the identity, 
$\R: \real^p \times \real^q \to \real^d$ such that $F(Y \mid \Z)=F(Y \mid \R(\Z))$, 
where $F(\cdot|\cdot)$ denotes the conditional cumulative distribution function of the response given the predictors. Such a function $\R$ is called a \textit{sufficient reduction} of the regression of $Y$ on $\Z$.

This seemingly ambitious goal turns out to be  surprisingly simple using the inventive tool of \textit{inverse regression}. 
When $Y$ and $\Z$ are both random, inverse regression is based on the equivalence of the following two statements [see \cite{Cook2007}, \cite{BuraDuarteForzani2016}, \cite{BuraForzani2015}], 
\begin{itemize}
    \item[(i)] $Y\mid \Z \,{\buildrel d \over =}\, Y \mid \R(\Z)$
    \item[(ii)] $\Z \mid(Y,\R(\Z)) \,{\buildrel d \over =}\, \Z \mid\R(\Z)$
\end{itemize}
 where ${\buildrel d \over =}$ signifies equal in distribution. Statement (i) is an alternative definition of a sufficient reduction for the forward regression in \eqref{modelmix} and (ii) is the usual definition of a sufficient \textit{statistic} for a \textit{parameter} $Y$ indexing the distribution of the mixed $\Z$. 
 The equivalence of (i) and (ii) obtains that if one considers $Y$ as a \textit{parameter}, the sufficient ``statistic'' for $Y$ \textit{is the sufficient reduction} for the regression of $Y$ on $\Z$. In consequence, in order to find a sufficient reduction for \textit{the forward regression of $Y$ on $\Z$}  in \eqref{modelmix}, we can equivalently solve the \textit{inverse problem} of finding  a sufficient statistic for  the regression of $\Z$ on $Y$.

Our approach exploits the  factorization 
\begin{equation}\label{cdf.fact}
F(\X,\Hbf \mid Y)= F(\X \mid Y,\Hbf)F(\Hbf \mid Y),
\end{equation}
by allowing us to treat the continuous and binary  predictors separately, while at the same time we account for their interdependence in their relationship with $Y$ in Section \ref{model}. An advantageous aspect of  \eqref{cdf.fact} is that it requires fewer parameters in order to characterize the distributional structure of the data.  

In Section \ref{SufRed} we  model $\Hbf \mid Y$ as multivariate Bernoulli, and $\X \mid (Y,\Hbf)$ as multivariate normal,  in analogy to the Gaussian graphical model and the location model in unsupervised  multivariate analysis of mixed data. We show that the resulting distribution \eqref{cdf.fact} belongs to the exponential family, and derive  sufficient reductions for the regression $Y \mid (\X,\Hbf)$ from the two separate regressions,  $\X \mid (Y,\Hbf)$ and $ \Hbf \mid Y$ in Section \ref{SufRed}. 
We compute the maximum likelihood estimator of  the sufficient reduction in Section \ref{estimation}, its asymptotic distribution in Section \ref{asympt.distn}, and an asymptotic test for the dimension of the sufficient reduction in Section \ref{testdimension}.   
 We complete our treatment with a method for \textit{simultaneous} sufficient dimension reduction and variable selection in  Section \ref{varsel}.

Section \ref{simulation} contains an extensive simulation study that demonstrates the competitive performance of our approach. Furthermore, we show the superior performance of our methods as compared with generalized linear models and a version of principal component regression that allows for mixed predictors in the analysis of three data sets in Section \ref{realdata}. 

Even though our focus in this paper is the regression of the usually univariate $Y$ on the mixed $\Z$ vector, our development results in a new multivariate regression method for a \textit{mixed continuous and binary response}, on which we comment further as we conclude in Section \ref{discussion}.

\section{The Model}\label{model}

We start by specifying the notation we use throughout.
The $\vecc$ operator converts its  matrix argument into a column vector. More precisely, if $\Gg$ is an $m\times n$ matrix then $\vecc(\Gg)$ is an $m n\times 1$ vector obtained by stacking the columns of $\Gg$.
The $\unvecc$ operator is such that $\unvecc(\vecc(\Gg)) = \Gg$. We let $k_q = q(q-1)/2$ and $m_p = p(p+1)/2$.
The $\vech$ operator converts the lower half of a matrix including the main diagonal to a vector. That is, if $\Gg$ is a square $q\times q$ matrix then $\vech(\Gg)$ is a $m_q\times 1$ vector obtained by stacking the columns of the lower triangular  part of $\Gg$ including the diagonal.
There is a unique $\D_q \in {\mathbb R}^{q^2 \times q (q+1)/2}$  and  $\Cc_q  \in {\mathbb R}^{q (q+1)/2\times q^2}$ such that $\vecc(\Gg) = \D_q \vech(\Gg)$ and $\vech(\Gg) = \Cc_q \vecc(\Gg)$ for any $\Gg$ symmetric  $q \times q$ matrix.

{The matrix $\Lbf_q \in {\mathbb R}^{q \times q(q+1)/2}$ has entries 1 and 0, so that $\Lbf_q\Cc_q$ is equal to $\Cc_q$ but replacing the values 1/2 by zeros.   
The matrix $\J_q \in {\mathbb R}^{k_q \times q(q+1)/2}$ has entries 1 and 0, so that $\J_q\Cc_q$ is equal to $\Cc_q$ but replacing the ones by zeros. A projection onto to the columns of $b$ is denoted ${\bf P}_b $ and the projection onto the orthogonal complement of $b$ will be denotes as ${\bf Q}_b$.}

To regress $(\X, \Hbf)$ on $Y$, we model  $\X \mid (Y,\Hbf)$ and $ \Hbf \mid Y$ separately and use the factorization in \eqref{cdf.fact}.

\subsection{The distribution of $\X \mid (\Hbf,Y)$}\label{continuous}

We let the $p$-dimensional vector of continuous random variables $\X|(\Hbf, Y)$ be multivariate normal with
\begin{equation}\label{modelbase}
\X\mid \Hbf,Y \sim  N\left(\muxbar + \A (\f_Y-\bar{\f}_Y) + \betabf (\Hbf-\muhbar), \Deltabf\right),
\end{equation}
where $\muxbar=\E_{\X}(\X)$, $\muhbar=\E_{\Hbfs}(\Hbf)$, $\f_Y: \real\rightarrow \real^{r}$ is a known function of $Y$,  $\bar{\f}_Y =  \E_Y(\f_Y)$, $\A:p \times r$, and $\betabf: p \times q$, are unconstrained parameter matrices,  and $\Deltabf$ is a $p\times p$ positive definite covariance matrix.
For example, if the response is continuous, $\f_Y$ can be a vector  of polynomials of order $r$, or, in order to avoid multicollinearity, of a set of $r$ orthonormal basis functions. If the response is categorical with values in one of $h$ categories $C_k$,   $k=1,\dots,h$, we set $r=h-1$ and let the $k$-th element of $\f_Y$ to be $I(Y \in C_k)$, where $I$ is the indicator function.  To simplify notation, henceforth $\f_Y$ will signify the centered $\f_Y-\bar{\f}_Y$.

The probability density function of $\X\mid (\Hbf,Y)$ in model (\ref{modelbase}) is
\begin{align}\label{ising-vech3}
f(\X\mid \Hbf,Y=y) &= \frac{1}{\sqrt{2\pi} \sqrt{|\Deltabf|}} \exp \bigg\{ 
-\frac{1}{2} \Big((\X - \muxbar) - \A \f_y - \betabf (\Hbf-\muhbar)\Big)^T   \notag \\ & \hspace{4cm} \Deltabf^{-1} \Big((\X - \muxbar) - \A \f_y - \betabf (\Hbf-\muhbar) \Big)\bigg\}.
\end{align}

\subsection{The distribution of $\Hbf \mid Y$}\label{Binary}

The joint distribution of a random vector whose elements are binary random variables is modelled with the multivariate Bernoulli distribution [see \cite{Whittaker2009}; \cite{Dai2012}; \cite{Daietal2013}]. Its  probability mass function involves terms representing third and higher order moments of the random variables. The Ising model \citep{Ising1925} is frequently used instead  to alleviate the complexity of modeling as it includes up to second order interactions among the binary variables.  
For the multivariate binary regression $\Hbf|Y$ we use the Ising with covariates model introduced in \cite{Chengetal14}, where covariates are incorporated directly. 

Let  $\mathcal H =$ all possible combinations of $\Hbf \in \{0,1\}^q$,  $\Hbf_{-j}=(H_1,\ldots,H_{j-1},H_{j+1},\ldots,H_q)$, $\Hbf_{-i,-j}=(H_1,\ldots,$ $H_{i-1},H_{i+1},\ldots$,$H_{j-1},H_{j+1},\ldots,H_q),$ $i,j=1,\ldots,q$. The joint probability mass function of the $q$-dimensional vector of binary variables $\Hbf$ conditional on $Y$  is [see  \cite{Chengetal14}] 
\begin{align}\label{ising-vech1}
P(\Hbf\mid Y=y)&= \frac{1}{G(\Gammabf_y)}\exp\left\{\vech^T(\Hbf \Hbf^T)\vech(\Gammabf_y)\right\},
\end{align}
where 
$
G(\Gammabf_y) = \sum_{\Hbf \in \mathcal H}\exp\left(\vech^T(\Hbf \Hbf^T)\vech(\Gammabf_y)\right),
$ 
and 
 $\Gammabf_y=(\gamma_{ij}^{y})$ is a $q\times q$ symmetric matrix with elements 
\begin{align*}
& \gamma^y_{jj}= \log \left( \frac{\Pr(H_j=1\mid\Hbf_{-j}= 0,y)}{1-\Pr(H_j=1|\Hbf_{-j}= 0,y)} \right),  \\
& \\
& \gamma^y_{ij}=\log \frac{\Pr(H_i=1,H_j=1\mid 0,y)\Pr(H_i=0,H_j=0\mid \Hbf_{-i,-j}= 0,y)}{\Pr(H_i=1,H_j=0\mid \Hbf_{-i,-j}= 0,y)\Pr(H_i=0,H_j=1\mid\Hbf_{-i,-j}= 0,y)},  
\end{align*}
for $i\not=j $.

A linear model with independent variables $\f_Y \in \real^{r}$ is a natural choice for  each $\gamma^{y}_{ij}$,
\begin{equation}\label{gammadey}
\gamma^y_{ij}=\tau^{*}_{ij,0} + \taubf_{ij}^T \f_Y, \hspace{1cm} i,j=1,\ldots,q, 
\end{equation}
where $\taubf_{ij}^T=(\tau_{ij,1}, \ldots, \tau_{ij,r})$ is a vector of parameters independent of $Y$, and $\tau^{*}_{ij,0}$ is the intercept for each $(i,j)$. Here again, $\f_Y$   is also centered, and can be different from that in  \eqref{ising-vech3}, even though, as will be seen later, choosing the same $\f_Y$ simplifies the formula  for the joint distribution in \eqref{jointlik} as well as the derivation of a sufficient reduction for the regression of $Y$ on $\X,\Hbf$.

Next we define the $q\times q$ matrices, $\taubf_0$ and $\taubf_{k}$, $k=1,\ldots, r$, as $[\taubf^{*}_0]_{ij}=\tau^{*}_{ij,0}$ and $[\taubf_{k}]_{ij}=\tau_{ij,k}$   with $i,j=1,\ldots,q$ and $k=1,\ldots, r$. We let  $\taubf_0 = \vech ({\taubf}^{*}_0)$, a $q(q+1)/2$ vector, and $\taubf = \left( \vech ({\taubf}_1),\dots, \vech ({\taubf}_r) \right)$, a $q(q+1)/2 \times r$ matrix, so that the $q(q+1)/2$ vector $\vech(\Gammabf_y)$ is
\[
\vech(\Gammabf_y) = \taubf_0 + \taubf\f_y.
\]
Under \eqref{gammadey} the probability mass function of $\Hbf|Y$ in (\ref{ising-vech1}) is
\begin{equation}\label{ising-vech2}
P(\Hbf\mid Y=y) = \frac{1}{G(\Gammabf_y)}\exp\left\{\vech^T(\Hbf \Hbf^T)(\taubf_0 + \taubf\f_y)\right\},
\end{equation}
with
$
G(\Gammabf_y) = \sum_{\Hbf \in \mathcal H}\exp\left(\vech^T(\Hbf \Hbf^T)(\taubf_0 + \taubf\f_y)\right).$

Under \eqref{ising-vech2} and \eqref{ising-vech3}, the joint distribution of the inverse regression ($\X,\Hbf \mid Y$) has probability density function
\begin{align}
f(\X,\Hbf\mid Y=y) &= f(\X \mid \Hbf,Y=y)f(\Hbf \mid Y=y) \notag\\  &=\frac{1}{\sqrt{2\pi} \sqrt{|\Deltabf|}} \exp  \bigg\{-\frac{1}{2} \Big((\X - \muxbar) - \A \f_y - \betabf (\Hbf-\muhbar)\Big)^T   \notag  \\ & \hspace{4.5cm} \Deltabf^{-1} \Big((\X - \muxbar) - \A \f_y - \betabf (\Hbf-\muhbar) \Big)\bigg\} \notag \notag \\ & \hspace{1cm} \times
\frac{1}{G(\Gammabf_y)}\exp\bigg\{\vech^T(\Hbf \Hbf^T)\left(\taubf_0 + \taubf\f_y\right)\bigg\}. \label{jointlik}
\end{align}
Our regression model for the mixed vector $\Z$ is similar to the regression model of \cite{FitzmauriceLaird1997} with the difference that we do not allow $\mubf_{\Hbfs}$ to vary with $Y$ in \eqref{ising-vech3}. This results in different maximum likelihood estimates for the parameters in \eqref{jointlik} in Section \ref{MLEs}. 

\section{Sufficient Reductions}\label{SufRed}

We focus on the regression problem \eqref{modelmix}, where we aim to identify a reduction $\R(\Z)$ such that $Y\mid \Z \,{\buildrel d \over =}\, Y \mid \R(\Z)$. Since the latter is equivalent to $\Z \mid(Y,\R(\Z)) \,{\buildrel d \over =}\, \Z \mid\R(\Z)$, as discussed in the introduction, we will derive the sufficient reduction $\R(\Z)$ using \eqref{cdf.fact}.

Of central importance to our development is showing that the density of $(\X,\Hbf) \mid Y$ in \eqref{jointlik} belongs to  the exponential family of distributions.  In 
Appendix \ref{A}, we express (\ref{jointlik}) as \begin{equation}\label{genExpFam}
f(\X,\Hbf\mid Y=y)= h(\X,\Hbf) \exp\left(\T^T(\X,\Hbf)\etabf_y    - \psi(\etabf_y )\right),
\end{equation}
which belongs to the natural exponential family of distributions [see, e.g., \cite{Morris2006}]. In \eqref{genExpFam}, 
$h(\X,\Hbf) = (2\pi)^{-1/2}$, the sufficient statistic is 
\begin{equation}\label{suffstat}
\T(\X,\Hbf)= \left( \begin{array}{c} 
\X \\ 
\Hbf\\
- \frac{1}{2} \D_p^T \D_p \vech (\X \X^T)\\
\vecc( \X \Hbf^T)\\
\J_q {\color{black} \vech } (\Hbf \Hbf^T)
\end{array}\right),
\end{equation}
the natural parameters are 
\begin{align}\label{naturalpar}
\etabf_y &=\left(\begin{array}{c}
\etabf_{y1}\\
\etabf_{y2}\\
\etabf_{y3}\\
\etabf_{y4}\\
\etabf_{y5}
\end{array}\right) = \left(\begin{array}{cccccccc}
 \I_p & \f_y^T \otimes \I_p  & 0  & 0  & 0  & 0  & 0  & 0  \\ 
 0 & 0  & \I_q &\f_y^T \otimes \I_q & 0  & 0  & 0  & 0  \\ 
 0 & 0  & 0  & 0  & \I_{m_p} & 0  & 0  & 0  \\ 
 0 & 0  & 0  & 0  & 0  & \I_{pq} & 0  & 0  \\ 
 0 & 0  & 0  & 0  & 0  & 0  & \I_{k_q} & \f_y^T \otimes \I_{k_q} 
 \end{array} \right) 
 \left(\begin{array}{c}
\varthetabf_1\\
\varthetabf_2\\
\varthetabf_3\\
\varthetabf_4\\
\varthetabf_5\\
\end{array}\right) \notag\\
&\doteq \F_y \varthetabf,
\end{align}
with $\varthetabf^T = (\varthetabf_1^T, \varthetabf_2^T, \varthetabf_3^T, \varthetabf_4^T, \varthetabf_5^T)^T$ where
\begin{align}
\varthetabf_1&=   
\begin{pmatrix} \varthetabf_{1,0}  \\ \varthetabf_{1,1}
\end{pmatrix} = \begin{pmatrix}
\Deltabf^{-1} \muxbar - \Deltabf^{-1} \betabf \mubf_{\Hbf}  \\
\vecc (\Deltabf^{-1}\A )
\end{pmatrix} : (p +pr) \times 1,\notag  \\
\varthetabf_2 &=  \begin{pmatrix} \varthetabf_{2,0}  \\ \varthetabf_{2,1}\end{pmatrix} =  \begin{pmatrix}
-\betabf^T   \Deltabf^{-1}\muxbar + \betabf^T \Deltabf^{-1} \betabf \mubf_{\Hbf}+ \Lbf_q \taubf_0 -\frac{1}{2} \Lbf_q \D_q^T \vecc (\betabf^T \Deltabf^{-1} \betabf)  \\
\vecc(\Lbf_q \taubf - \betabf^T \Deltabf^{-1} \A)
\end{pmatrix}: (q +qr) \times 1, \notag \\
{\varthetabf}_3&=  \varthetabf_{3,0} = \vech (\Deltabf^{-1}): k_p \times 1, \label{varetas} \\ {\varthetabf}_4&=  \varthetabf_{4,0} = \vecc (\Deltabf^{-1}\betabf): pq \times 1, \notag \\
\varthetabf_5 &=   \begin{pmatrix} \varthetabf_{5,0} \\ \varthetabf_{5,1}\end{pmatrix}= \begin{pmatrix}
 - \frac{1}{2} \J_q \D^T_q \vecc   (\betabf^T \Deltabf^{-1} \betabf ) + \J_q \taubf_0  \\
 \vecc (\J_{q} \taubf)
 \end{pmatrix},\notag 
\end{align} 
and
\begin{eqnarray} \label{psi}
\psi (\etabf_y) &=& - \frac{1}{2} \log |  \unvecc (\D_p {\etabf}_{y3})| + \log(G(\Gammabf_y)) + \frac{1}{2} {\etabf}_{y1}^T  (\unvecc (\D_p {\etabf}_{y3}))^{-1} {\etabf}_{y1}  \\
&\doteq & \psi_1 (\etabf_y) + \psi_2(\etabf_y) + \psi_3 (\etabf_y),  \notag 
\end{eqnarray}
with
\begin{eqnarray}\label{S}
G(\Gammabf_y)&=& \sum_H \exp \left[ \left( \J_q C_q \vecc (\Hbf \Hbf^T)\right)^T \left({\etabf}_{y5 }+   \J_q \frac{1}{2}{\D}^T_q {\vecc}(  \bar{\etabf}_{y4}^T (\unvecc (\D_p {\etabf}_{y3}))^{-1} \bar{\etabf}_{y4}) )\right.\right)\\
&&\left. + \Hbf^T \left({\etabf}_{y2}+ \bar{\etabf}_{y4}^T (\unvecc (\D_p {\etabf}_3))^{-1}{\etabf}_{y1} + \frac{1}{2}\Lbf_q {\D}_q^T \vecc( \bar{\etabf}_{y4}^T (\unvecc (\D_p {\etabf}_{y3}))^{-1}\bar{\etabf}_{y4}))\right)\right],  \notag
\end{eqnarray}
where $\bar{\etabf}_{y4}=\unvecc ({\etabf}_{y4})$. 

For any matrix $\V$, let $\spc_{\V}$ denote the span of the columns of $\V$; that is,  $\spc_{\V}= \spn(\V)$. Theorem \ref{propo1} obtains the sufficient reduction for the  regression of $Y$ on $(\X, \Hbf)$ using a result from \cite{BuraDuarteForzani2016}.

\begin{theorem}\label{propo1}
Suppose that $(\X,\Hbf)\mid Y$ has  density given by (\ref{genExpFam}). The minimal sufficient reduction for the regression $Y\mid (\X,\Hbf)$ is
\begin{align}
\R(\X,\Hbf)&=\alphabf_{\mathbf{a}}^T\left(\T(\X,\Hbf)-\E(\T(\X,\Hbf))\right), \label{reduction1}
\end{align}
where  $\T(\X,\Hbf)$ is given by (\ref{suffstat}) and $\alphabf_{\mathbf{a}}$ is a basis for $\spc_{\alphabf_{\mathbf{a}}}=\spn\{\etabf_Y - \E({\etabf_Y}), Y\in \mathcal{Y}\}$,  with $\etabf_Y$ 
given in \eqref{naturalpar}. 
\end{theorem}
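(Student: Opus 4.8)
The plan is to invoke directly the general exponential-family reduction result of \cite{BuraDuarteForzani2016}, whose hypotheses we have already arranged to hold. The decisive preparatory work is the representation \eqref{genExpFam}, established in Appendix \ref{A}, which exhibits $(\X,\Hbf)\mid Y$ as a natural exponential family with sufficient statistic $\T(\X,\Hbf)$ from \eqref{suffstat} and natural parameter $\etabf_y = \F_y\varthetabf$ from \eqref{naturalpar}. With this in hand, the theorem is essentially a specialization of the cited result, so most of the argument is bookkeeping rather than new computation.

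First I would fix a reference value $y_0 \in \mathcal{Y}$ and form the conditional likelihood ratio
\[
\frac{f(\X,\Hbf\mid Y=y)}{f(\X,\Hbf\mid Y=y_0)} = \exp\left(\T^T(\X,\Hbf)(\etabf_y - \etabf_{y_0}) - \big(\psi(\etabf_y)-\psi(\etabf_{y_0})\big)\right),
\]
in which the base measure $h(\X,\Hbf)$ cancels. The crucial observation is that the dependence of this ratio on $(\X,\Hbf)$ enters only through the scalars $\T^T(\X,\Hbf)(\etabf_y-\etabf_{y_0})$, that is, only through the projection of $\T(\X,\Hbf)$ onto $\spn\{\etabf_y-\etabf_{y_0}:y\in\mathcal{Y}\}$. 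By the equivalence of (i) and (ii) recorded in the introduction, a statistic that is sufficient for the ``parameter'' $Y$ is exactly a sufficient reduction for the forward regression, so $\R(\X,\Hbf)=\alphabf_{\mathbf{a}}^T\T(\X,\Hbf)$ is sufficient whenever the columns of $\alphabf_{\mathbf{a}}$ span that subspace. Centering $\T$ by $\E(\T(\X,\Hbf))$ does not alter the span and only normalizes the reduction to have mean zero; likewise, replacing $\{\etabf_y-\etabf_{y_0}\}$ by the centered set $\{\etabf_Y-\E(\etabf_Y)\}$ leaves the span unchanged, which yields the stated description of $\alphabf_{\mathbf{a}}$.

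For minimality I would argue that no proper subspace of $\spn\{\etabf_Y-\E(\etabf_Y)\}$ can yield a sufficient reduction. If $\spn(\alphabf_{\mathbf{a}})$ failed to contain some $\etabf_y-\etabf_{y_0}$, then the likelihood ratio above would retain genuine dependence on the component of $\T(\X,\Hbf)$ along the missing direction, contradicting sufficiency; here one uses that \eqref{genExpFam} is of full rank, so that no nontrivial linear combination of the coordinates of $\T(\X,\Hbf)$ is almost surely constant and distinct directions in parameter space remain statistically distinguishable. This is precisely the minimality clause in the cited theorem, which I would quote to close the argument.

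The step I expect to be the main obstacle is the minimality and full-rank verification rather than the sufficiency half: one must confirm that the particular $\T(\X,\Hbf)$ in \eqref{suffstat}, assembled from $\X$, $\Hbf$, $\vech(\X\X^T)$, $\vecc(\X\Hbf^T)$ and $\J_q\vech(\Hbf\Hbf^T)$, carries no built-in affine dependencies that would shrink the effective parameter space, so that the linear maps encoded by $\D_p$, $\J_q$ and $\F_y$ do not collapse directions and $\spn\{\etabf_Y-\E(\etabf_Y)\}$ is genuinely the minimal reducing subspace. Granting the exponential-family theorem of \cite{BuraDuarteForzani2016}, this reduces to checking the regularity already implicit in the model's parameterization.
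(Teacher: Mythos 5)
Your proposal is correct and takes essentially the same route as the paper: both rest on invoking Theorem 1 of \cite{BuraDuarteForzani2016} applied to the exponential-family representation \eqref{genExpFam} with sufficient statistic \eqref{suffstat} and natural parameter \eqref{naturalpar}, the paper simply citing that result where you additionally sketch its internals (the likelihood-ratio factorization for sufficiency, the span-invariance under centering, and full-rank minimality). The only content the paper's proof has beyond yours is the explicit computation of $\etabf_y-\E(\etabf_Y)$ using $\E(\f_Y)=\0$, which identifies the concrete basis matrix $\mathbf{a}$; that computation is not needed for the theorem as stated but feeds the subsequent corollaries.
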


We provide the proof of Theorem \ref{propo1} in Appendix \ref{proofpropo1}, where we see that the reduction in \eqref{reduction1} is characterized by the coefficients of the basis  for $\spn\{\etabf_Y - \E({\etabf_Y}), Y\in \mathcal{Y}\} = \spn (\abf)$  with
\[ 
\mathbf{a}= \left( \begin{array}{c} \Deltabf^{-1} \A\\
\Lbf_q \taubf - \betabf^T \Deltabf^{-1} \A\\
0\\
0\\
\J_q \taubf \end{array}\right)=\left( \begin{array}{c} \unvecc (\varthetabf_{1,1})\\
\unvecc (\varthetabf_{2,1})\\
0\\
0\\
\unvecc (\varthetabf_{5,1}) \end{array}\right).
\]
Since $\etabf_{3}\doteq \etabf_{y3}$ and $\textcolor{red}{\etabf_4}\doteq \etabf_{y4}$ do not depend on $y$, Corollary \ref{optimalSDR} follows. 

\begin{corollary}\label{optimalSDR} 
Suppose the density of $(\X,\Hbf)|Y$ is given by \eqref{genExpFam}. A minimal sufficient dimension reduction for the regression of $Y$ on $(\X, \Hbf)$ is given by 
\begin{align}
\R(\X,\Hbf)=\alphabf_{\bb}^T\left({\mathbf t}\left(\X,\Hbf\right)-\E\left({\mathbf t}(\X,\Hbf)\right)\right), \label{optSDR}
\end{align}
where 
\begin{eqnarray}\label{pame}
    {\mathbf t}(\X,\Hbf)&=&\left(\X^T, \Hbf^T, \left[\J_q \vech (\Hbf \Hbf^T)\right]^T\right)^T, \label{tXH}
\end{eqnarray}
and $\alphabf_{\bb}$ is a basis for $\mathcal{S}_{\alphabf_{\tiny \bb}}=\spn\{\bb\}$ 
with
\begin{eqnarray}\label{bb}
\bb &= & \left( \begin{array}{c} \Deltabf^{-1} \A\\
\Lbf_q \taubf - \betabf^T \Deltabf^{-1} \A\\
\J_q \taubf \end{array}\right) =
  \left( \begin{array}{c} \unvecc (\varthetabf_{1,1}) \\
\unvecc (\varthetabf_{2,1})\\
\unvecc (\varthetabf_{5,1}) \end{array}\right). \label{b}    
\end{eqnarray}
\end{corollary}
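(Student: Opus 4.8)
The plan is to obtain the Corollary directly from Theorem~\ref{propo1} by discarding the two blocks of the sufficient statistic that carry no information about $Y$. First I would record the explicit basis vector $\abf$ produced in the proof of Theorem~\ref{propo1}: the minimal sufficient reduction \eqref{reduction1} is $\alphabf_{\mathbf{a}}^T\left(\T(\X,\Hbf)-\E(\T(\X,\Hbf))\right)$, where $\alphabf_{\mathbf{a}}$ spans $\spn\{\etabf_Y-\E(\etabf_Y)\}=\spn(\abf)$ and the third and fourth blocks of $\abf$ are zero. The crucial structural fact, which I would verify straight from \eqref{varetas}, is that $\varthetabf_3$ and $\varthetabf_4$ consist solely of the intercept pieces $\varthetabf_{3,0},\varthetabf_{4,0}$ with no $\f_y$-dependent slope; equivalently $\etabf_{y3}$ and $\etabf_{y4}$ in \eqref{naturalpar} are constant in $y$. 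This is precisely why the third and fourth blocks of $\abf$ vanish.

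Second, I would exploit that these zero blocks propagate to the entire span. Every column of $\abf$ has zeros in the third and fourth block coordinates; hence so does every linear combination, i.e.\ every vector of $\spn(\abf)$, and therefore every column of any basis $\alphabf_{\mathbf{a}}$. Writing $\T(\X,\Hbf)$ in its five blocks from \eqref{suffstat} and pairing block-by-block, the contributions of the third block $-\frac{1}{2}\D_p^T\D_p\vech(\X\X^T)$ and the fourth block $\vecc(\X\Hbf^T)$ to the inner product $\alphabf_{\mathbf{a}}^T\left(\T-\E(\T)\right)$ are identically zero. Consequently the reduction depends only on blocks $1,2,5$ of $\T$, which are exactly $\X$, $\Hbf$ and $\J_q\vech(\Hbf\Hbf^T)$, that is, the statistic $\mathbf{t}(\X,\Hbf)$ in \eqref{tXH}.

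Third, I would assemble the reduced form. After deleting the two zero blocks one has $\alphabf_{\mathbf{a}}^T\left(\T-\E(\T)\right)=\alphabf_{\bb}^T\left(\mathbf{t}-\E(\mathbf{t})\right)$, where $\alphabf_{\bb}$ is the submatrix of $\alphabf_{\mathbf{a}}$ obtained by keeping the rows indexed by blocks $1,2,5$; since $\abf$ restricted to these blocks is exactly $\bb$ in \eqref{b}, $\alphabf_{\bb}$ is a basis for $\spn(\bb)$. Finally I would confirm that minimality is retained: deleting rows that are identically zero across all columns cannot alter the linear-dependence relations among the columns, so $\rank(\bb)=\rank(\abf)$ and the reduced reduction has the same dimension $d$; being literally the same function of $(\X,\Hbf)$ as the minimal reduction of Theorem~\ref{propo1}, it inherits minimal sufficiency.

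There is no genuine obstacle here, as the result is an immediate corollary. If anything requires care, it is the purely clerical matching of the block partition of $\T$ in \eqref{suffstat} with the block partition of $\etabf_Y$ in \eqref{naturalpar}, and the explicit check against \eqref{varetas} that $\varthetabf_3,\varthetabf_4$ genuinely lack an $\f_y$-slope, so that the two discarded blocks are the correct ones.
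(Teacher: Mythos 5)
Your proposal is correct and follows exactly the paper's route: the paper derives this corollary from Theorem~\ref{propo1} by the single observation that $\etabf_{y3}$ and $\etabf_{y4}$ do not depend on $y$, so the third and fourth blocks of $\abf$ vanish and the corresponding blocks of $\T(\X,\Hbf)$ drop out, leaving $\mathbf{t}(\X,\Hbf)$ and $\bb$. Your write-up merely makes explicit the (straightforward) facts the paper leaves implicit — that zero blocks propagate to the whole span and that deleting identically zero rows preserves rank and hence minimality.
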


As the reduction in \eqref{optSDR} is not only sufficient but also \textit{minimal}, we call it {\bf optimal SDR} in the sequel.

\begin{corollary}\label{CoroPFC} When the predictor vector contains only continuous variables; that is, $q=0$ and $\Z=\X$, the sufficient dimension reduction is
\begin{align}
\R(\X)=\alphabf_1^T{\color{black}\left(\X-\E(\X)\right)}, \label{PFC}
\end{align}
where $  \spc_{\alphabf_1}=\spn (\alphabf_1) = \spn (\Deltabf^{-1} \A)$, and  $\A: p \times r$ in \eqref{modelbase}.
\end{corollary}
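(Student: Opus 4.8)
The plan is to obtain this corollary as a direct specialization of Corollary \ref{optimalSDR} to the case $q=0$, tracking how every $\Hbf$-dependent object collapses. When there are no binary predictors, the factorization \eqref{cdf.fact} degenerates to $F(\X\mid Y)$ alone, so model \eqref{modelbase} reduces to $\X\mid Y\sim N(\muxbar+\A\f_y,\Deltabf)$, with the term $\betabf(\Hbf-\muhbar)$ and the entire Ising factor in \eqref{jointlik} absent. First I would record the dimension bookkeeping at $q=0$: here $k_q=q(q-1)/2=0$ and $m_q=q(q+1)/2=0$, so the blocks of $\mathbf{t}(\X,\Hbf)$ in \eqref{tXH} indexed by $\Hbf$ and by $\J_q\vech(\Hbf\Hbf^T)$ are empty, and $\mathbf{t}$ collapses to its leading block $\mathbf{t}(\X)=\X$.

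Next I would specialize the basis $\bb$ in \eqref{bb}. Its second and third blocks, $\Lbf_q\taubf-\betabf^T\Deltabf^{-1}\A$ and $\J_q\taubf$, involve the $q$-indexed selection matrices $\Lbf_q,\J_q$ together with the parameters $\betabf,\taubf$ tied to the binary variables, all of which vanish when $q=0$. Therefore $\bb$ reduces to its first block $\Deltabf^{-1}\A$, and $\alphabf_{\bb}$ becomes a basis $\alphabf_1$ for $\spn(\Deltabf^{-1}\A)$. Substituting into \eqref{optSDR} yields exactly \eqref{PFC}, which is the assertion of the corollary.

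The only real obstacle is to make this collapse rigorous at the level of the exponential-family derivation rather than by merely deleting rows. I would therefore verify independently that, after setting $q=0$, the density \eqref{genExpFam} still belongs to the natural exponential family with sufficient statistic $\T(\X)=(\X,\,-\tfrac{1}{2}\D_p^T\D_p\vech(\X\X^T))$ and natural parameter whose only $y$-varying coordinate is $\etabf_{y1}=\Deltabf^{-1}\muxbar+\Deltabf^{-1}\A\f_y$ (recall $\f_y$ is centered), while $\etabf_{y3}=\vech(\Deltabf^{-1})$ is constant in $y$; this is read directly off \eqref{varetas} once the $\betabf$ and $\taubf$ terms are dropped. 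Since $\etabf_{y1}-\E(\etabf_{y1})=\Deltabf^{-1}\A\f_y$ spans $\spn(\Deltabf^{-1}\A)$, Theorem \ref{propo1} reproduces the same reduction and confirms its minimality, so the two routes agree. This recovers the principal fitted components reduction of \cite{Cook2007}, the expected sanity check for the purely continuous case.
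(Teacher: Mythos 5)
Your proposal is correct and takes essentially the same route as the paper, whose entire proof is that the corollary follows from Corollary \ref{optimalSDR} because $\varthetabf_{2,1}=0$ and $\varthetabf_{5,1}=0$ when $q=0$ --- i.e., exactly your observation that the second and third blocks of $\bb$ in \eqref{bb} disappear, leaving $\spn(\Deltabf^{-1}\A)$. Your additional re-derivation through Theorem \ref{propo1} is a harmless redundancy, since Corollary \ref{optimalSDR} is itself a consequence of that theorem.
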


The reduction \eqref{PFC} coincides with {\it {Principal Fitted Components}} (PFC) in \cite{CookForzani2008}.

\begin{corollary}\label{binonly}  When the predictor vector contains only binary variables; that is, $p=0$ and $\Z=\Hbf$,   the sufficient dimension reduction is
\begin{align}\label{binSR}
\R(\X)&=\alphabf_2^T ({\mathbf s}( \Hbf) - \E ({\mathbf s}( \Hbf))),
\end{align}
where
\begin{equation}\label{nonec}
{\mathbf s}( \Hbf) =  \left(\Hbf^T, \left[\J_q  \vech  (\Hbf \Hbf^T)\right]^T\right)^T,
\end{equation}
and 
\begin{align}
  \spc_{\alphabfs_2}&=\spn(\alphabf_2) = \spn  \left( \begin{array}{c} 
\Lbf_q \taubf \\
\J_q \taubf \end{array}\right). \label{a_2}  
\end{align}
\end{corollary}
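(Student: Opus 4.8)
The plan is to obtain Corollary \ref{binonly} as the degenerate case $p=0$ of the general development, so that the joint density \eqref{jointlik} collapses to the pure Ising model \eqref{ising-vech2}. When there are no continuous predictors, the blocks $\A$, $\betabf$ and $\Deltabf$ are void, the Gaussian factor in \eqref{jointlik} disappears, and $(\X,\Hbf)\mid Y$ reduces to $\Hbf\mid Y$ with density $P(\Hbf\mid Y=y)=G(\Gammabf_y)^{-1}\exp\{\vech^T(\Hbf\Hbf^T)(\taubf_0+\taubf\f_y)\}$. I would then exhibit this as a natural exponential family of the form \eqref{genExpFam}, read off its sufficient statistic and natural parameter, and invoke Theorem \ref{propo1}.

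The key algebraic step is to split $\vech(\Hbf\Hbf^T)$ into its diagonal and strictly-lower-triangular parts via the selection matrices $\Lbf_q$ and $\J_q$. Because $\Lbf_q$ and $\J_q$ pick out complementary sets of the $q(q+1)/2$ coordinates of a $\vech$ vector, they satisfy $\Lbf_q^T\Lbf_q+\J_q^T\J_q=\I_{q(q+1)/2}$, whence $\vech(\Hbf\Hbf^T)=\Lbf_q^T\Lbf_q\vech(\Hbf\Hbf^T)+\J_q^T\J_q\vech(\Hbf\Hbf^T)$. Using $H_j^2=H_j$ for binary $H_j$, the diagonal selection gives $\Lbf_q\vech(\Hbf\Hbf^T)=\Hbf$, while $\w\doteq\J_q\vech(\Hbf\Hbf^T)$ collects the products $H_iH_j$, $i>j$. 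Substituting $\vech(\Hbf\Hbf^T)=\Lbf_q^T\Hbf+\J_q^T\w$ into the exponent yields $\vech^T(\Hbf\Hbf^T)(\taubf_0+\taubf\f_y)=\Hbf^T\Lbf_q(\taubf_0+\taubf\f_y)+\w^T\J_q(\taubf_0+\taubf\f_y)$, so that the density has the exponential-family form \eqref{genExpFam} with $h(\Hbf)=1$, sufficient statistic ${\mathbf s}(\Hbf)=(\Hbf^T,\w^T)^T$ as in \eqref{nonec}, natural parameter $\etabf_y=\big((\Lbf_q(\taubf_0+\taubf\f_y))^T,(\J_q(\taubf_0+\taubf\f_y))^T\big)^T$, and $\psi(\etabf_y)=\log G(\Gammabf_y)$.

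Because $\etabf_y$ is affine in $\f_y$, I can write $\etabf_y=\etabf_0+\B\f_y$ with $\B=(\,(\Lbf_q\taubf)^T,(\J_q\taubf)^T\,)^T$, so that $\etabf_Y-\E(\etabf_Y)=\B(\f_Y-\E(\f_Y))$ and hence $\spn\{\etabf_Y-\E(\etabf_Y):Y\in\mathcal Y\}=\spn(\B)=\spn\big((\Lbf_q\taubf)^T,(\J_q\taubf)^T\big)^T$, provided $\f_Y-\E(\f_Y)$ spans $\real^r$ (the usual non-degeneracy of the fitting functions). Theorem \ref{propo1} then delivers the minimal sufficient reduction $\R(\Hbf)=\alphabf_2^T({\mathbf s}(\Hbf)-\E({\mathbf s}(\Hbf)))$ with $\alphabf_2$ a basis for that span, which is exactly \eqref{binSR}--\eqref{a_2}. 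Equivalently, one can read the conclusion directly off Corollary \ref{optimalSDR} by setting $p=0$: the $\X$-block of ${\mathbf t}(\X,\Hbf)$ vanishes leaving ${\mathbf s}(\Hbf)$, and in $\bb$ the first block drops out while the cross term $\betabf^T\Deltabf^{-1}\A$ disappears, reducing $\bb$ to $(\Lbf_q\taubf,\J_q\taubf)^T$. The only genuine point to check, and thus the main obstacle, is the decomposition identity together with the verification that ${\mathbf s}(\Hbf)$ furnishes a minimal representation of the family, i.e.\ that its components are not affinely dependent, so that Theorem \ref{propo1} applies and the resulting reduction is truly minimal.
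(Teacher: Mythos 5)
Your proposal is correct, but your primary route differs from the paper's proof, which is a one-line specialization: the paper simply invokes Corollary \ref{optimalSDR} with $p=0$, noting that $\varthetabf_{1,1}$ vanishes and $\varthetabf_{2,1}=\vecc(\Lbf_q\taubf)$ because the cross term $\betabf^T\Deltabf^{-1}\A$ disappears, so that $\bb$ collapses to $(\Lbf_q\taubf;\,\J_q\taubf)$ and ${\mathbf t}(\X,\Hbf)$ collapses to ${\mathbf s}(\Hbf)$ --- i.e., exactly the ``equivalently'' remark you relegate to the end. Your main argument instead re-derives everything from scratch: you write the Ising density \eqref{ising-vech2} in natural exponential family form using the decomposition $\vech(\Hbf\Hbf^T)=\Lbf_q^T\Hbf+\J_q^T\J_q\vech(\Hbf\Hbf^T)$ (valid for binary coordinates since $H_j^2=H_j$), identify the natural parameter $\etabf_y=\bigl((\Lbf_q(\taubf_0+\taubf\f_y))^T,(\J_q(\taubf_0+\taubf\f_y))^T\bigr)^T$, and then apply the exponential-family reduction theorem underlying Theorem \ref{propo1}. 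This is essentially the Appendix A computation specialized to $q$ binary predictors, and it is the very identity the paper uses implicitly there, so your algebra is sound. What your route buys is self-containedness: it avoids having to interpret formally void objects such as $\Deltabf^{-1}\A$ and $\betabf$ when $p=0$, which the paper's one-liner glosses over by treating empty blocks as zero. What it costs is length, plus two side conditions you correctly flag but (like the paper) do not fully verify: that the centered $\f_y$ values span $\real^r$ (needed for $\spn\{\etabf_Y-\E\etabf_Y\}=\spn(\B)$ rather than a proper subspace), and that the family is minimal, which here follows because the monomials $H_j$ and $H_iH_j$, $i>j$, together with the constant are linearly independent functions on $\{0,1\}^q$.
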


When the predictors are mixed, we derive a \textit{sufficient but not minimal} reduction in Corollary \eqref{suboptSDR}, which we call {\bf sub-optimal  SDR}. 

\begin{corollary} \label{suboptSDR} 
Suppose that $(\X,\Hbf)|Y$ has  density  (\ref{genExpFam}). A sufficient  reduction for the regression of $Y $ on $(\X, \Hbf)$ is given by
\begin{align}
\R(\X,\Hbf)&=\alphabf_{\mathbf{c}}^T \left( \w(\X,\Hbf)-\E( \w (\X,\Hbf))\right), \label{suboptSR}
\end{align}
with 
\begin{align}
\textcolor{black}{\w(\X,\Hbf) = (\X^T, \Hbf^T,     [\vech   (\Hbf \Hbf^T)]^T)^T}, \label{wXH}    
\end{align}

\begin{align}\label{Spanc} 
 \mathcal{S}_{\alphabf_{\cb}}&= \spn(\alphabf_{\cb})=
 \spn \left(\begin{array}{cc} \mathbf{c}_1 & \mathbf{0}\\
\mathbf{0} & \mathbf{c}_2
\end{array}  \right) =  \spn \left(\begin{array}{cc} \Deltabf^{-1}\A & 0\\
-\betabf^T \Deltabf^{-1} \A & 0  \\
0 &   \taubf
\end{array}  \right) .  
\end{align}
\end{corollary}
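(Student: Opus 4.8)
The plan is to deduce sufficiency of the sub-optimal reduction \eqref{suboptSR} from the \emph{minimality} of the optimal reduction of Corollary~\ref{optimalSDR}, rather than returning to the exponential-family characterization. The general principle I will invoke is: if $\R_{\mathrm{opt}}$ is a minimal sufficient reduction and $\R_{\mathrm{opt}}$ is a measurable function of some reduction $\R$, then $\R$ is itself sufficient. Indeed, $\R_{\mathrm{opt}}=g(\R)$ forces $\sigma(\R_{\mathrm{opt}}(\Z))\subseteq\sigma(\R(\Z))$, so the conditional law of $Y\mid\Z$, which equals that of $Y\mid\R_{\mathrm{opt}}(\Z)$ and is therefore $\sigma(\R(\Z))$-measurable, is left unchanged upon further conditioning on $\R(\Z)$ by the tower property; hence $Y\mid\Z\,{\buildrel d \over =}\,Y\mid\R(\Z)$. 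It thus suffices to exhibit a fixed linear map carrying the sub-optimal reduction $\alphabf_{\cb}^T(\w-\E(\w))$ of \eqref{suboptSR} onto the optimal reduction $\alphabf_{\bb}^T(\mathbf{t}-\E(\mathbf{t}))$ of \eqref{optSDR}.

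The one nontrivial ingredient is the bookkeeping relating $\vech(\Hbf\Hbf^T)$ appearing in $\w$ of \eqref{wXH} to the off-diagonal statistic $\J_q\vech(\Hbf\Hbf^T)$ appearing in $\mathbf{t}$ of \eqref{tXH}. Because the $H_j$ are binary, $H_j^2=H_j$, so the diagonal of $\Hbf\Hbf^T$ equals $\Hbf$ and $\Lbf_q\vech(\Hbf\Hbf^T)=\Hbf$. Combined with the coordinate split $\Lbf_q^T\Lbf_q+\J_q^T\J_q=\I_{m_q}$ (the matrices $\Lbf_q$ and $\J_q$ select the complementary diagonal and off-diagonal positions of a $\vech$), this yields
\begin{equation*}
\vech(\Hbf\Hbf^T)-\E(\vech(\Hbf\Hbf^T)) = \Lbf_q^T\big(\Hbf-\E(\Hbf)\big) + \J_q^T\big(\J_q\vech(\Hbf\Hbf^T)-\E(\J_q\vech(\Hbf\Hbf^T))\big).
\end{equation*}
Contracting with $\taubf^T$ then decomposes the $\taubf$-block of the sub-optimal reduction as $\taubf^T(\vech(\Hbf\Hbf^T)-\E(\cdot))=(\Lbf_q\taubf)^T(\Hbf-\E(\Hbf))+(\J_q\taubf)^T(\J_q\vech(\Hbf\Hbf^T)-\E(\cdot))$.

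With this in hand I would read off the two column-blocks of $\alphabf_{\cb}$ in \eqref{Spanc}. The first block (with $\Deltabf^{-1}\A$ in the $\X$-rows and $-\betabf^T\Deltabf^{-1}\A$ in the $\Hbf$-rows) contracts the $\X$- and $\Hbf$-coordinates of $\w-\E(\w)$ to $\A^T\Deltabf^{-1}\big[(\X-\E(\X))-\betabf(\Hbf-\E(\Hbf))\big]$, while the second block $\taubf$ contracts the $\vech(\Hbf\Hbf^T)$-coordinates to the decomposition just displayed. Adding these two $r$-vectors column-by-column recovers exactly
\begin{equation*}
\A^T\Deltabf^{-1}(\X-\E(\X)) + (\Lbf_q\taubf-\betabf^T\Deltabf^{-1}\A)^T(\Hbf-\E(\Hbf)) + (\J_q\taubf)^T\big(\J_q\vech(\Hbf\Hbf^T)-\E(\cdot)\big),
\end{equation*}
which is precisely $\alphabf_{\bb}^T(\mathbf{t}-\E(\mathbf{t}))$ with the basis $\bb$ of \eqref{b}. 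Hence the fixed linear map $[\I_r\ \I_r]$ that sums the two blocks sends $\R_{\mathrm{sub}}$ onto the minimal $\R_{\mathrm{opt}}$ (the choice of basis $\alphabf_{\cb}$ for $\spn(\cb)$ only amounts to an invertible reparametrization and does not affect the $\sigma$-field), so $\R_{\mathrm{opt}}$ is a function of $\R_{\mathrm{sub}}$ and sufficiency follows from the principle of the first paragraph.

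The main obstacle is conceptual rather than computational: making the ``finer-implies-sufficient'' principle rigorous (the tower-property argument) and verifying that the centering by $\E(\w)$ versus $\E(\mathbf{t})$ is consistent under the diagonal/off-diagonal split. Both are dispatched once the binary identity $\Lbf_q\vech(\Hbf\Hbf^T)=\Hbf$ and the idempotent split $\Lbf_q^T\Lbf_q+\J_q^T\J_q=\I_{m_q}$ are established; everything else is routine given Corollary~\ref{optimalSDR}. I would close by noting why the reduction is \emph{not} minimal: $\R_{\mathrm{sub}}$ retains the two blocks separately, yielding a reduction of dimension $2r$, whereas the minimal $\R_{\mathrm{opt}}$ keeps only their sum, of dimension $r$; thus $\R_{\mathrm{sub}}$ carries strictly more coordinates than necessary, confirming the ``sub-optimal'' terminology.
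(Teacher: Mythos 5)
Your proposal is correct and is essentially the paper's own proof read in transpose: the paper's one-line factorization $\bb = \widetilde{\bb}\left(\begin{smallmatrix}\I_r\\ \I_r\end{smallmatrix}\right)$, with $\spn(\widetilde{\bb})$ identified with $\spn(\alphabf_{\cb})$, is exactly your statement that the fixed map $[\,\I_r\ \ \I_r\,]$ carries the sub-optimal reduction onto the optimal one, after which sufficiency follows because the minimal sufficient reduction is then a function of the sub-optimal one. If anything, your write-up is more complete than the paper's: the paper never states the binary identity $\Lbf_q\vech(\Hbf\Hbf^T)=\Hbf$ or the complementary-selector split $\Lbf_q^T\Lbf_q+\J_q^T\J_q=\I_{q(q+1)/2}$ that are needed to identify the coordinates of ${\mathbf t}(\X,\Hbf)$ with those of $\w(\X,\Hbf)$ (as literally written, the paper's claim $\spn(\widetilde{\bb})=\spn(\alphabf_{\cb})$ is even a row-dimension mismatch), and you supply precisely that missing identification.
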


If  $\rank(\cb_1) =d_1 \leq  \min \{r,p\}$ and $\rank(\cb_2) =d_2\leq \min\{r, q(q+1)/2\}$, then 
\begin{align}
{\cb}_1=\left(\begin{array}{c} {\Deltabf}^{-1}{\A}  \\
-\betabf^T \Deltabf^{-1} \A   
\end{array}  \right) = \left(\begin{array}{c} {\alphabf \xibf}  \\
-\betabf^T  \alphabf \xibf   
\end{array}  \right), \;  \cb_2  =  
   \kappabf \iotabf,  \label{c1defn}
\end{align} 
where $\A=\alphabf\xibf$, $\alphabf \in \real^{p\times d_1}$, $\xibf \in \real^{d_1\times r}$, $\kappabf \in \real^{q(q+1)/2 \times d_2}$ and $\iotabf \in \real^{d_2 \times r}$ are full rank matrices. Therefore,
\begin{align}
\spn(\cb_1)&=\spn\{(\alphabf^T, -\alphabf^T\betabf)^T\}, \label{Spanc1} \\ \spn(\cb_2)&=\spn ( \kappabf ). \label{Spanc2}
\end{align}
  
In Table~\ref{Tab:SuffReds}, we summarize the results of this Section and tabulate the sufficient reductions for mixed normal and binary predictors. 

\begin{table}
\caption{\label{Tab:SuffReds}Sufficient Reductions  in Regressions with Mixed Predictors.}
\centering
\fbox{%
{\scriptsize
\begin{tabular}{llr} \\ 
& \multicolumn{2}{c}{Sufficient Reductions} \\
Predictor Distribution    & Optimal SDR & Sub-optimal \\
\midrule
 \multirow{3}{*}{$(\X,\Hbf)\mid Y$ with density \eqref{genExpFam}}    & $\alphabf_{\bb}^T ({\mathbf t}(\X,\Hbf)-\E({\mathbf t}(\X,\Hbf)))$
 & \;\;\;$\alphabf_{\mathbf{c}}^T ({\mathbf w}(\X,\Hbf)-\E({\mathbf w}(\X,\Hbf)))$\\
 & ${\mathbf t}(\X,\Hbf)$ in \eqref{tXH}, &  \;\;\; ${\mathbf w}(\X,\Hbf)$ in \eqref{wXH},  \\
 & $\mathcal{S}_{\alphabf_{\tiny \bb}}=\spn\{\bb \}$, $\bb$ in \eqref{b} & \;\;\;$\mathcal{S}_{\alphabf_{\cb}}$ in \eqref{Spanc}, \eqref{Spanc1}, and \eqref{Spanc2} \\
 \hline
\multirow{2}{*}{$\X\mid Y \sim  N\left(\muxbar + \A (\f_Y-\bar{\f}_Y), \Deltabf\right)$}  & $\alphabf_1^T (\X- \E (\X))$     &    \\
& $\spc_{\alphabf_1} = \spn (\Deltabf^{-1} \A)$ & \\ 
\hline
\multirow{2}{*}{ $\Hbf\mid Y$ with mass function \eqref{ising-vech2}}     & $\alphabf_2^T  ({\mathbf s}(\Hbf)- \E({\mathbf s}(\Hbf))) $&\\
&  ${\mathbf s}(\Hbf)$ in \eqref{nonec} &\\
&  $\spc_{\alphabf_2}$ in \eqref{a_2} & \\
\end{tabular}
}
}
\end{table}

\section{Reduction Estimators and their Asymptotic Distribution}\label{estimation}

In this section we derive maximum likelihood estimators for our optimal and sub-optimal sufficient reductions, the asymptotic normality of the projection matrix of the optimal SDR, with which we also obtain asymptotic tests of dimension of both optimal and sub-optimal reductions.  

\subsection{Parameter Estimation via Maximum Likelihood }\label{MLEs}

We assume a random sample $(y_i,\x_i,\hbf_i)$, $i=1,\ldots,n$, is drawn from the joint distribution of $(Y,\X,\Hbf)$ and that the conditional distribution models (\ref{ising-vech1}) and (\ref{modelbase}) hold.
Finding the maximum likelihood estimators of the reductions derived in Section~\ref{SufRed} requires {first} the estimation of the parameters $\Deltabf, \mubf, \mubf_{\Hbfs}, \A, \betabf, \taubf_0, \taubf$, in the joint density \eqref{jointlik} with log-likelihood  
\begin{equation}\label{max1}
 \sum_{i=1}^n \log f_{\X,\Hbf} (\x_i,\hbf_i|y_i; \paramall).
\end{equation}
We maximize (\ref{max1}) 
in two steps. First, we maximize 
$\sum_{i=1}^n \log f_{\X} (\x_i|y_i,\hbf_i; \Omegabf)$
to estimate the parameters $\Omegabf = \{\Deltabf, \mubf,\mubf_{\Hbfs}, \A, \betabf \}$. 
Since $\X\mid(\Hbf,Y)$ follows a normal distribution, the maximum likelihood estimator (MLE) of $\Omegabf$ is obtained from fitting a multivariate normal linear model of $\X$ on the centered $(\Hbf,Y)$ via MLE. The MLE  of $\A$ and $\betabf$ are $(\widehat{\A}, \widehat{\betabf})=\mathbb{X}^T\mathbb{L}\left(\mathbb{L}^T\mathbb{L}\right)^{-1}$, where $\Xbb$ denotes the $n\times p$ matrix with rows $(\x_i -\bar{\x})^T$, and $\mathbb{L}$  the $n \times (r+q)$ matrix with rows $\left((\f_{y_i} -\bar{\f}_y)^T, (\hbf_i-\bar{\hbf})^T\right)$,  $\bar{\x}=\sum_{i=1}^n \x_{i}/n$, $\bar{\f}_y=\sum_{i=1}^n f_{y_i}/n$ and $\bar{\hbf}=\sum_{i=1}^n \hbf_{i}/n$. The MLE of the covariance matrix is $\widehat{\Deltabf}= \left(\Xbb^T - (\widehat{\A}, \widehat{\betabf})\mathbb{L}^T\right) \left(\Xbb^T - (\widehat{\A}, \widehat{\betabf})\mathbb{L}^T\right)^T/n$.

Next, we estimate $\Upsilonbf  = (\taubf_0,\taubf)$  maximizing the conditional log-likelihood function
\begin{equation}\notag
 \sum_{i=1}^n  \log f_{\Hbf} (\hbf_i \mid y_i;\Upsilonbf).
 \end{equation}
Using  parametrization (\ref{gammadey}), the joint probability mass function \eqref{ising-vech1} can be written as 
\begin{align*}
 &P(\Hbf|Y=y)=\exp \bigg( \sum_{j=1}^q \tau^{*}_{jj0}H_j + \sum_{j=1}^q \taubf^T_{jj}\f_y H_j \\
&\hspace{3cm} + \sum_{1\leq j < j' \leq q} \tau^{*}_{jj'0}H_jH_{j'} + \sum_{1\leq j < j' \leq q} \taubf^T_{jj'}\f_y H_jH_{j'}\bigg)\frac{1}{G(\Gammabf_y)}.
\end{align*}
Following \cite{Chengetal14}, we consider a single binary variable $H_j$ and condition over the rest $\Hbf_{-j}= (H_1,\dots, H_{j-1},H_{j+1},\dots,H_q)$ to obtain 
\begin{equation}\label{condlogit}
\log \frac{P(H_j=1\mid \Hbf_{-j},Y)}{P(H_j=0 \mid \Hbf_{-j},Y)}=\tau^{*}_{jj0}+ \taubf^T_{jj}\f_y + \sum_{j\neq j'}\tau^{*}_{jj'0}H_{j'}+\sum_{j< j'}\taubf^T_{jj'}\f_y H_{j'}.
\end{equation}
Thus, the conditional log-odds for a specific binary variable $H_j$ is linear in the parameters. Moreover, the conditional maximum likelihood estimators for these parameters can be obtained by fitting a logistic regression of $H_j$ on \textcolor{black}{$(\f_y,\Hbf_{-j}, \f_y \Hbf_{-j})$}, so that we obtain estimators for  $\taubf_0$ and $\taubf$ by fitting $q$ univariate logistic regressions. In particular, for the sample points $(\hbf_i^T,y_i) = (h_{i1},\ldots, h_{iq},y_i)$, for each binary variable $j$ ($j=1,\ldots,q$), the conditional log-likelihood function is
\begin{equation}\label{verologit}
\ell_j(\taubf_0,\taubf; \hbf_i,y_i)=\frac{1}{n}\sum_{i=1}^{n} \log P(h_{ij}\mid \hbf_{i,-j},y_i)=\frac{1}{n}\sum_{i=1}^{n} \left(h_{ij}\epsilon_{ij} - \log(1+\exp(\epsilon_{ij}))\right),
\end{equation}
where $\hbf_{i,-j}=(h_{11},\ldots, h_{i,j-1}, h_{i, j+1}, \ldots, h_{iq})$ and 
\[
\epsilon_{ij}=\log \frac{P(h_{ij}=1 \mid \hbf_{i,-j},y)}{P(h_{ij}=0 \mid \hbf_{i,-j},y_i)}=\tau^{*}_{jj0}+ \taubf^T_{jj}\f_{y_i} + \sum_{j\neq j'}\tau^{*}_{jj'0}h_{ij'}+\sum_{j \neq j'}\taubf^T_{jj'}\f_{y_i} h_{ij'}.
\]
To estimate $\Upsilonbf$ we use the joint estimation algorithm proposed by \cite{Chengetal14} that maximizes $\sum_{j} \ell_j(\taubf_0,\taubf; \hbf_i,y_i)$.

\subsection{Maximum Likelihood Estimation of the Reductions}

To estimate the {\bf optimal SDR} 
$\alphabf_{\bb}$ in Corollary \ref{optimalSDR} and the {\bf sub-optimal SDR}  $\alphabf_{\cb}$ in Corollary \ref{suboptSDR}, we need first to estimate $\bb$ in (\ref{bb}) and $ \mathbf{c}_1$ and $\mathbf{c}_2$ in (\ref{Spanc}). We use the ML estimators  $(\widehat{\Deltabf}, \widehat{\mubf},  \widehat{\mubf}_{\Hbfs}, \widehat{\A}, \widehat{\betabf}, \widehat{\taubf}_0, \widehat{\taubf})$ of the corresponding parameters in \eqref{max1} from Section \ref{MLEs}.


\subsubsection{Optimal SDR}\label{redmin}

To estimate the minimal sufficient reduction in \eqref{optSDR}, or equivalently, derive a basis estimate of  $\spc_{\alphabfs_{\bbs}}$, we need to first estimate $\bb $ in \eqref{bb}. If $d=\dim(\mathcal{S}_{ \alphabf_{\tiny \bb}})$, with $d\leq \min\{r,p+q(q+1)/2\}$, the rank of $\bb$ is also $d$ with singular value decomposition 
\begin{equation}\label{nec}
{\bb}=\Uu^{T}\left(\begin{array}{cc} {\mathbf{K}} & \mathbf{0}\\
\mathbf{0} & \mathbf{0} \end{array}\right)\R,
\end{equation}
where $k_1 \geq \ldots \geq  k_d > 0$ are the singular values of $\bb$, $\K=\diag(k_1, \ldots,k_d)$,  $\Uu^{T}= (\Uu_{1},
\Uu_{0})$  is a $m \times m$ orthogonal matrix with $m=p+q(q+1)/2$, $\Uu_1: m\times d$, $\Uu_0:
m\times (m-d)$, and $\R^T=(\R_{1},
\R_{0}) $ is an  $r \times r$ orthogonal matrix with $\R_1: r\times d$, $\R_0:
r\times (r-d)$. The submatrices satisfy $\Uu_{1}\Uu_{1}^T+ \Uu_{0}\Uu_{0}^T= \I_{m}$,
$\Uu_{1}^T\Uu_{1}= \I_d$, $\Uu_{0}^T\Uu_{0}=\I_{m-d}$,
$\Uu_{0}^T\Uu_{1}=\mathbf{0}$, $\R_{1}\R_{1}^T+
\R_{0}\R_{0}^T= \I_{r}$, $\R_{1}^T\R_{1}= \I_d$,
$\R_{0}^T\R_{0}=\I_{r-d}$, $\R_{0}^T\R_{1}=\mathbf{0}$.
Then,
\begin{equation}\label{bbb}
\bb = \Uu_1  \mathbf{K} \R^{T}_1,
\end{equation}
and, as a consequence, $\alphabf_{\bb}$ in Corollary \ref{optimalSDR} can be set to $\Uu_1$. Plugging in  the ML estimators $(\widehat{\Deltabf}, \widehat{\A}, \widehat{\betabf}, \widehat{\taubf})$ we  obtain that the ML estimator of $\bb$ is 
\begin{align}\label{mleb}
\widehat{\bb} &= \begin{pmatrix} \widehat{\Deltabf}^{-1} \widehat{\A}\\
\Lbf_q \widehat{\taubf} - \widehat{\betabf}^T \widehat{\Deltabf}^{-1} \widehat{\A} \\
\J_q \widehat{\taubf} \end{pmatrix} =
  \begin{pmatrix} \unvecc (\widehat{\varthetabf}_{1,1}) \\
\unvecc (\widehat{\varthetabf}_{2,1})\\
\unvecc (\widehat{\varthetabf}_{5,1}) \end{pmatrix}.     
\end{align}
The singular value decomposition of the MLE of $\bb$ is
\begin{equation}\label{bbhat}
\widehat{\bb}=\widehat{\Uu}^T\left(\begin{array}{cc} \widehat{\mathbf{K}}_1 & \mathbf{0}\\
\mathbf{0} & \widehat{\mathbf{K}}_0 \end{array}\right)\widehat{\mathbf{R}},
\end{equation}
where  $\widehat{\mathbf{K}}_1 = \text{diag}(\widehat{k}_1,\ldots, \widehat{k}_{d})$,
$\widehat{\mathbf{K}}_0=\diag(\widehat{k}_{d+1},\ldots, \widehat{k}_{\min(m,r)})$, 
$\widehat{k}_i$ are the singular values of $\widehat{\bb}$ in decreasing order, $\widehat{\Uu}$ is an $m\times m$ orthogonal matrix whose columns  are  the left singular vectors of $\widehat{\bb}$, and  $\widehat{\mathbf{R}}$ is an $r\times r$ orthogonal matrix, whose columns are the right-singular vectors of $\widehat{\bb}$. Let $\widehat{\Uu}_1$ be the first $d$ columns of $\widehat{\Uu}$,  $\widehat{\mathbf{R}}_1$ the first $d$ columns of $\widehat{\mathbf{R}}^T$, and  $\widehat{\B}=\widehat{\K}_1\widehat{\R}_1^T$. An estimator of $\bb$ subject to $d=\dim(\mathcal{S}_{ \alphabf_{\tiny \bb}})$ is
\begin{equation}\label{bestimator}
\widehat{\bb}^{(d)}=
\widehat{\Uu}_1\widehat{\K}_1\widehat{\R}_1^T =\widehat{\Uu}_1\widehat{\B}.
\end{equation}
and an estimator of the reduction $\alphabf_{\bb}$ in Corollary \ref{optimalSDR} is
\begin{equation}\label{bound}
\widehat{\alphabf}_{\bb}= \widehat{\mathbf{U}}_1.
\end{equation}

\subsubsection{Sub-optimal SDR: $\widehat{\mathcal{S}}_{\mathbf{c}}$}\label{sub}

To obtain an estimator for the space  $\mathcal{S}_{\alphabf_{\cb}}$ in \eqref{Spanc} that gives the sub-optimal sufficient reduction  \eqref{suboptSR},  we set $\mathbf{c} = (\cb_1,\cb_2)$, where $\cb_1$  and $\cb_2$ are given in \eqref{c1defn}, with $\hbox{rank}(\cb_1) =d_1$ and $\hbox{rank}(\cb_2) =d_2$. Plugging in the MLE  $(\widehat{\Deltabf}, \widehat{\mubf},  \widehat{\mubf}_{\Hbfs}, \widehat{\A}, \widehat{\betabf}, \widehat{\taubf}_0, \widehat{\taubf})$ of the corresponding parameters in \eqref{max1} from Section \ref{MLEs}, we obtain estimators of $\cb_1$ and $\cb_2$,
\[ \widehat{\cb}_1=\left(\begin{array}{c} \widehat{\Deltabf}^{-1}\Ahat  \\
-\widehat{\betabf}^T \widehat{\Deltabf}^{-1} \Ahat   
\end{array}  \right), \quad \widehat{\cb}_2 =  \widehat{\tau}.
\]
We then consider their respective 
SVD decompositions as in Section \ref{redmin}. 
Let $\widehat{\Uu}_{c_1}$ denote the first $d_1$ left eigenvectors of 
$\widehat{\cb}_1$ and $\widehat{\Uu}_{c_2}$ the first $d_2$ left eigenvectors of 
$\widehat{\cb}_2$. Then, an estimator for the {\bf sub-optimal} sufficient reduction in \eqref{suboptSR} is defined as
\[ \widehat{\alphabf}_{\cb}= \left(\begin{array}{cc}
\widehat{\Uu}_{\cb_1} & 0\\
0 & \widehat{\Uu}_{\cb_2}\end{array}\right).\]

\subsection{Asymptotic distribution of the optimal sufficient reduction estimator}\label{asympt.distn}

In this section we derive the asymptotic distribution of the projection onto the column space of the estimated optimal sufficient reduction $\widehat{\alphabf}_{\bb}$ in (\ref{bound}),  $\Pb_{\alphabfhat_{\mathbf b}}=\alphabfhat_{\mathbf b} (\alphabfhat_{\mathbf b}^T \alphabfhat_{\mathbf b})^{-1} \alphabfhat_{\mathbf b}^T$.  We use this result in the derivation of the asymptotic tests for dimension in Section \ref{testdimension} and for inference about the sufficient dimension reduction. 

\begin{proposition}\label{AsymptDistn} 
Suppose that $(\X,\Hbf)|Y$ has probability mass function  (\ref{genExpFam}) with the natural parameters  $\etabf_Y$ satisfying (\ref{naturalpar}) and that $\bb$ has rank $d$. Then, 
\[
\sqrt{n} \; \vecc\left(\Pb_{\alphabfhat_{\mathbf b}} -\Pb_{\alphabf_{\mathbf b}} \right) \xrightarrow{\mathcal{D}}  \mathcal{N} \left(\mathbf{0}, \V_{\alphabfhat_{\mathbf b}}\right),
\]
with 
\begin{equation}\label{VarAsintoticad}
\V_{\alphabfhat_{\bb}}=(\I_{m^2} \otimes {\mathbf{K}}_{mm}) \left( \bb^- \otimes  
\Q_{\bb}\right)^T   \V_{rcl} \left( \bb^- \otimes  
\Q_{\bb}\right) (\I_{m^2} \otimes {\mathbf{K}}_{mm}),
\end{equation}
where  $\bb^-$ is the Moore-Penrose generalized inverse of $\bb$, 
\begin{equation}\label{vrcl}
\V_{rcl}= \W \M \V \M^T \W^T,
\end{equation}
with 
\begin{equation}\label{Vinv}
\V^{-1} = \E \left( {\mathbf F}_y^T \mathbf{J}{\mathbf F}_y \right), 
\end{equation}
$\F_y$ is defined in (\ref{naturalpar}), $\mathbf{J}$ is the matrix of partial derivatives  given by 
\begin{equation}\label{deriva}
\mathbf{J} = \frac{\partial^2 \psi (\etabf_y)} {\partial \etabf_y \partial \etabf_y^T},
\end{equation} 
\begin{align}
\M&= 
\begin{pmatrix}
\mathbf 0_{pr\times p}&  \mathbf I_{pr} & \mathbf 0_{pr \times q} & \mathbf 0_{pr \times qr}  & \mathbf 0_{pr \times m_p}&\mathbf 0_{pr \times qp} &  \mathbf 0_{pr \times k_q}  & \mathbf 0_{pr \times rk_q}  \\
\mathbf 0_{qr\times p}&  \mathbf 0_{qr \times pr} & \mathbf 0_{qr \times q} & \mathbf I_{qr}  & \mathbf 0_{qr \times m_p} &\mathbf 0_{qr \times qp}  &  \mathbf 0_{qr \times k_q}  & \mathbf 0_{qr \times rk_q}   \\
\mathbf 0_{rk_q\times p} &  \mathbf 0_{rk_q \times pr} & \mathbf 0_{rk_q\times q} & \mathbf 0_{rk_q \times qr}  & \mathbf 0_{rk_q \times m_p} &  \mathbf 0_{rk_q \times qp} & \mathbf 0_{rk_q \times k_q}  & \mathbf I_{rk_q} \end{pmatrix} ,\label{laM}   
\end{align}
and 
\begin{equation}\label{W}
\W=\left(\begin{array}{c} 
\mathbf I_r \otimes \left(\begin{array}{l}\mathbf{I}_p \\ \mathbf{0}_{q\times p}\\ \mathbf{0}_{k_q \times p } \end{array}\right),
\mathbf I_r \otimes \left(\begin{array}{l} \mathbf{0}_{p \times q}\\ \mathbf{I}_{q}\\ \mathbf{0}_{ k_q \times q }\end{array}\right),
\mathbf I_r \otimes  \left(\begin{array}{l}\mathbf{0}_{p \times k_q} \\ \mathbf{0}_{q \times k_q}\\ \mathbf{I}_{k_q} \end{array} \right)
\end{array}\right).
\end{equation}
\end{proposition}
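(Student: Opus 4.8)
The plan is to reach \eqref{VarAsintoticad} through three linked limits: (i) joint asymptotic normality of the maximum likelihood estimator $\widehat{\varthetabf}$ of the natural parameter, with covariance $\V$ given by \eqref{Vinv}; (ii) transport of this limit to $\vecc(\widehat{\bb})$ via the \emph{linear} map that builds $\bb$ from $\varthetabf$, producing $\V_{rcl}$ in \eqref{vrcl}; and (iii) propagation through the projection map $\bb\mapsto\Pb_{\alphabf_{\bb}}$ by the delta method, whose Jacobian sandwiches $\V_{rcl}$ to give \eqref{VarAsintoticad}.

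For (i), I would use that $(\X,\Hbf)\mid Y$ belongs to the exponential family \eqref{genExpFam} with natural parameter $\etabf_y=\F_y\varthetabf$ linear in $\varthetabf$. The conditional score in $\varthetabf$ is then $\F_y^T\big(\T(\X,\Hbf)-\E[\T(\X,\Hbf)\mid Y]\big)$ and the per-observation conditional information is $\F_y^T\mathbf J\F_y$, where $\mathbf J=\partial^2\psi(\etabf_y)/\partial\etabf_y\partial\etabf_y^T$ is the variance of $\T(\X,\Hbf)$. Averaging over $Y$ gives the total Fisher information $\E(\F_y^T\mathbf J\F_y)=\V^{-1}$, so that, taking $\widehat{\varthetabf}$ to be the joint maximum likelihood estimator, standard regularity conditions yield $\sqrt n\,(\widehat{\varthetabf}-\varthetabf)\xrightarrow{\mathcal D}\mathcal N(\0,\V)$.

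For (ii), I would note that $\bb$ in \eqref{bb} is obtained from the blocks $\varthetabf_{1,1},\varthetabf_{2,1},\varthetabf_{5,1}$ of $\varthetabf$ by \emph{selection and rearrangement only}: the matrix $\M$ in \eqref{laM} extracts $(\varthetabf_{1,1}^T,\varthetabf_{2,1}^T,\varthetabf_{5,1}^T)^T$, and $\W$ in \eqref{W} reorders these block-stacked coordinates into the column-stacked ordering of $\vecc(\bb)$, so that $\vecc(\bb)=\W\M\varthetabf$ holds \emph{exactly} with $\W$ and $\M$ constant. By invariance of maximum likelihood under reparametrization the plug-in estimator \eqref{mleb} satisfies $\vecc(\widehat{\bb})=\W\M\widehat{\varthetabf}$, so the linear delta method gives $\sqrt n\,\vecc(\widehat{\bb}-\bb)\xrightarrow{\mathcal D}\mathcal N\big(\0,\W\M\V\M^T\W^T\big)=\mathcal N(\0,\V_{rcl})$.

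The crux is (iii): differentiating the map sending $\bb$ to the orthogonal projector $\Pb_{\alphabf_{\bb}}=\Uu_1\Uu_1^T$ onto its top-$d$ left-singular subspace, with $\Uu_1$ as in \eqref{nec}--\eqref{bbb}. Because $\bb$ has rank exactly $d$, its $d$-th singular value is positive while the $(d+1)$-th vanishes; this spectral gap makes $\widehat{\bb}\mapsto\widehat{\Uu}_1\widehat{\Uu}_1^T$ differentiable at $\bb$ even though $\widehat{\bb}$ is generically of full rank. I would compute the differential by first-order singular-subspace perturbation, obtaining $\Uu_0^T\,d\Uu_1=\Uu_0^T(d\bb)\R_1\K^{-1}$; since $\bb^-=\R_1\K^{-1}\Uu_1^T$ and $\Q_{\bb}=\Uu_0\Uu_0^T$, the within-subspace rotation leaves $\Pb_{\alphabf_{\bb}}$ unchanged and only the cross term survives, giving $d\Pb_{\alphabf_{\bb}}=\Q_{\bb}(d\bb)\bb^-+(\bb^-)^T(d\bb)^T\Q_{\bb}$. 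Vectorizing with $\vecc(ABC)=(C^T\otimes A)\vecc(B)$ and the commutation matrix $\mathbf K_{mm}$ (so $\vecc(M^T)=\mathbf K_{mm}\vecc(M)$) gives the Jacobian $(\I_{m^2}+\mathbf K_{mm})(\bb^-\otimes\Q_{\bb})^T$, and the delta method applied to step (ii) delivers the covariance \eqref{VarAsintoticad}. The main obstacle is exactly this projection derivative: one must justify differentiability at the rank-deficient $\bb$ through the singular-value gap and verify that the top-$d$ singular-subspace perturbation collapses to the Golub--Pereyra column-space form $\Q_{\bb}(d\bb)\bb^-$; the score/information identity of step (i) and the constancy of $\W,\M$ in step (ii) are then routine.
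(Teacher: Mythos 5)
Your proposal is correct, and steps (i) and (ii) coincide exactly with the paper's: the paper proves $\V^{-1}=\E(\F_y^T\J\F_y)$ by the same Fisher-information computation for the exponential family with $\etabf_y=\F_y\varthetabf$, and then uses precisely the linearity $\vecc(\widehat{\bb})=\W\M\widehat{\varthetabf}$ to get $\sqrt{n}\,\vecc(\widehat{\bb}-\bb)\xrightarrow{\mathcal{D}}\mathcal{N}(\0,\V_{rcl})$. Where you genuinely diverge is step (iii). You differentiate the map from $\bb$ to the projection onto its top-$d$ left singular subspace directly at the rank-deficient point, invoking the spectral gap $k_d>0=k_{d+1}$ to get smoothness and the perturbation identity $d\Pb_{\alphabf_{\bb}}=\Q_{\bb}(d\bb)\bb^-+(\bb^-)^T(d\bb)^T\Q_{\bb}$. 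The paper never perturbs the SVD: it instead introduces the surrogate $\widehat{\Hbf}=\widehat{\Uu}_1\widehat{\K}_1\widehat{\R}_1^T\R_1\K^{-1}$, which spans the same column space as $\widehat{\Uu}_1$, shows by elementary algebra (using $\widehat{\R}_0^T\widehat{\R}_1=\0$ and $\widehat{\K}_0=O_p(n^{-1/2})$, the latter being where $\rank(\bb)=d$ enters) that $\widehat{\Hbf}=\Uu_1+(\widehat{\bb}-\bb)\R_1\K^{-1}+o_p(n^{-1/2})$, and then applies the delta method only to the map $\Gammabf\mapsto\Pb_{\Gammabfs}$ at the \emph{full-rank} matrix $\Uu_1$, whose derivative $(\I_{m^2}+\K_{mm})(\Gammabf(\Gammabf^T\Gammabf)^{-1}\otimes\Q_{\Gammabfs})$ is computed by routine matrix calculus. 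Both routes produce the identical Jacobian $(\I_{m^2}+\K_{mm})(\bb^-\otimes\Q_{\bb})^T$, since $(\bb^-)^T=\Uu_1\K^{-1}\R_1^T$ and $\Q_{\bb}=\Q_{\Uu_1}$, hence the same covariance. Your approach is more direct and conceptually cleaner, but its rigor rests on singular-subspace perturbation theory (differentiability of the top-$d$ projector at a rank-$d$ matrix), which you would need to prove or cite; the paper's surrogate-matrix device buys the same conclusion using only $O_p$ bookkeeping and a derivative at a full-rank point. One consistency check in your favor: both your derivation and the paper's proof yield the factor $(\I_{m^2}+\K_{mm})$, indicating that the $(\I_{m^2}\otimes\K_{mm})$ appearing in the displayed statement \eqref{VarAsintoticad} is a typographical slip.
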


\subsection{Tests for dimension}\label{testdimension}

We propose  two asymptotic tests for the dimension of the sufficient reduction in {\it optimal SDR}.  
We adapt these tests for the case of \textit{sub-optimal SDR},  to estimate the dimension of the continuous predictors separately from the binary predictors. 

The dimension of the sufficient reduction is the rank of $\bb$ in \eqref{bb}. 
We estimate the rank $d$ of $\bb$ by  sequentially
testing the hypotheses 
\begin{equation}\label{test}
H_0: \rank (\bb) = j \quad \text{vs.} \quad H_1: \rank (\bb) > j,
\end{equation}
for $j=0, 1,\ldots, \min(r,m)$, where $m=p+q(q+1)/2$.
For a fixed level $\alpha$, the estimated rank is the smallest value of $j$ for which the null is not rejected. 

\citet{BuraYang2011} proposed asymptotic tests for the rank of random matrices in sequential hypothesis testing. To construct the corresponding tests for dimension, 
we consider the singular value decomposition  of $\bb$ in \eqref{bbb} and $\widehat{\bb}$ in \eqref{bestimator} with $d=j$.

The first statistic we use to test (\ref{test}) is
$\Lambda_1(j)=n\sum_{i=j+1}^{\min(m,r)}\hat{k}_i^2,$
where $\widehat{k}_i$'s are the  singular values of $\widehat{\bb}$ in descending order.  
Proposition \ref{AsymptDistn} obtains the asymptotic normality of  $\widehat{\bb}$ with  covariance ${\V}_{rlc}$ in \eqref{vrcl}.  
When $\hbox{rank} (\bb) = j$,
\begin{equation}\label{wchisq-stat}
\Lambda_1(j)\overset{\mathcal{D}}\longrightarrow \sum_{i=1}^{s}\omega_i X^2_i,
\end{equation}
where $s=\min(\rank (\V_{rcl}), (r-j)(m-j))$, $X_i^2$ are independent chi-squared random variables with 1 degree of freedom, and the weights are the descending eigenvalues of  $\Q=(\R_0^T \otimes \Uu_0^T) \V_{rcl} (\R_0 \otimes \Uu_0)$  [see \citet{BuraYang2011}]. In practice, the weights $\omega_i$, $i=1,\ldots,s$  are replaced by $\widehat\omega_1 \ge \widehat\omega_2 \ge \ldots \ge \widehat\omega_s$, the descending eigenvalues of 
\begin{align}\label{Qhat}
\widehat{\Q}&=(\widehat{\R}_0^T \otimes \widehat{\Uu}_0^T) \widehat{\V}_{rcl} (\widehat{\R}_0 \otimes \widehat{\Uu}_0),
\end{align}
where $\widehat{\V}_{rcl}$ is a consistent estimate of $\V_{rcl}$.
This test rejects $H_0$ if $\Lambda_1(j) > q_{\alpha}$, where $q_{\alpha}$ is the $(1-\alpha)$
percentile of the distribution of $\sum_{i=1}^{s}\widehat{\omega}_i X^2_i$.
We estimate $q_{\alpha}$ from  the  empirical distribution function of $\Lambda_1$, by generating $10000$ realizations of $\sum_{i=1}^{s}\widehat{\omega}_i X_{i}^2$ and computing the empirical quantile $\widehat{q}_{\alpha}$.

The second is a Wald test with test statistic,
$\Lambda_2(j)=n \vecc(\widehat{\mathbf{K}}_0)^T\widehat{\Q}^{\dag}\vecc(\widehat{\mathbf{K}}_0)$,
where $\widehat{\mathbf{K}}_0$ is defined in (\ref{bbhat}) and $\widehat{\Q}^{\dag}$ is the Moore-Penrose inverse of $\widehat{\Q}$  in \eqref{Qhat}.

Following \cite{BuraYang2011},  since $\widehat{\bb}$ is asymptotically normal, if $j=\rank (\bb)$, 
then 
$\Lambda_2(j)\overset{\mathcal{D}}\longrightarrow \chi^2(s)$,
where the degrees of freedom are $s=\min(\rank (\V_{rcl}), (r-j)(m-j))$.
The rejection region is  $\Lambda_2(j)
> \chi^2_{\alpha}(s)$, where $\chi^2_{\alpha}(s)$ is the
$(1-\alpha)$ percentile of the $\chi^2(s)$ distribution.

\section{Variable selection}\label{varsel}

Identifying variables that are not associated with the outcome is important for both interpretation and for improving the predictive power of a classifier or  a regression model. We propose a method to simultaneously obtain the sufficient reduction and carry out variable selection by removing redundant variables from the reduction. 
This is obtained jointly with the estimate of the reduction by introducing structured regularization on a matrix factorization problem.

In particular, we exploit the factorization of the full rank maximum likelihood estimate $\widehat{\bb}$ into a relevant full-rank factor $\Cc \in {\mathbb R}^{p+q(q+1)/2 \times d}$, which determines the reduction, and a matrix $\B$ that is immaterial.

The building block of the procedure proposed here is to note that the reduced rank estimator $\widehat{\bb}^{(d)}= \widehat{\Uu}_1\widehat{\B} $ in \eqref{bestimator} 
is also the solution to the least squares minimization  problem  
\begin{equation}\label{cuadratic}
 \min_{\Cc \in {\mathbb R}^{p+q(q+1)/2 \times d}, \Cc^T\Cc=\I, \B \in {\mathbb R}^{d \times r}} (\vecc (\widehat{\bb}) - \vecc (\Cc\B))^T (\vecc (\widehat{\bb}) - \vecc (\Cc\B)),
\end{equation}
where $\widehat{\bb}$ is the maximum likelihood estimator of $\bb$. 
The solution can be expressed as $\widehat{\Cc}= \widehat{\Uu}_1\V$, for some orthogonal matrix $\V\in {\mathbb R}^{d\times d}$, so that $\spn(\widehat{\Cc})=\spn(\widehat{\Uu}_1)$. 

All sufficient  reductions in Section~\ref{redmin} are of the form 
$\R(\X,\Hbf) 
= \Uu_{1}^T  ({\mathbf t} (\X,\Hbf)-\E({\mathbf t}(\X,\Hbf))$. If $t_j$ is the
$j$th component of ${\mathbf t}(\X,\Hbf)$, and  $t_j$ is not associated with $Y$, the $j$th row 
of $\Uu_1$ is  zero.  
Therefore,  identifying predictors that are conditionally independent of $Y$ corresponds to identifying the rows of $\Cc$ that contain only  $0$. This can be achieved using mixed-norm regularizers that are known to induce structured sparsity in the estimates \citep{bach2012}. 

The proposed procedure is as follows. For a fixed $d$, once we obtain  $\widehat{\bb}^{(d)}= \widehat{\Uu}_1 \widehat{\B}$ in \eqref{bestimator}, we solve 
\begin{equation} \label{eq:penquadratic}
 \argmin_{\Cc \in {\mathbb R}^{(p+q(q+1)/2)\times d}, \Cc^T\Cc=\I }\left(\vecc (\widehat{\bb}) - \vecc (\Cc \widehat{\B})\right)^T \left(\vecc (\widehat{\bb}) - \vecc (\Cc \widehat{\B})\right) + \lambda\Omega(\Cc),
\end{equation}
where $\Omega(\Cc)$ is a mixed-norm regularizer which penalizes the rows of $\Cc$ in a similar manner to group-lasso. The specific form of $\Omega(\Cc)$ depends on the type of predictor variables involved in the problem, as follows.

\begin{itemize}
\item[(a)] When all predictors are continuous (normal), we  use the penalty $\Omega(\Cc) = \sum_{j=1}^p || {\Cc}_j ||_2$, with $\Cc_j$ the $j$th row of $\Cc$. In this case the sufficient reduction contains no interaction terms and each row of $\Cc$ affects a single element of $\X$. Hence, by shrinking the $j$th row of $\Cc$ to $0$, the computed reduction becomes insensitive to the measured value of $X_j$. 
When all predictors are continuous, under the assumed model the optimization problem is indeed fairly similar to group lasso \citep{yuan2006} as can be seen after rewriting (\ref{eq:penquadratic}) as
\begin{equation*}
	\argmin_{\Cc \in {\mathbb R}^{(p+q(q+1)/2)\times d}, \Cc^T\Cc=\I } \lVert \vecc (\widehat{\bb}) - (\widehat{\B}^T\otimes\I)\vecc(\Cc) \rVert_2^2 + \lambda\sum_{j=1}^p \lVert\Cc_j\rVert_2.
\end{equation*}  

\item[(b)] When all predictors are binary, the sufficient reduction includes interaction effects $H_iH_j$. Thus, to discard the effect of a given binary variable, say $H_j$, we need to set all the entries in $\Cc$ related to $H_j$ to zero. For a reduction of dimension $d$, there are $d$  such entries related to the main effects and $d(q-1)$ related to the interaction terms. 
The grouping of the entries of $\Cc$ does not form a partition, since the entries affecting the interaction terms appear twice. For instance, assume for simplicity that $d=1$. Parameter $\theta_{13}$ operates on variables $H_1$ and $H_3$ and then it enters the regularizer in groups $\{\eta_1,\theta_{12},\theta_{13},\dots,\theta_{1q}\}$ and $\{\eta_3,\theta_{13},\theta_{23},\dots,\theta_{q3}\}$. Both groups of parameters overlap at $\theta_{13}$. Thus, the regularizer inducing the desired sparsity structure is a mixed-norm regularizer with overlapping groups, $\Omega(\Cc)=\sum_{g \in \mathcal{G}}\lVert \Cc_{g}\rVert_2$. Here, $g  \subset \{1,\dots,dq(q+1)/2\}$ indicates the subset of entries that affect the binary variable $H_i$ and $\mathcal{G}$ is the collection of such groups. Moreover, each binary variable is associated with two groups, one derived from the main effects and one from the interaction terms, since they typically have rather different scales. The obtained regularized problem can be solved using algorithms for overlapping group lasso, as proposed, for example, in \cite{Liuetal2010}.

\item[(c)] When the predictors are mixed normal and binary, we  combine the regularizers described in (a) and (b) in a single penalty $\Omega(\Cc) = \gamma \sum_{j=1}^{p} \lVert {\Cc}_j \rVert_2  + (1-\gamma) \sum_{g \in \mathcal{G}} \lVert \Cc_{G_i}\rVert_2$. The value of $\gamma$ serves as a tuning  weight for the amount of regularization in the continuous and binary parts, respectively. In sub-optimal SDR,  we carry out variable selection  separately for the continuous and binary variables as described in (a) and (b).

\end{itemize}

Selection of hyperparameters $(\lambda,\gamma)$ is done using 10-fold cross validation, with prediction error as the optimization criterion.
The procedure starts by estimating a maximum value $\lambda_m$ so that the whole estimate vanishes for any $\lambda>\lambda_m$. We then set a grid of $n_\lambda$ candidate values for $\lambda$, uniformly spaced on a logarithmic scale between $0$ and $\lambda_m$.  We typically use $n_\lambda=100$. For $\gamma$ we test 11 values uniformly spaced in $[0,1]$.
In each fold, an initial full-rank estimate of the reduction is computed using the training set and then factorized using  truncated SVD to give $\widehat{\mathbf{B}}$ and an initial estimate for $\Cc$. Problem (\ref{eq:penquadratic}) is  solved for each pair of candidate values $(\lambda_k,\gamma_k)$. The obtained reduction is applied to both the training and the test sample. With the reduced training set we fit a prediction model and then we evaluate the prediction error on the reduced test sample.
The average prediction error over the ten cross-validation folds is then computed for each candidate pair $(\lambda_k,\gamma_k)$. We pick the combination that attains the smallest mean prediction error.

\section{Simulation Studies}\label{simulation}

We assess the performance of the proposed methods in estimating the sufficient reduction and its dimension, out-of-sample prediction, and variable selection   in simulations. 

In all our simulations the response is generated from the uniform distribution on the integers $\{1,\ldots, r+1\}$, with $r=5$, and set $\f_{y}=I(y=j)-n_j/n$, where $I$ is the indicator function,  $n$ denotes the total sample size and  $n_j$ the number of observations in category $j$ for $j=1,\ldots, r$.  All reported results are based on sample sizes $n=100, 200, 300, 500, 750$, and 100 repetitions.

\subsection{Estimation, prediction and dimension tests}

We assess the accuracy of estimating  $\spn(\alphabf)$ with $\spn(\widehat{\alphabf})$ using  $||\Pb_{\alphabfs}- \Pb_{\widehat{\alphabfs}}||_2$ [see \cite{YeLim2016}].
The prediction error is computed as $||\Pb_{\alphabfs^T (\X_N, \Hbf_N)}- \Pb_{\widehat{\alphabfs}^T (\X_N, \Hbf_N)}||_2$, where $(\X_N,\Hbf_N)$ is a new sample of size $N=2000$ that is independent of the training sample.  
We estimate the sufficient  reduction using the true $d$. 
 
\subsubsection{Continuous predictors}\label{cont.sim}

We generate $p$-variate continuous predictors as $\X \mid Y=y \sim\mathcal{N}( \mubf_y, \Deltabf) $ with $\mubf_y = \A \f_y$ for  $\A = \Deltabf \alphabf \xibf$,  where $\alphabf \in \real^{p \times d}$ of  $\rank(\alphabf)=d$  and $\xibf \in \real^{d \times r}$.  We let $p=20$ and $\mathbf{0}_l$, $\1_l$ denote the $l$-vectors of zeros and ones, respectively. 
\begin{itemize}
\item[(a)] For $d=1$, we set $\xibf = \1_{r}^T$ 
$\alphabf =(\mathbf{0}_{p/2}^T,\1_{p/2}^T)^T$,
$\Deltabf=  5(\I_p+\rho \alphabf\alphabf^T) $ with  $\rho=0.55$.
   
\item[(b)] For $d=2$, we set
\[\xibf =\begin{pmatrix} 
1 & 1 & 1 & 1 & 1\\
0 & 0  & 0  & 1 & 1
\end{pmatrix},
\]
and $ \alphabf =(\alphabf_1, \alphabf_2)$
be an orthonormal basis of $\spn((\mathbf{0}_{p/2}^T, \1_{p/2}^T)^T,(\mathbf{0}_{p/2}^T, \1_{p/4}^T, -\1_{p/4}^T)^T)$,
    $\Deltabf=  5(\I_p+\rho_1\alphabf_1\alphabf_1^T + \rho_2\alphabf_2\alphabf_2^T )$ for $\rho_1=0.55$ and $\rho_2=0.25$.
    
\end{itemize}
  
 \subsubsection{Binary predictors}\label{bin.sim}
 
 We  generate $q=10$ binary predictors  assuming that $\Hbf\mid Y$ follows an Ising model with parameters $\{\taubf_0,\taubf\}$, where $\taubf = [\vech ({\taubf}_1),\dots, \vech ({\taubf}_{r})]$, $\taubf_j$ are $q\times q$ matrices and set $\taubf_0=\mathbf{0}$.  
 
 \begin{itemize}
\item[(a)] For $d=1$, and $j=1, \dots, r$, $\taubf_j = 3\times{\mathbf K_1}/\sqrt{\sum_{ij}([\mathbf K_1]_{ij})}$ with \[{\mathbf K_1} = \left(
\begin{array}{cccccc|c}
1 & 30 & 5 & 0  & 0&0&\0_{1 \times 4}\\
30 & 1 & 10 & 0& 0&0& \vdots\\
5 & 10& 1 & 30 & 0& 0&\vdots \\
0 & 0  & 30 & 1 & 30 & 0& \vdots\\
0 & 0&0& 30 &1 & 30 & \vdots\\
0&0&0&0&30 &1 & \vdots\\
0&0&0&0&0 &30 & \0_{1 \times 4}\\
\cline{1-6} 
\0_{3 \times 1} & \cdots & \cdots &\cdots &\cdots &\multicolumn{1}{c}{\0_{3 \times 1}} & \0_{3 \times 4}\\
 \end{array} \right).\] 

\item[(b)] For $d=2$, $\taubf_j=3\times\mathbf{K}_1/\sqrt{\sum_{ij}([\mathbf K_1]_{ij})}$, 
for $j=1,3,4,5$, and 
\[\taubf_2=\frac{12}{\sqrt{6}} \times \left(
\begin{array}{ll}
{\mathbf I}_{6} & {\mathbf 0}_{6 \times 4}\\
{\mathbf 0}_{4 \times 6} & {\mathbf 0}_{4 \times 4}
\end{array}\right)
. \]

\end{itemize}

\subsubsection{Mixed predictors}

 \begin{itemize}
\item[(a)] For $d=1$,  we use the same parameters as for continuous $\X\mid (\Hbf,Y)$ and binary variables $\Hbf \mid Y$ in Sections \ref{cont.sim}, \ref{bin.sim}, respectively. Moreover, we set $\mubf_H =0$ and 
$\betabf = \left(  \1_{p\times 6}/10,\0_{p \times 4}\right) \in \real^{p \times q}$. 
to induce sparsity in the binary predictors. 

\item[(b)] For $d=2$, we  generate $\Hbf \mid Y$ as in Section \ref{bin.sim} with dimension 1 and $\X \mid (\Hbf,Y) $ as in (a) with dimension 2. 
\end{itemize}

\begin{figure}
    \centering
    \includegraphics[width=7cm, height=6.9cm]{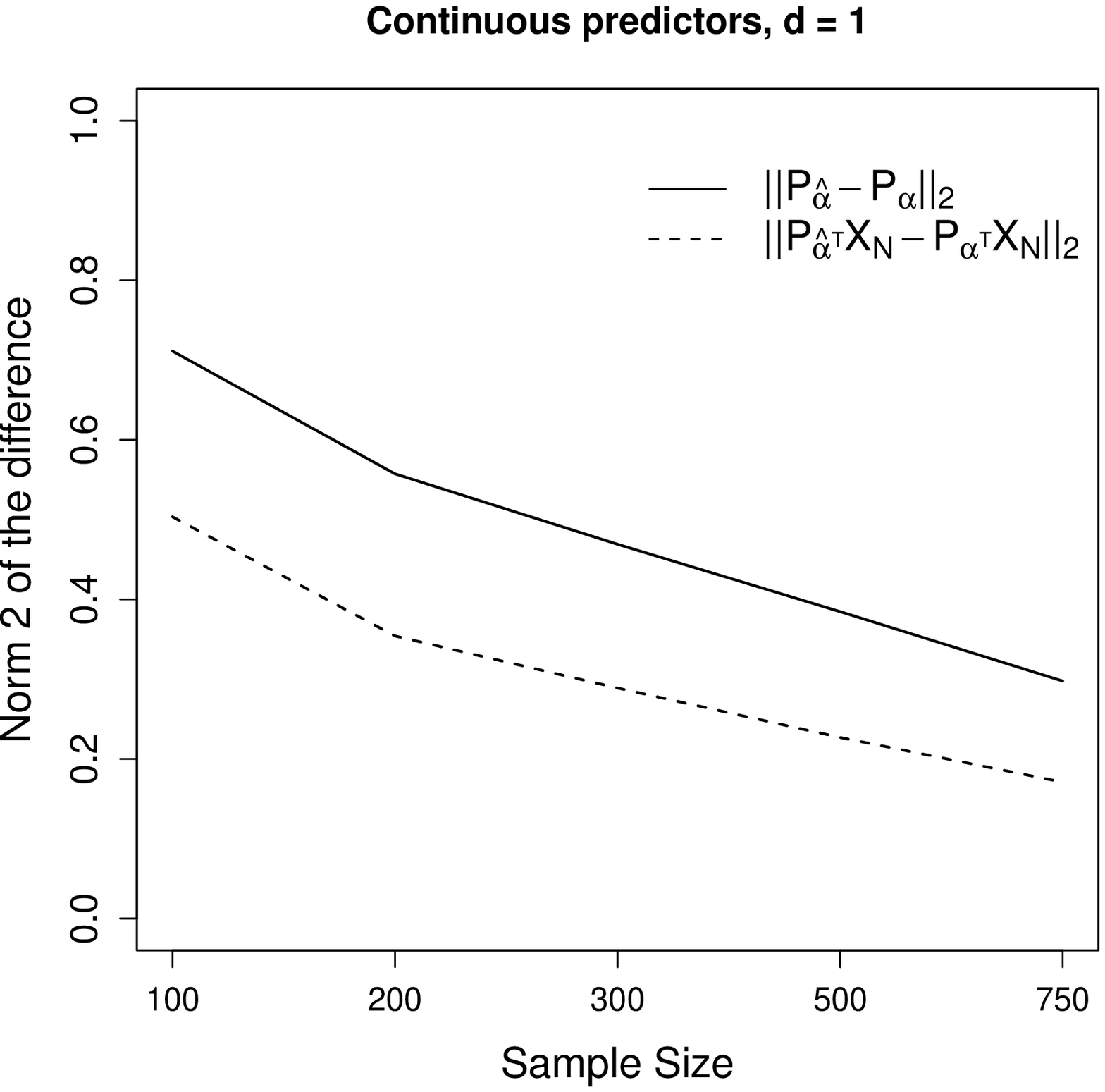}
    \includegraphics[width=7cm, height=6.9cm]{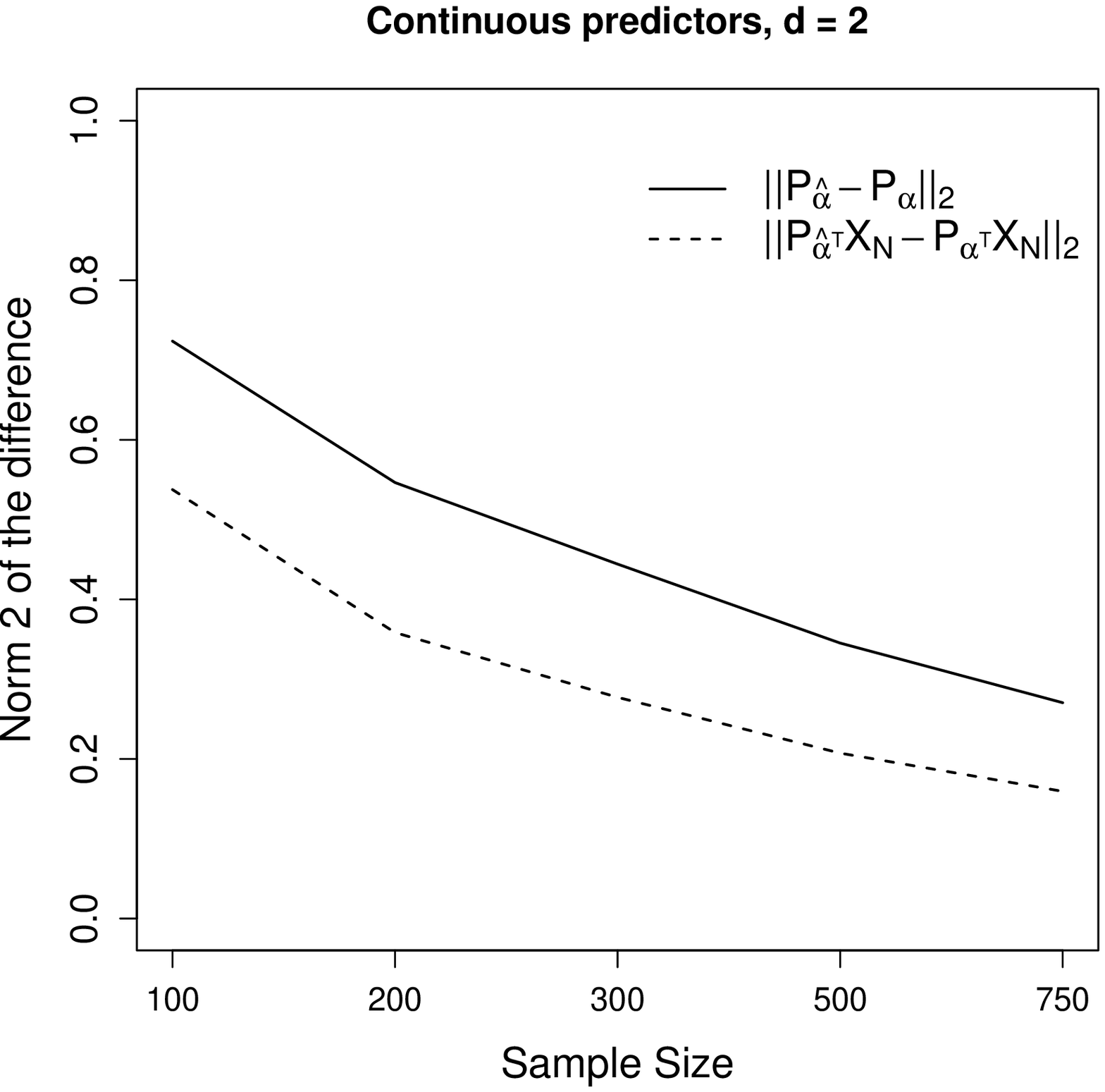}\\
    \includegraphics[width=7cm, height=6.9cm]{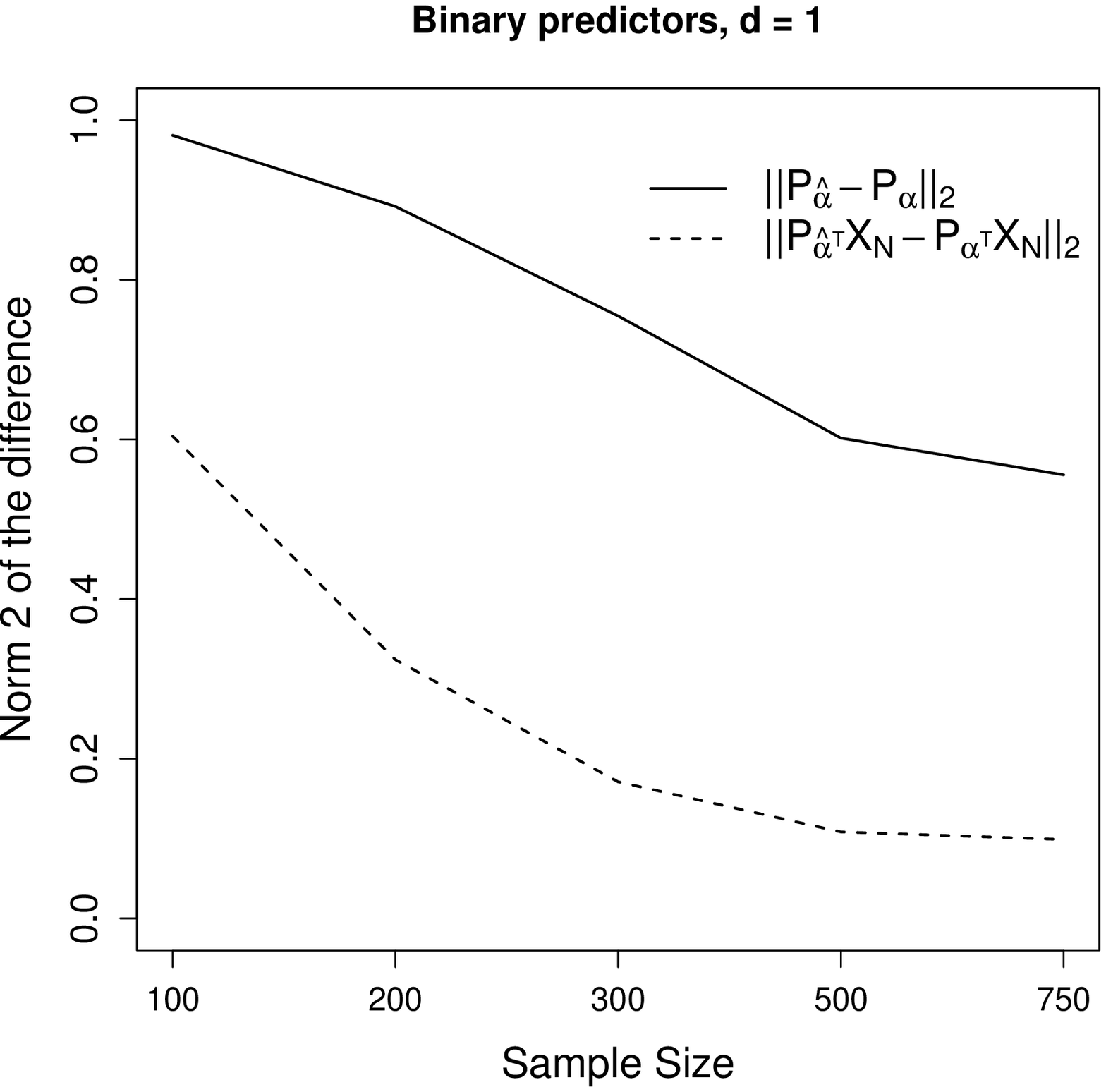}
    \includegraphics[width=7cm, height=6.9cm]{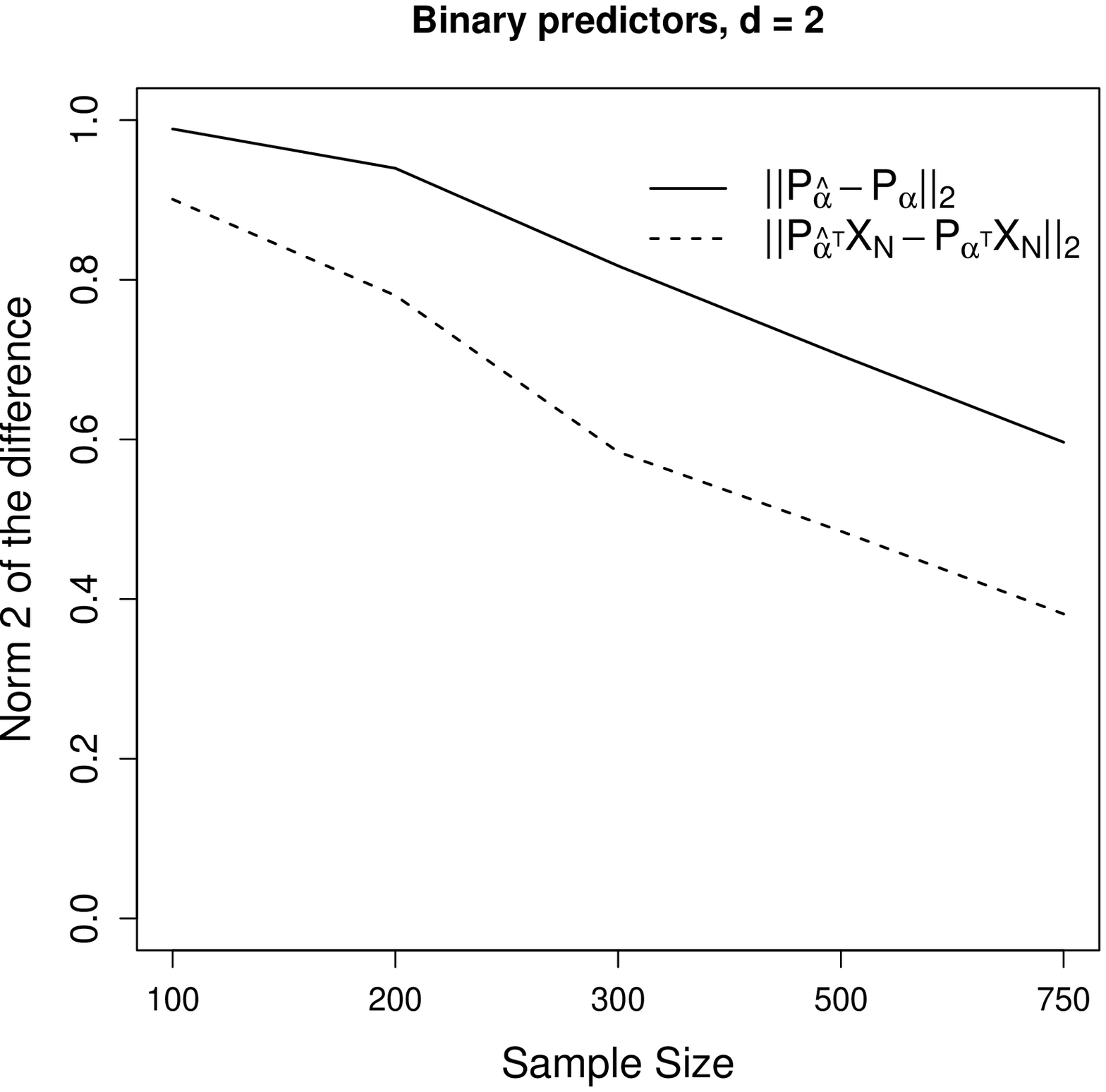}\\
    \includegraphics[width=7cm, height=6.9cm]{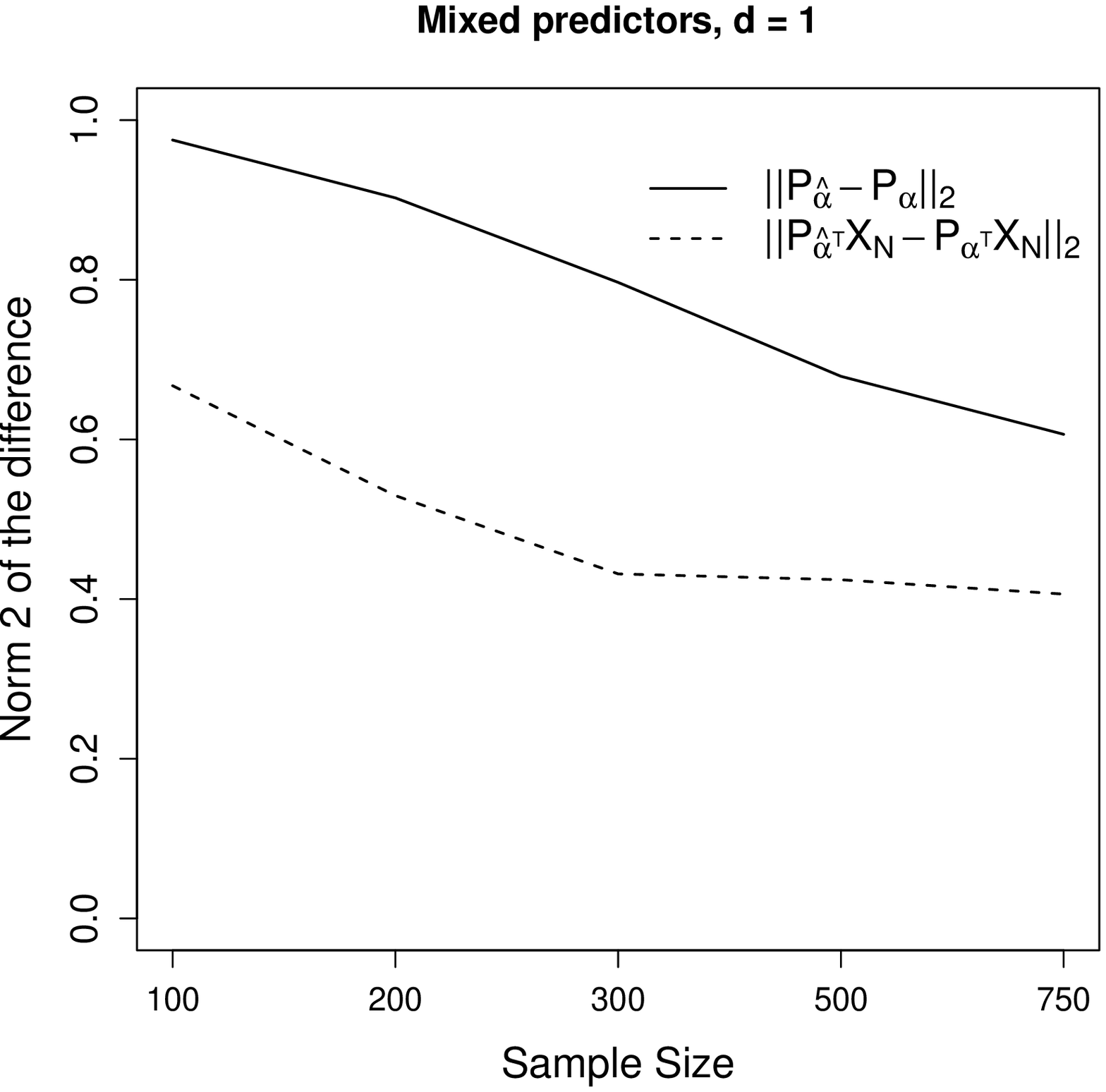}
    \includegraphics[width=7cm, height=6.9cm]{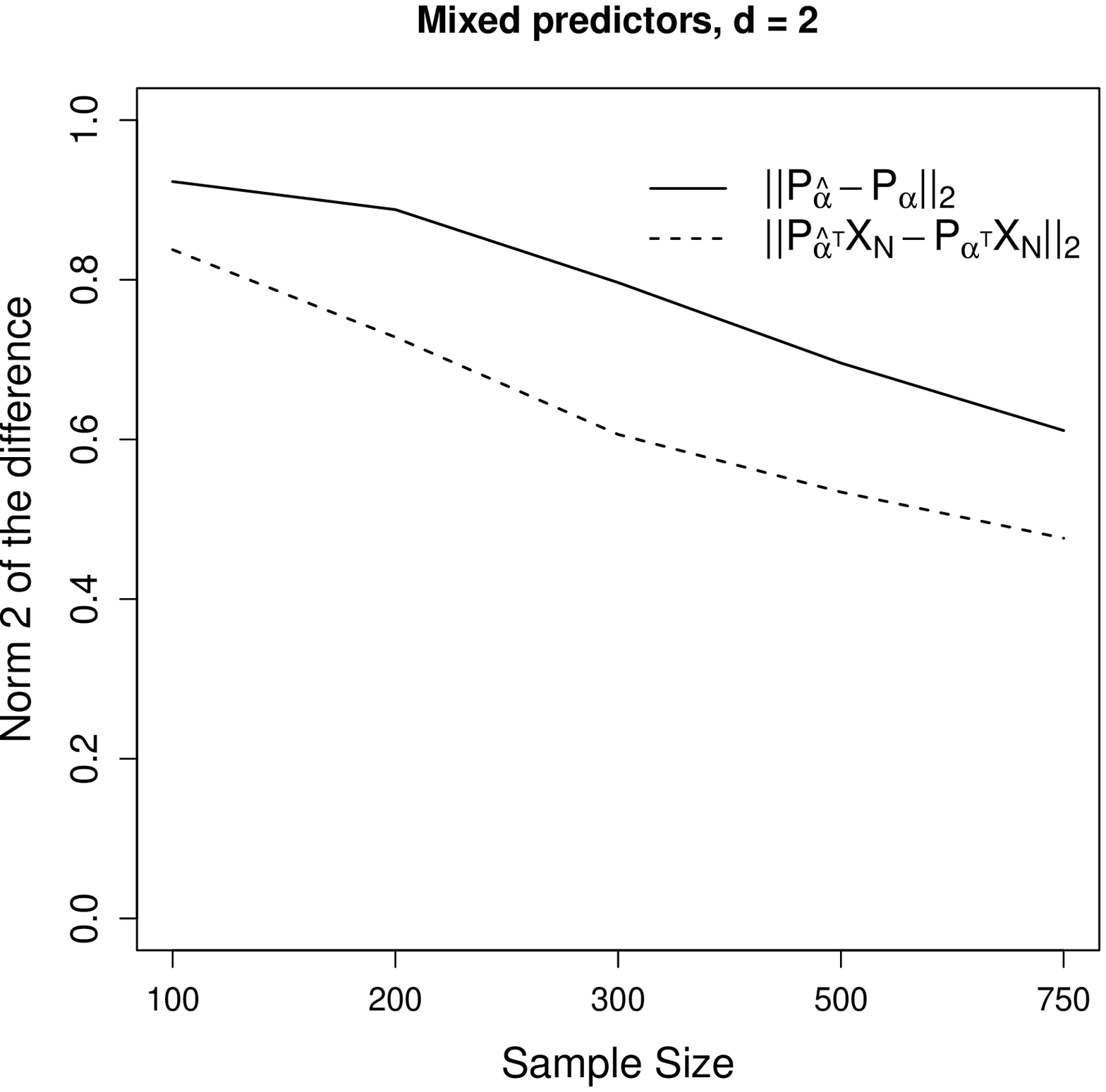}
\caption{Estimation error and out of sample prediction of optimal SDR with continuous, binary and mixed predictors for $d=1$ (Left) and $d=2$ (Right).} \label{fig:opt}
\end{figure}

 In Figure \ref{fig:opt},  we plot the estimation error $\norm{\Pb_{\widehat{\alphabf}} -\Pb_{\alphabf}}_2$  and the prediction error $\norm{\Pb_{\widehat{\alphabf}^T\X_N} -\Pb_{\alphabf^T \X_N}}_2$ for \textit{optimal SDR} on the $y$-axis   versus the training sample size on the $x$-axis  across all our  simulation scenarios. For all types of predictors the prediction  is smaller than the estimation error and both decrease as the sample size increases. Moreover, both increase as the dimension increases from 1 to 2 in the left and right panels, respectively, across types of predictors. When comparing types of predictors, continuous predictors exhibit higher estimation and prediction errors across sample sizes and mixed predictors result in the highest estimation and prediction errors.

In Figure \ref{fig:subopt} we plot  the estimation and prediction error of \textit{sub-optimal SDR}, where the continuous and binary variables are reduced separately. The pattern of behavior is consistent with that of optimal SDR in Figure \ref{fig:opt}, with the continuous variables inducing larger errors of both types  across sample sizes and $d=1,2$. Again, the errors are smaller for dimension 1. 
 
\begin{figure}
\centering
\makebox{\includegraphics[width=8cm, height=7cm]{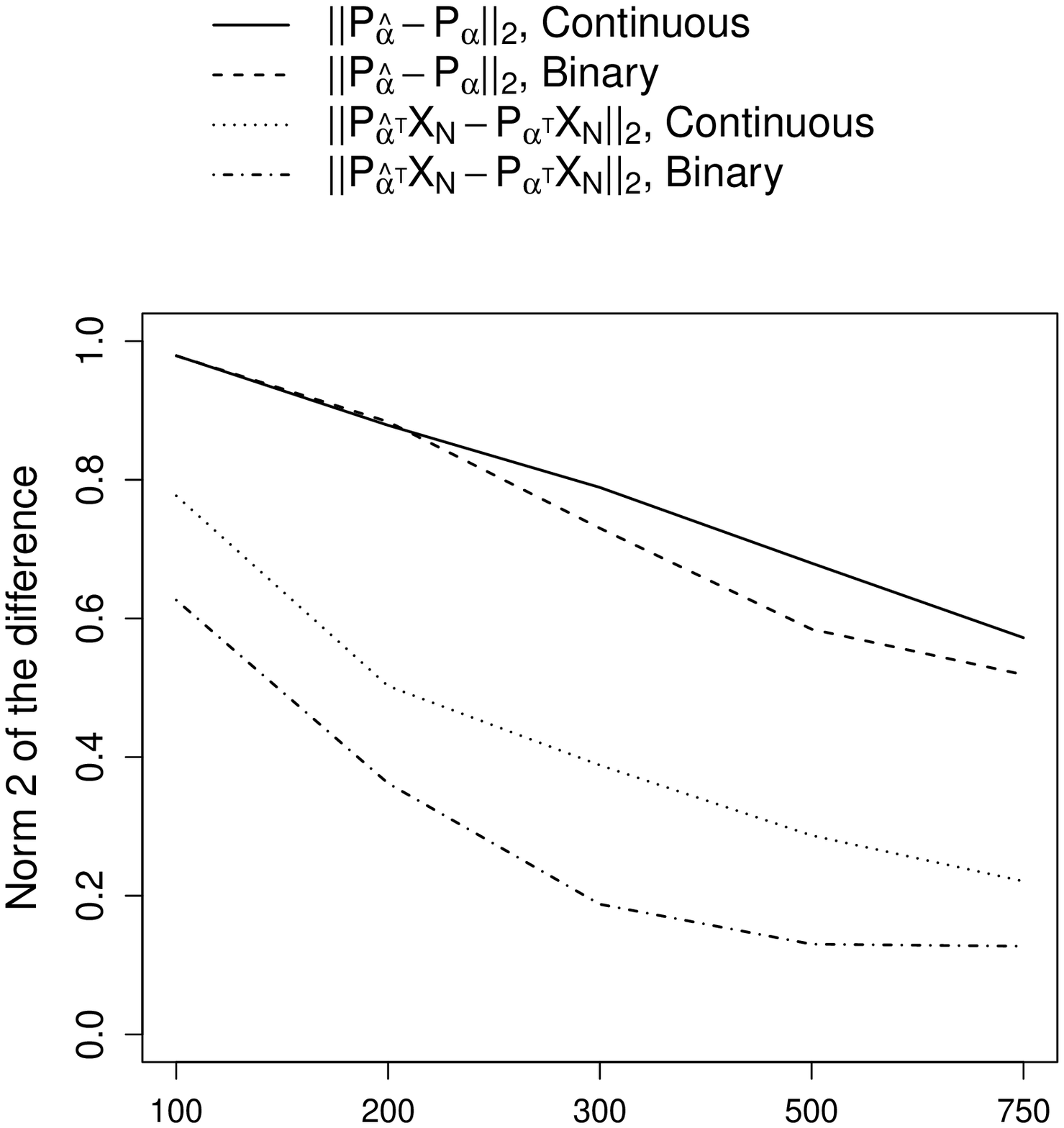}
\includegraphics[width=8cm, height=7cm]{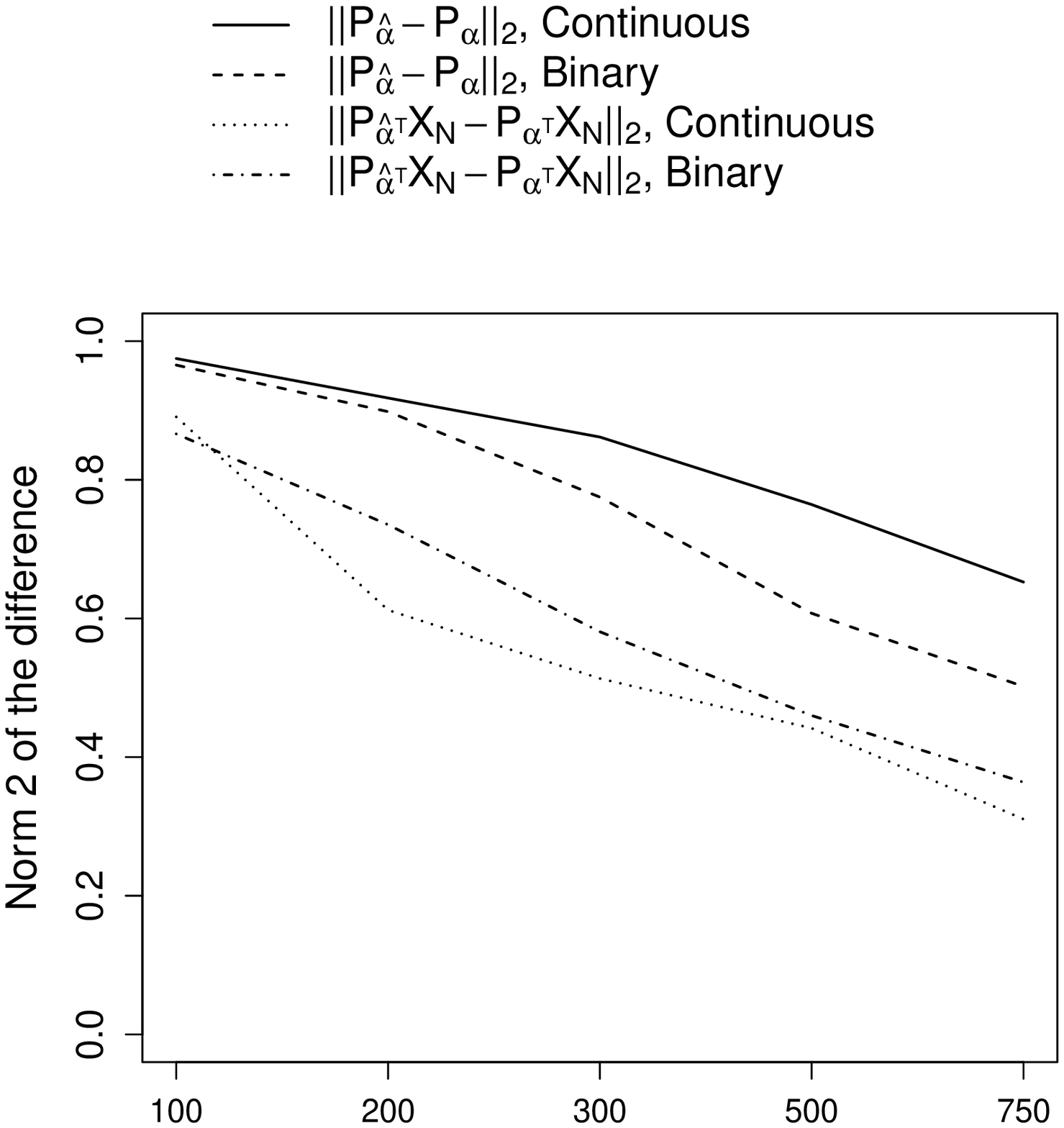}}
\caption{\label{fig:subopt}Estimation error and out of sample prediction with mixed predictors. Suboptimal Reduction for $d=1$ (Left) and $d=2$ (Right).}
\end{figure}

Under the same simulation settings, we also evaluate the performance of our simultaneous variable selection and dimension reduction method that is presented in Section~\ref{varsel}. In Table \ref{simuVS} we report the proportion of variables correctly identified as non-relevant (true positives, TP) and the proportion of variables erroneously assessed as non-relevant (false negatives, FN).  Between $d=1$ and $d=2$, TP is higher across sample sizes,  whereas FN is lower. Both rates improve substantially as the sample size increases. When all predictors are continuous both rates are lower across sample sizes. This is expected since the inclusion of a binary variable results in second order interaction effects in the reduction. Therefore, to rule out a binary variable both its own-coefficient and all the coefficients of its interaction terms must be zero. Overall, our regularized SDR approach achieves high true positive and small false negative rates for reasonable sample sizes.

\begin{table}
\caption{\label{simuVS}Accuracy of the regularized estimator in variable selection.}
\centering
\fbox{
\begin{tabular}{cccccccc}
     & & & \multicolumn{5}{c}{Sample Size}\\
     \hline
     Predictors & $d$ & Rates & 100 & 200 & 300 & 500 & 750\\
    \hline
Continuous   & $1$ & TP &  0.653 & 0.751 & 0.796  & 0.851   & 0.889 \\
               &  & FN  & 0.314   & 0.17 & 0.095 & 0.044  & 0.012\\
         & $2$ & TP & 0.521   & 0.591 & 0.629  & 0.748  &  0.843\\
               
               &  & FN  & 0.165  & 0.048 & 0.014 & 0.004    & 0.002\\
  \hline
        Binary   & $1$ & TP &  0.188& 0.310 & 0.400 & 0.55  & 0.623 \\
               &  & FN   & 0.167 & 0.117 & 0.045 & 0.015  & 0.018\\
      
               & $2$ & TP  & 0.255  &  0.300 & 0.368   & 0.458   & 0.528 \\
 
               &  & FN   &  0.048 & 0.020 & 0.012 & 0.000  & 0.000\\\hline
                Mixed   & $1$ & TP & 0.632 & 0.592 & 0.589 & 0.671  &  0.674\\
               &  & FN   & 0.493 & 0.333 & 0.196  & 0.200  & 0.170 \\
               & $2$ & TP  & 0.596 & 0.656 & 0.639 & 0.583  &  0.610  \\
               &  & FN   & 0.451 & 0.413 & 0.325  & 0.163  & 0.124 \\
    \end{tabular}
    }
\end{table}
                                             
In Table \ref{elegir} we report the
proportion of times out of $100$ replications that the  dimension $d$ was correctly estimated
based on the sequential tests of dimension in Section \ref{testdimension} for all our simulation settings.
The sample size has a noticeable effect in the accuracy of the estimation of dimension, as expected since both tests are asymptotic. 
The weighted $\chi^2$ test accuracy suffers  more from increasing the dimension and all binary predictors as compared to that of the chi-squared test, across sample sizes. For mixed predictors, as well, the chi-squared test exhibits higher accuracy for both optimal and sub-optimal SDR across sample sizes.

\begin{table}
\caption{\label{elegir}Proportion of correct dimension estimation under the simulation settings in Section \ref{simulation}.}
\centering
\fbox{
{\scriptsize
\begin{tabular}{@{}lcllrrrrrr@{}}
Predictors &  &  & \multicolumn{6}{c}{Sample Size} \\
   & Dimension & Test & Method &100 &200 &300 &500 &750 \\
 \hline
Continuous   
  & $d=1$ &   Weighted $\chi^2$  & & 0.60   &  0.82  &  0.83  &  0.89   & 0.95    
   \\ 
    &  & $\chi^2$  & & 0.00 &0.99& 0.99 &1.00& 0.98 
   \\ \cmidrule{2-9}
 & $d=2$ &   Weighted $\chi^2$  &   &  0.65& 0.77 &0.86& 0.94& 0.94  \\
    & &  $\chi^2$  &  & 0.2& 1.00& 0.99 &0.97 &0.95 
   \\
 \hline
   Binary  
   &  $d=1$& Weighted $\chi^2$&&  0 &  0.80 &0.96 &0.94  &0.94  \\
    &  & $\chi^2$&  & 0  &0.02 & 0.94 &0.99 &0.94 \\ \cmidrule{2-9}
  &  $d=2$ & Weighted $\chi^2$ & &    0          &    0.02    &  0.30     & 0.66     & 0.96 \\
    &  & $\chi^2$  & &          0.02  &  0.20&    0.92&  0.96&    0.94
   \\
 \hline
    Mixed &  $d=1$&  
      Weighted $\chi^2$ & Optimal    &0	&0.2&	0.45&	0.64&	0.94
   \\
  && Weighted $\chi^2$ & Sub-optimal (cts)    &0.68	&0.84&	0.89&	0.90&	0.95
   \\
  &&  Weighted $\chi^2$ & Sub-optimal (bin)    &0.5 & 0.75 & 0.87 & 0.94 & 0.95
   \\
  && $\chi^2$ & Optimal    &0 & 0.18 & 1 & 0.98 &0.98
   \\
 &&  $\chi^2$ & Sub-optimal (cts)   
 &0&0.95&0.98& 0.98&0.95
   \\
 &&   $\chi^2$ & Sub-optimal (bin)    &0 & 0.18 & 0.94&0.98&0.96
   \\ \cmidrule{2-9}
    &  $d=2$ &  
      Weighted $\chi^2$ & Optimal   &  0          &    0.06    &  0.30     & 0.45     & 0.90 \\
  && Weighted $\chi^2$ & Sub-optimal (cts)   &  0.60& 0.75 &0.84& 0.95& 0.95
   \\
  &&  Weighted $\chi^2$ & Sub-optimal (bin)   & 0          &    0.08    &  0.40     & 0.56     & 0.96 
   \\
 &&  $\chi^2$ & Optimal    &0.08 &0.36& 0.64& 0.92& 0.93 
   \\
 &&  $\chi^2$ & Sub-optimal (cts)    &  0.12& 0.98& 0.99 &0.96 &0.95\\
&&    $\chi^2$ & Sub-optimal (bin)   &             0.22  &  0.30&    0.96&  0.96&    0.95
   \\ 
\end{tabular}
}
}
\end{table}

\section{Data Analyses}\label{realdata}
We compare our method with other approaches such as generalized linear models and principal component regression in two data applications. 
In particular,  we compare our methods with {\sc PCA} and {\sc PCAmix} in  Sections \ref{Krzanowski} and \ref{Index}.
{\sc PCAmix} \citep{Chaventetal2012,Chaventetal2014} is a version of PCA that accommodates mixed variables and implements  \textit{PCA with metrics};
i.e., Generalized Singular Value Decomposition (GSVD) of pre-processed data [see \cite{Chaventetal2014} for details]. {\sc PCAmix} is  ordinary  standard {\sc PCA}, when all variables are continuous, and standard multiple correspondence analysis ({\sc MCA}), when all  variables are categorical (\cite{Greenacre06}, \cite{Zhu11}, \cite{Camiz13}).

\subsection{Krzanowski Data Sets}
\label{Krzanowski}

\cite{Krzanowski75} studied the problem of discriminating between two groups in the presence of  both binary and continuous explanatory variables. 
\citet{Krzanowski75} modeled the mixed predictors using the \textit{location model}  \citep{OlkinTate1961} and proposed an allocation rule to two groups similar to Fisher's discriminant function. The location model transforms the $q$ binary variables $H_1,\ldots, H_q$ to  the corresponding $2^q$-category multinomial vector and requires the continuous variables be conditionally normal in each of the $2^q$ categories with different means and same variance-covariance matrix.     
He showed that the simple linear discriminant function often gives satisfactory results, except when there is interaction between the mixed variables. 

We analyze four of the five data sets in Krzanowski's paper which contains continuous and binary predictors and a binary response. 
\begin{enumerate}
\item \textbf{Data Set 1:} Ten variables recorded on 40 patients who were surgically treated for renal hypertension. Seven of the variables were continuous and three binary. After one year, 20 patients were classified as improved and 20 as unimproved.
\item \textbf{Data Set 2:} Seven variables recorded on 93 patients suffering from jaundice. Four of the variables were continuous and three binary. The two groups were patients requiring medical and surgical treatment.
\item \textbf{Data Set 3:} Twelve variables recorded on 62 patients suffering from jaundice. Eight of the variables were continuous and four binary. The two groups were patients requiring medical and surgical treatment.
\item \textbf{Data Set 4:} Eleven variables recorded on 186 patients who underwent ablative surgery for advanced breast cancer between 1958
and 1965 at Guy's Hospital, London. Six of the variables were continuous and three binary. The two groups were patients for which the treatment was deemed to be successful and failure. 

\end{enumerate}

Some of the continuous variables were transformed to normality across all data sets.
Since the response is binary, $\f_y$ in \eqref{jointlik} is a vector of frequencies with $r=1$, so that  the dimension either SDR method can detect cannot exceed 1. 
We reduced the mixed predictors using our two methods, {\sc SDR Optimal} and {\sc SDR Suboptimal}, and  also {\sc PCA} and {\sc PCAmix} setting $d=1$. 
In order to assess the classification accuracy of each method, the reduced predictors serve as  independent variables in a logistic regression model. For comparison, we also fit an \textit{unreduced} logistic regression model with all the original predictors, which we refer to as {\sc Full}. 

In Table \ref{Krzanowsky} we report the leave-one-out misclassification  rates and the area under the receiver operator characteristics curve, AUC \cite[p. 67]{Pepe2003}, with the smallest and largest values, respectively, in boldface. {\sc Sub-optimal SDR} emerges as the best method to summarize the mixed predictors with respect to misclassification error, followed by SDR Optimal that has  better performance for data set 1. With respect to AUC, {\sc SDR Suboptimal} is  always the best.

In Table \ref{Krzanowsky}, we also provide   the leave-one-out misclassification rates of Fisher's LDA, logistic regression and Krzanowski's allocation rule based on the location model, as reported in \citet[Tab. 3]{Krzanowski75}. {\sc Sub-optimal SDR}  exhibits better  performance than Krzanowski's location model across data sets. {\sc Optimal SDR} performs the best in all data sets except for data set 2 where it is on par with Fisher's linear discriminant analysis. Moreover, the {\sc Optimal} and {\sc Sub-optimal SDR}  misclassification rates are smaller than all other methods in \cite{Krzanowski75}, as well as  mixed nonparametric kernel methods \citep{VlachoMarriott1982}.
Taken all together, our {\sc SDR} methods for mixed predictors consistently produce targeted data reductions that provide better fit and prediction.

\begin{table}
\caption{\label{Krzanowsky}Leave-one-out misclassification rates and AUC values for four data sets in \citet{Krzanowski75}.}
\centering
\fbox{
{\scriptsize
\begin{tabular}{ccccc|cc|ccc}
{\sc Set} &  & {\sc Optimal} &  {\sc SubOpt.} &  {\sc Full}  & {\sc PCA} & {\sc PCAmix} & {\sc Location}& {\sc Fisher}& {\sc Logistic}\\
\midrule 
1  &  MR &\textbf{0.250}  &  {0.300} &  0.375 &  { 0.325} & 0.425 & 0.350 & { 0.325} & { 0.325} \\
&AUC  & {\bf 0.918}  &  {\bf 0.918} &  0.885 & 0.675 & 0.575 & - & - & - \\
\midrule
2  & MR & {0.280}  &   \textbf{0.204}  & 0.258 &  0.387 & 0.290 & 0.290 & {0.280} & 0.301 \\ 
& AUC & {0.857}  & {\bf 0.858} & 0.837 & 0.513 & 0.469 & - & - & - \\
\midrule 
3  & MR &   {0.161}  &  \textbf{0.145} & 0.226 & 0.484 &0.500 & 0.226 & { 0.177} & 0.222\\
& AUC & 0.949 &  {\bf 0.951} &   0.944 & 0.623 & 0.646 & - & - & - \\
\midrule
4  & MR & {\bf 0.296} &  {\bf 0.290} &   0.392 &0.457 & 0.430 & { 0.328} & 0.382 & 0.371\\ 
& AUC& {\bf 0.784}  & {\bf 0.785} &  0.738 & 0.544 & 0.572 & - & - & - \\
\end{tabular} 
}
}
\end{table}


\subsection{Governance index application}\label{Index}

Considerable social science and economics research is devoted to the construction of indexes for descriptive and predictive purposes 
\citep{Vyas06,Kolenikov09,Filmer12,Merola14, Forzanietal2018}. An index is a statistical summary measure of change in a representative group of individual data points. It usually synthesizes the information contained in a set of $p$ variables $\X\in \real^p$ via a linear combination, $\R(\X)=\omegabf^T\X \in \real$, where  $\omegabf$ is the vector of weights of the composite index. 

In this example, we study the impact of governance on economic growth in the twelve South American countries as measured by per capita \textit{Gross Domestic Product} (GPD) using the World Bank Governance Indicators.\footnote{Governance Indicators and per capita GDP data can be downloaded from  \href{https://info.worldbank.org/governance/wgi}{\textit{Worldwide Governance Indicators}} and \href{https://data.worldbank.org/indicator/NY.GDP.MKTP.KD}{\textit{The World Bank Data}}, respectively.}  
The World Bank considers the following six aggregate indicators of governance that combine the views of a large number of enterprise, citizen and expert survey respondents: 
control of corruption ($X_1$); rule of law ($X_2$); regulatory quality ($X_3$);  government effectiveness ($X_4$); political stability ($X_5$);  voice and accountability ($X_6$). They are standardized to have mean zero and  standard deviation one, with values from  -2.5 to 2.5, approximately, where  higher values correspond to better governance. 
All six  are highly positively correlated, and are all positively correlated with the per capita GDP; i.e., economic growth is positively associated with better governance indicators.

Our aim is to build a {\it Composite Governance index} (CG) to predict $Y$, the logarithm of per capita \textit{Gross Domestic Product} (GPD), measured in  2010 US dollars, over the period 1996 to 2018.  Using the set of governance indicator variables, we start by constructing the CG index via standard Principal Component Analysis (PCA) and Principal Fitted Components (PFC) [see Corollary \ref{CoroPFC}] setting  
   $d=1$ and $\f_y=\log(GDP)$ in \eqref{jointlik}. 

In the left panel of  Figure \ref{CGI1}, we plot $\log(GDP)$ versus the CG indexes based on PCA, which is the standard approach  in such index construction \citep{Mazziotta2019}. In the right panel of  Figure \ref{CGI1}, the response is plotted versus the index based on PFC. 
Both plots indicate dependence of the response on the indexes but the nature of relationship is the data pattern is hard to understand. A linear trend appears stronger in the right panel, which is reflected in the better fit of  the linear regression model (black) with $R^2=0.27$ versus 0.17 for PCA. However, the PCA-based index in nonparametric kernel regression (blue)  results in better fit. Using the \texttt{np} \texttt{R} package, the value of the nonparametric version of $R^2$ is 0.32 for the PFC-based CG index, which is much lower than  0.54, the value for the PCA-based index.

\begin{figure}
    \centering
     \makebox{\includegraphics[width=0.45\textwidth]{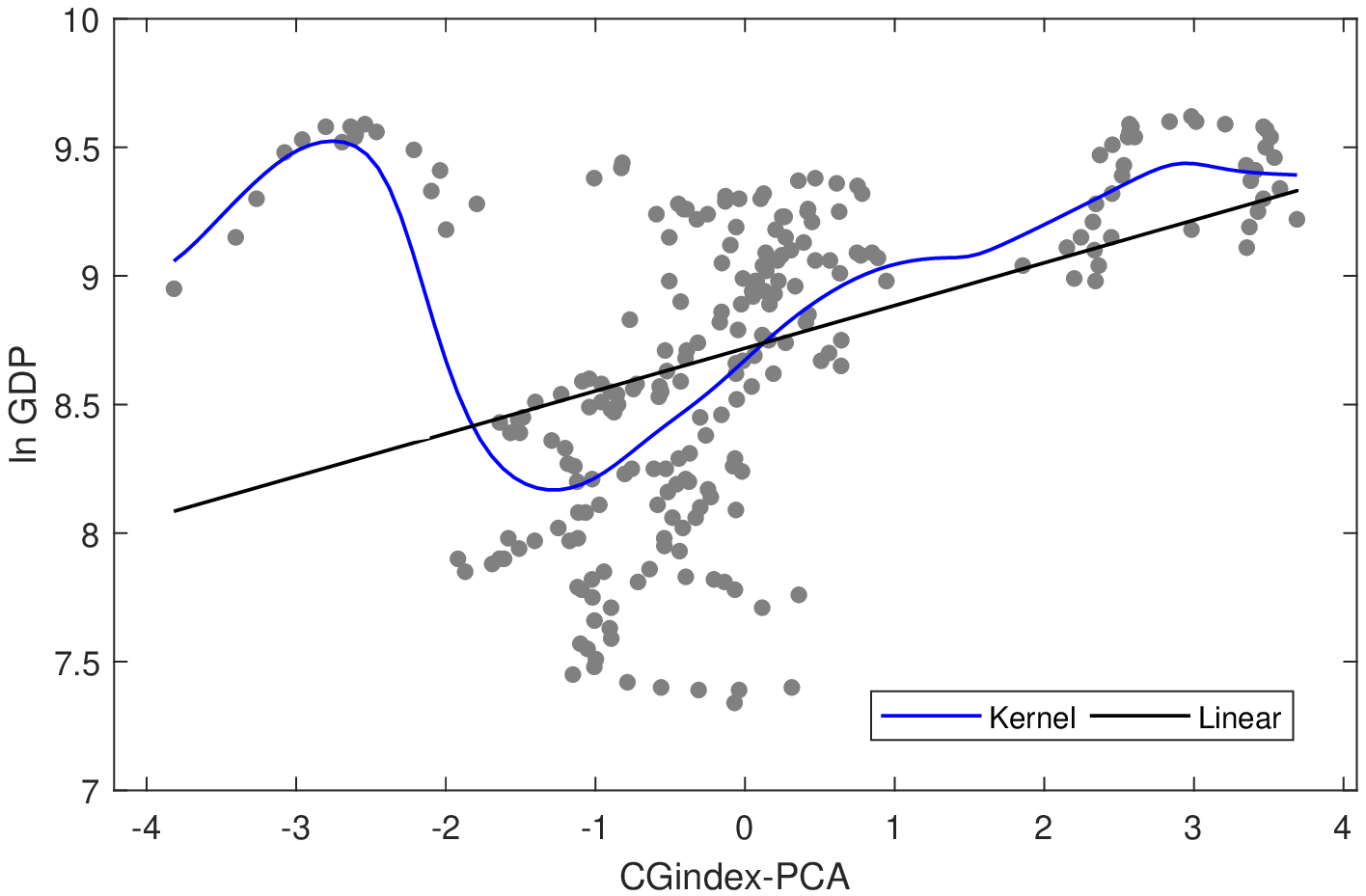}
      \includegraphics[width=0.45\textwidth]{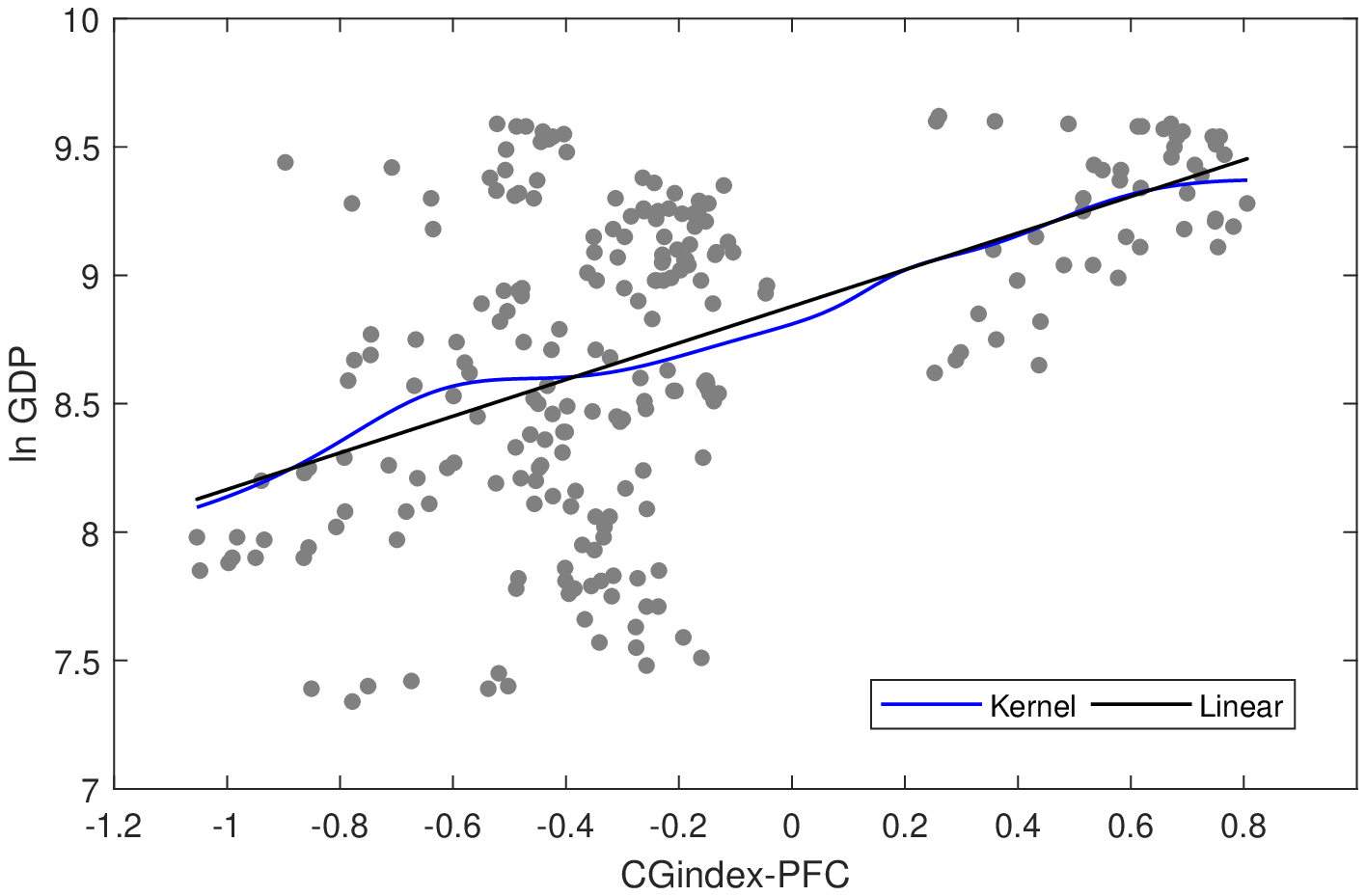}}
    \caption{\label{CGI1}Log of per capita GDP versus Standard \texttt{PCA} and \texttt{PFC} based composite governance indexes.}
\end{figure}

In Figure \ref{CGIcolor1}, we plot $\log(GDP)$ versus the PCA and PFC composite governance indexes by country. The plots indicate that the PFC index gives a much better visualization of the relationship of $\log(GDP)$ within each country, 
suggesting that adjusting the index by country could improve its predictive performance.

\begin{figure}
    \centering
    \makebox{\includegraphics[width=0.45\textwidth]{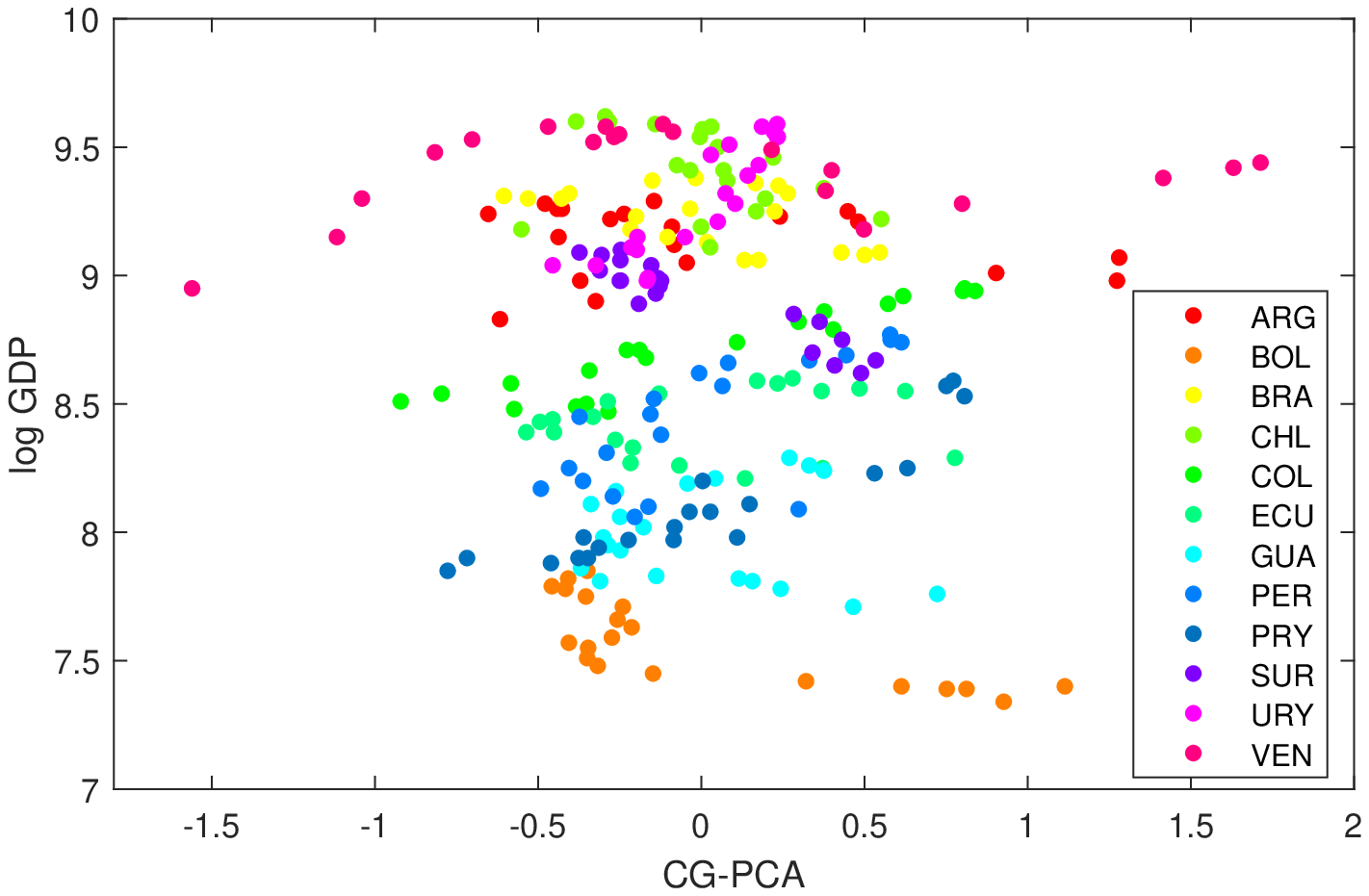}
    \includegraphics[width=0.45\textwidth]{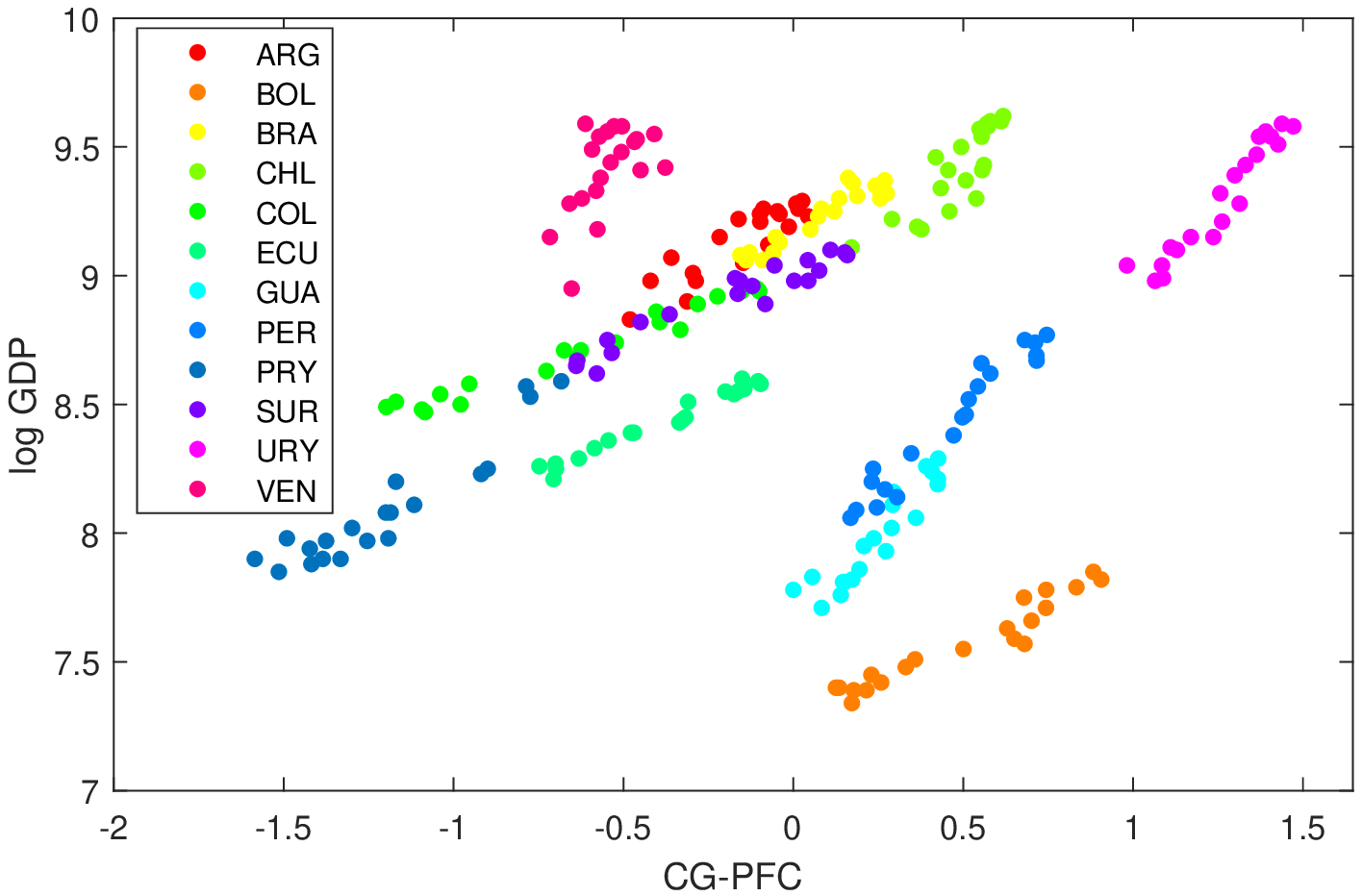}}
    \caption{\label{CGIcolor1} Log of per capita GDP versus standard PCA and standard PFC composite governance indexes by country.}
\end{figure}

We add country effect by introducing eleven binary variables $\Hbf$.
In Figure \ref{CGI3} we plot the log of GDP versus the CG index constructed by PCA for mixed variables ({\sc PCAmix}) 
 in the left panel and by our mixed {\sc optimal SDR} approach  in the right panel. 
Hardly any difference  between the plots in the left panels of Figures \ref{CGI1} and \ref{CGI3} is noticeable. The  {\sc PCAmix} based CG index is very similar to the conventional {\sc PCA} based CG that does not  include country effect, with  $R^2$ equal to 0.17 and 0.61 for the linear and nonparametric models, respectively. Moreover, neither {\sc PCA} based CG index exhibits an easy to understand or model relationship with the response.

In contrast, a very clear and simple   pattern appears in the right panel of  Figure \ref{CGI3}, where the response is plotted versus our {\sc optimal SDR} based index.  The pattern suggests modeling $\log(GDP)$ as a linear function of the GC index.  This is a distinct improvement over {\sc PCA} and  {\sc PCAmix} (left panels of  Figures \ref{CGI1} and \ref{CGI3}) but also the SDR method {\sc PFC}, which does not account for country effect (right panel of  Figure \ref{CGI1}).   As a result, both the linear (black) and the kernel (blue) regression models for the regression of the log per capita GDP on the {\sc optimal SDR} for mixed predictors based CG index have excellent fit with respective $R^2$ values of  0.91 and 0.93.

\begin{figure}
    \centering
    \makebox{\includegraphics[width=0.45\textwidth]{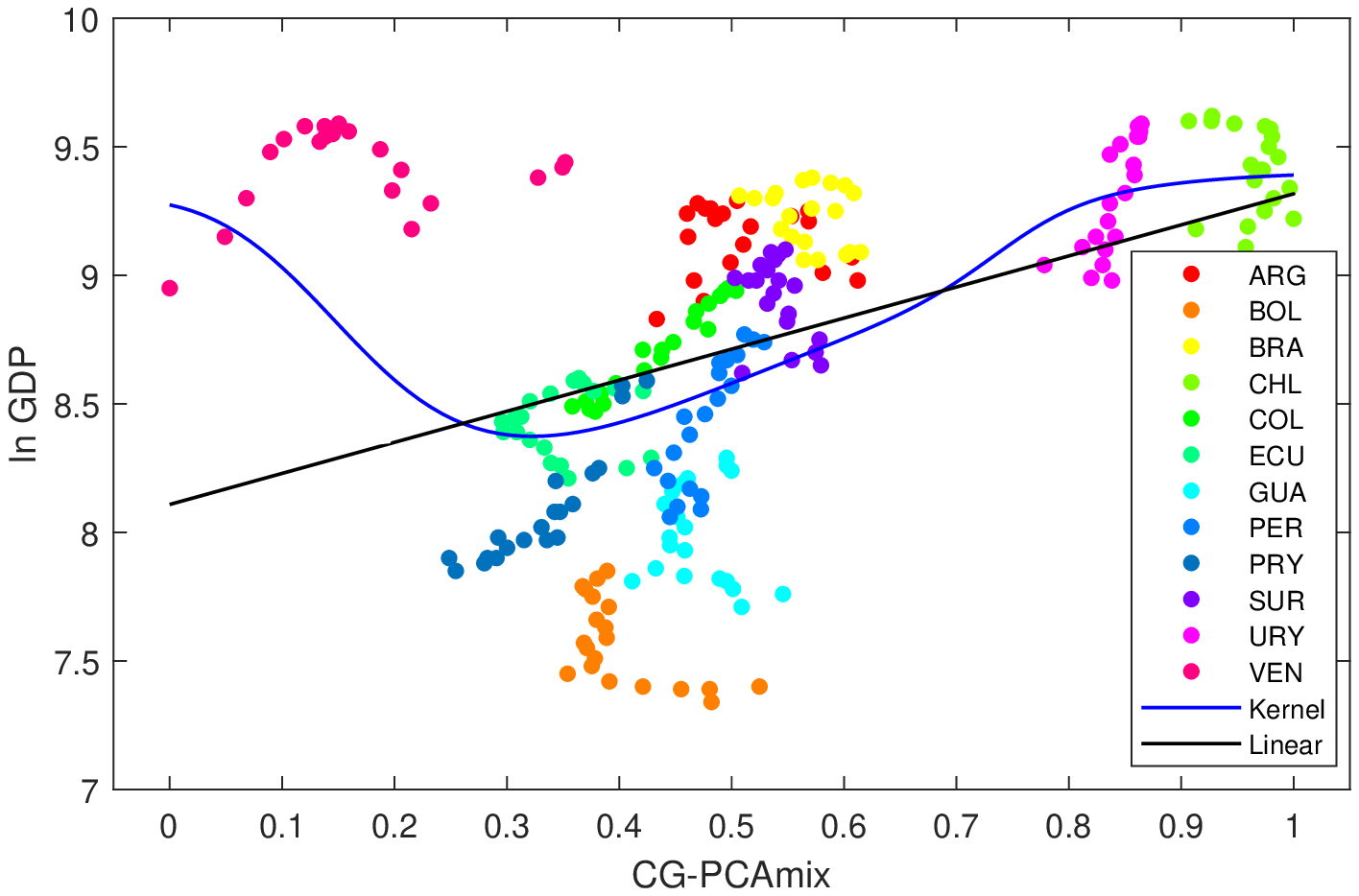}
      \includegraphics[width=0.45\textwidth]{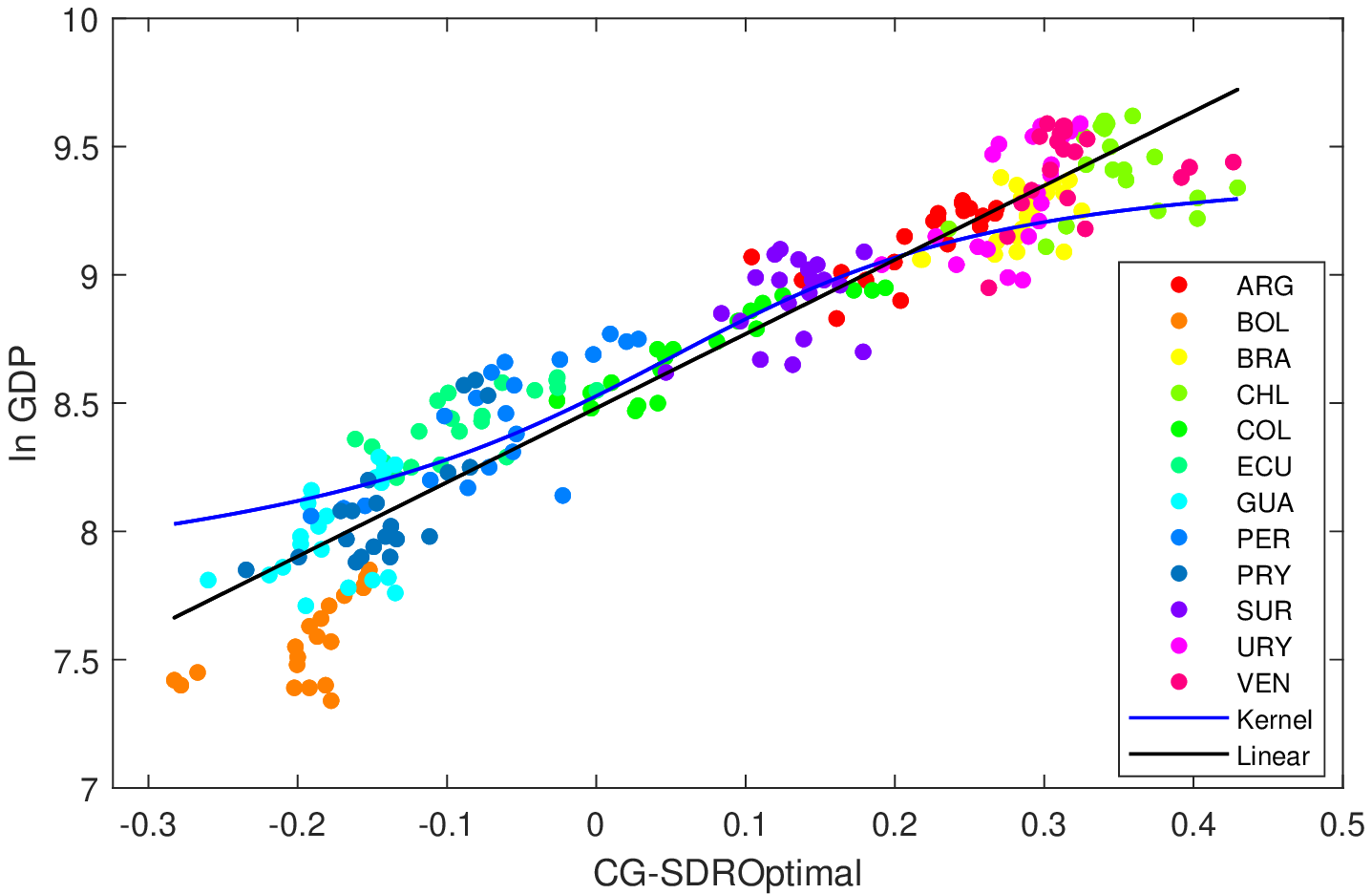}}
    \caption{\label{CGI3}Log of per capita GDP versus Composite Governance index  with country effect.}
    \end{figure}

The average  of the leave-one-out mean square  prediction errors of the linear and kernel regression models in Table \ref{governance}, provides an unbiased measure of predictive performance. The logarithm of the per capita GDP is regressed on the unsupervised CG indexes, constructed by {\sc PCA} using only continuous predictors ($\text{PCA}(\X)$) 
and its extension for mixed variables ($\text{PCAmix}(\X,\Hbf)$), and the supervised CG Indexes, constructed by {\sc PFC} only on continuous predictors  ($\text{PFC}(\X$)) and our mixed predictor SDR methods, $\text{\sc Optimal}(\X,\Hbf)$ and $\text{\sc SubOptimal}(\X,\Hbf)$.

The leave-one-out mean squared prediction errors of the supervised {\sc PFC} based CG index are smaller than both {\sc PCA} and {\sc PCAmix} for the  linear model, even though {\sc PFC} does not account for country  effect. Nevertheless, when the kernel regression model is fitted, the {\sc PCA} based index exhibits better performance than {\sc PFC}. 
The dramatic drop in prediction error results from using {\sc optimal} and {\sc sub-optimal SDR}, as it is between 5 to 9 times smaller than the {\sc PCA, PCAmix} and {\sc PFC} errors for both the linear regression and the kernel regression models.

\begin{table}
\caption{\label{governance}Leave-one-out mean squared prediction errors for the per capita log GDP in South-American countries.} 
 \centering
\fbox{
\begin{tabular}{llcc}
  &  & \multicolumn{2}{c} {Predictive Model}\\
Index Type  & Method & Linear & Non-Parametric\\
\midrule
Unsupervised & $\text{PCA}(\X)$ & 0.319 & 0.189\\
 &$\text{PCAmix}(\X,\Hbf)$ & 0.320 & 0.209\\
 \midrule
 Supervised & $\text{PFC}(\X)$ & 0.292 & 0.282\\
 &$\text{\sc SDROptimal}(\X,\Hbf)$ &0.029 & 0.028 \\
 &$\text{\sc SDRSubOptimal}(\X,\Hbf)$ & 0.028 & 0.022\\
\end{tabular}
}
\end{table}

The regularized estimation of the {\sc SDROptimal} reduction  selects all five continuous predictors except for {\it rule of law}. {\it Political stability} and  {\it voice and accountability}  have the highest weights in the CG index. \textit{Rule of law} is the most correlated with four of the other variables, with correlation coefficient values over 0.80. We stipulate that our method drops it as its relationship with GDP is mostly absorbed by the other four. 
The binary variables are all selected. That is, our method finds a significant country effect on GDP.

\section{Discussion}\label{discussion}

Our approach falls within \textit{model-based inverse regression} for sufficient dimension reduction (SDR) (\cite{Cook2007,CookForzani2008,BuraForzani2015,BuraDuarteForzani2016}). Model-based SDR requires knowledge of the family of distributions of the inverse predictors in contrast to moment-based SDR, such as {\sc SIR} \cite{Li1991}, {\sc SAVE} \cite{CookWeisberg1991}, or {\sc DR} \cite{LiWang2007}, that impose conditions on the moments of the marginal distribution of the predictors. Because of this, our approach provides exhaustive identification and statistically \textit{efficient} estimation of sufficient reductions for the conditional distribution of an output given mixed variables that contain all information in the mixed predictors for the output $Y$. 

Furthermore, outside the context of dimension reduction for the forward regression problem of $Y$ on mixed predictors $\Z$, the modeling we use to accommodate the factorization in \eqref{cdf.fact} in developing our SDR methods, is a new  multivariate modeling approach for \textit{response} vectors comprised of mixed variables. 
That is, if one were to only consider the multivariate regression of the mixed vector $\Z=(\X^T,\Hbf^T)^T$ on some other variables, say $\F$, the models we use for the continuous and binary elements of $\Z$ in our development provides a new regression tool for mixed responses. Specifically, since the joint distribution of $\Z \mid \F$ belongs to the exponential family \eqref{genExpFam}, our approach yields \textit{sufficient statistics} for the unknown natural parameters $\varthetabf$ in \eqref{naturalpar}, as well as optimal (efficient) maximum likelihood  estimators, in a similar manner to generalized linear modeling for univariate responses.

\acks{EB would like to acknowledge support for this project
from the Austrian Science Fund (FWF P 30690-N35) and the Vienna Science and Technology Fund (WWTF ICT19-018).}


\newpage

\section*{Appendix A. Proofs and Derivations for Section \ref{SufRed}}\label{app:A}

\subsection*{Derivation of Eqn. (\ref{genExpFam})}\label{A}

From Eqn. (\ref{jointlik}), the density $f(\X,\Hbf|Y=y)$, up to the  constant $1/\sqrt{2\pi}$, equals 
\begin{align*}
& \exp \bigg\{-\frac{1}{2} \Big((\X - \muxbar) - \A \f_y - \betabf (\Hbf-\muhbar)\Big)^T \Deltabf^{-1} \Big((\X - \muxbar) - \A \f_y - \betabf (\Hbf-\muhbar) \Big)  \\ & \hspace{4cm}
+ \vech^T(\Hbf \Hbf^T)\left(\taubf_0 + \taubf\f_y\right) +\frac{1}{2} \log(|\Deltabf|^{-1}) - \log(G(\Gammabf_y)) \bigg\}. 
\end{align*}
After some algebra and rearrangement of terms we obtain
\begin{equation*}
f(\X,\Hbf|Y=y)= h(\X,\Hbf) \exp\left(\T^T(\X,\Hbf)\etabf_y    - \psi(\etabf_y )\right),
\end{equation*}
with
$h(\X,\Hbf) = (2\pi)^{-1/2}$, 
\begin{align}\label{eq1}
\T^T(\X,\Hbf)\etabf_y &=  \X^T\Deltabf^{-1}\muxbar  - \X^T\Deltabf^{-1}\betabf \muhbar + \X^T\Deltabf^{-1}\A\f_y  \notag\\
&\hspace{1cm}  - \Hbf^T \betabf^T\Deltabf^{-1}\muxbar + \Hbf^T \betabf^T\Deltabf^{-1}\betabf \muhbar - \Hbf^T \betabf^T\Deltabf^{-1}\A \f_y  \notag\\
&\hspace{1cm}  - \frac{1}{2}\X^T\Deltabf^{-1}\X +\X^T\Deltabf^{-1} \betabf \Hbf  \notag\\
& \hspace{1cm}  - \frac{1}{2}\Hbf^T \betabf^T\Deltabf^{-1}\betabf \Hbf  
+ \vech^T(\Hbf \Hbf^T) \taubf_0 + \vech^T(\Hbf \Hbf^T)\taubf\f_y,
\end{align}
and 
\begin{align}\label{psii}
  \psi(\etabf_y) &= \frac{1}{2}\muxbar^T\Deltabf^{-1} \muxbar +\frac{1}{2}\f_y^T\A^T\Deltabf^{-1} \A \f_y + \frac{1}{2} \muhbar^T \betabf^T \Deltabf^{-1} \betabf \muhbar \notag \\ &\hspace{1cm} + \muxbar^T\Deltabf^{-1} \A \f_y - \muxbar^T\Deltabf^{-1}\betabf \muhbar- \muhbar^T \beta^T \Deltabf^{-1} \A \f_y \\ & \hspace{1cm}- \frac{1}{2} \log(|\Deltabf|^{-1})+ \log(G(\Gammabf_y)). \notag
\end{align}
Since $\tr(\A^T \B) =  \vecc(\A)^T \vecc(\B)$, $\vecc(\A\B\Cc) = (\Cc^T \otimes \A) \vecc(\B)$ and $\D_q$ in Section \ref{model} is such that $\vecc(\A) = \D_q \vech(\A)$,  (\ref{eq1}) becomes
\begin{align}\label{eq2}
\T^T(\X,\Hbf)\etabf_y &=   \X^T\left(\Deltabf^{-1}\muxbar-  \Deltabf^{-1}\betabf \muhbar
+  (\f_y^T \otimes \I_p) \vecc(\Deltabf^{-1}\A)\right) \notag\\
&\hspace{.3cm} + \Hbf^T \left( -\betabf^T\Deltabf^{-1}\muxbar +   \betabf^T\Deltabf^{-1}\betabf \muhbar -  (\f_y^T \otimes \I_q)  \vecc(\betabf^T\Deltabf^{-1}\A )\right) \notag\\
&
\hspace{.3cm} - \frac{1}{2} (\D_p \D_p^T \vech(\X\X^T))^T \vech(\Deltabf^{-1}) + \vecc(\X\Hbf^T)^T\vecc(\Deltabf^{-1} \betabf) \notag \\
&\hspace{.3cm} +\vech (\Hbf\Hbf^T)^T \left( - \frac{1}{2} \D_q^T \vecc(  \betabf^T\Deltabf^{-1}\betabf) 
+  \taubf_0 + (\f_y^T \otimes \I_{q(q+1)/2}) \vecc( \taubf )\right). \notag
\end{align}
Finally, using the matrices $\J_q$ and $\Lbf_q$ defined in Section \ref{model}, we obtain Eqns (\ref{suffstat}) and (\ref{naturalpar}) from
\begin{align*}
\T^T(\X,\Hbf)\etabf_y &=   \X^T\left(\Deltabf^{-1}\muxbar-  \Deltabf^{-1}\betabf \muhbar
+  (\f_y^T \otimes \I_p) \vecc(\Deltabf^{-1}\A)\right) \\
&\hspace{.3cm} + \Hbf^T \left( -\betabf^T\Deltabf^{-1}\muxbar +   \betabf^T\Deltabf^{-1}\betabf \muhbar -  (\f_y^T \otimes \I_q)  \vecc(\betabf^T\Deltabf^{-1}\A )\right) \\
&\quad - \frac{1}{2} (\D_p \D_p^T \vech(\X\X^T))^T \vech(\Deltabf^{-1})  + \vecc(\X\Hbf^T)^T\vecc(\Deltabf^{-1} \betabf)   \\
& \quad + \Hbf^T   \left( - \frac{1}{2} \Lbf_q \D_q^T \vecc(  \betabf^T\Deltabf^{-1}\betabf)
+  \Lbf_q \taubf_0 + (\f_y^T \otimes \I_{q}) \vecc(\Lbf_q \taubf )\right) \\
&\quad +(\J_q\vech (\Hbf\Hbf^T))^T \left( - \frac{1}{2} \J_q \D_q^T \vecc(  \betabf^T\Deltabf^{-1}\betabf) 
+  \J_q \taubf_0 + (\f_y^T \otimes \I_{k_q}) \vecc(\J_q \taubf )\right)  \\
&=  \X^T \etabf_{y1}+ \Hbf^T \etabf_{y2}  - \frac{1}{2} (\D_p^T \D_p\vecc(\X\X^T))^T \etabf_3 + \vecc(\X\Hbf^T)^T \etabf_4 \\
&\qquad +(\J_q\vech (\Hbf\Hbf^T))^T \etabf_{y5},  
\end{align*}
where $\T(\X,\Hbf)$ is defined in (\ref{suffstat}) and  
\[\etabf_{y1} = \Deltabf^{-1} \muxbar - \Deltabf^{-1} \betabf \mubf_{\Hbf}+ (\f_y^T \otimes \I_p) \vecc(\Deltabf^{-1}\A) =\F_{y1}  \varthetabf_1, \]
with $ \F_{y1} = (\I_p , \f_y^T \otimes \I_p) $, $\varthetabf_1 = (\varthetabf_{10}^T , \varthetabf_{11}^T)^T$, $\varthetabf_{10} =  \Deltabf^{-1} \muxbar - \Deltabf^{-1} \betabf \mubf_{\Hbf}$, $\varthetabf_{11} = \vecc (\Deltabf^{-1}\A )$,
\begin{align*}
\etabf_{y2} &= -\betabf^T   \Deltabf^{-1}\muxbar + \betabf^T \Deltabf^{-1} \betabf \mubf_{\Hbf} -  (\f_y^T\otimes \I_q) \vecc(\betabf^T \Deltabf^{-1} \A)  \\ 
&\qquad -
\frac{1}{2} \Lbf_q {{\D}}^T_q \vecc (\betabf^T \Deltabf^{-1} \betabf)
+ \Lbf_q     \taubf_0 +(\f_y^T \otimes \I_{q}) \vecc(\Lbf_q \taubf )  \\
&= \F_{y2} \varthetabf_2,
\end{align*}
with $ \F_{y2} = (\I_q , \f_y^T \otimes \I_q) $,
$\varthetabf_2 = (\varthetabf_{20}^T , \varthetabf_{21}^T )^T$, $\varthetabf_{20} = -\betabf^T   \Deltabf^{-1}\muxbar  
+ \betabf^T \Deltabf^{-1} \betabf \mubf_{\Hbf}+ \Lbf_q \taubf_0   -
\frac{1}{2} \Lbf_q {{\D_q^T}} \vecc (\betabf^T \Deltabf^{-1} \betabf) $, $\varthetabf_{21} = \vecc(\Lbf_q \taubf - \betabf^T \Deltabf^{-1} \A)$,
\[
\etabf_{3} = \etabf_{3y} = \vech (\Deltabf^{-1}),
\]
\[
\etabf_{4} = \etabf_{4y} = \vecc (\Deltabf^{-1}\betabf),
\]
and
\begin{eqnarray*}
\etabf_{y5}&= &  - \frac{1}{2} \J_q \D_q^T \vecc(  \betabf^T\Deltabf^{-1}\betabf) 
+  \J_q \taubf_0 + (\f_y^T \otimes \I_{k_q}) \vecc(\J_q \taubf )\\
&= &\F_{y5} \varthetabf_5,
\end{eqnarray*}
with $\F_{y5}=(\I_{k_q} , \f_y^T \otimes \I_{k_q})$, $\varthetabf_5= (\varthetabf_{50}^T , \varthetabf_{51}^T)^T$, $\varthetabf_{50} =
- {{\frac{1}{2} \J_q \D^T_q}} \vecc  (\betabf^T \Deltabf^{-1} \betabf ) + \J_q \taubf_0 $
and $\varthetabf_{51} = \vecc (\J_{q} \taubf)$.

By Eqn. (\ref{ising-vech2}),
\[
G(\Gammabf_y) = \sum_H \exp\big[\vech^T(\Hbf \Hbf^T)\left(\taubf_0 + \taubf\f_y\right)\big].
\]
Plugging in matrices $\J_q$, $\Lbf_q$, $\D_q$ and $\Cc_q$, defined in Section \ref{model}, and letting  $\bar{\etabf}_4= \unvecc ({\etabf}_4)$, we obtain
\begin{eqnarray}\label{GdeGy}
G(\Gammabf_y)&=& \sum_H \exp \left[ ( \J_q \Cc_q \vecc (\Hbf \Hbf^T))^T \left(\etabf_{y5 }+    \frac{1}{2} \J_q{{\D}}^T_q {\vecc}(  \bar{\etabf}_4^T (\unvecc (\D_p \etabf_3))^{-1} \bar{\etabf}_4) )\right.\right) \\
&&\left. + \Hbf^T \left(\etabf_{y2}+ \bar\etabf_4^T (\unvecc (\D_p \etabf_3))^{-1}\etabf_{y1} + \frac{1}{2}\Lbf_q {{\D}}_q^T \vecc( \bar{\etabf}_4^T (\unvecc (\D_p \etabf_3))^{-1}\bar{\etabf}_4))\right)\right]. \notag
\end{eqnarray}
Finally,  using the matrix $\D_p$ defined in Section \ref{model}, Eqn. (\ref{psii}) yields
\begin{eqnarray}\label{psiii}
\psi (\etabf_y) &=& \frac{1}{2} \etabf_{y1}^T  (\unvecc (\D_p \etabf_3))^{-1} \etabf_{y1} + \log G(\Gammabf_y) - \frac{1}{2} \log |  \unvecc (\D_p \etabf_3)|   \notag \\
&=& \psi_1 (\etabf_y) + \psi_2(\etabf_y) + \psi_3 (\etabf_y),
\end{eqnarray}
with $G(\Gammabf_y)$ given in (\ref{GdeGy}).

\subsection*{Proof of Theorem \ref{propo1}}\label{proofpropo1}

Since the density of $\X,\Hbf\mid Y$ belongs to the full rank  exponential family (Eqn. (\ref{genExpFam})),  the minimal sufficient reduction for the regression $Y\mid (\X,\Hbf)$ is given by 
\begin{align*}
\R(\X,\Hbf)&=\alphabf_{\mathbf{a}}^T\left(\T(\X,\Hbf)-\E(\T(\X,\Hbf))\right), 
\end{align*}
where $\alphabf_{\mathbf{a}}$ is a basis for $\spc_{\alphabf_{\mathbf{a}}}=\spn\{\etabf_Y - \E({\etabf_Y}), Y\in \mathcal{Y}\}$,  with $\etabf_Y$
given in \eqref{naturalpar} [see  \citet[Thm 1]{BuraDuarteForzani2016}]. Since $\E (\f_Y) = \bm{0}$, applying Eqns. \eqref{naturalpar} and (\ref{varetas}) obtains
\[ \etabf_y -\E (\etabf_y) = \left( \begin{array}{c}
(\f_y^T \otimes \I_p) \vecc (\Deltabf^{-1}\A)\\
(\f_y^T \otimes \I_q)  \vecc(\Lbf_q\taubf - \betabf^T \Deltabf^{-1} \A)\\
0\\
0\\
(\f_y^T \otimes \I_{k_q}) \vecc (\J_q \taubf)\end{array} \right) = 
\left( \begin{array}{c}
   \Deltabf^{-1}\A \f_y \\
 (\Lbf_q\taubf - \betabf^T \Deltabf^{-1} \A) \f_y\\
0\\
0\\
\J_q \taubf \f_y \end{array} \right).
\]
Then, $\spn (\abf)=\spn \left(\etabf_y - \E({\etabf_Y}), y \in  \mathcal{Y} \right)$ with
\[ 
\mathbf{a}= \left( \begin{array}{c} \Deltabf^{-1} \A\\
\Lbf_q \taubf - \betabf^T \Deltabf^{-1} A\\
0\\
0\\
\J_q \taubf \end{array}\right).
\]

\subsection*{Proof of Corollary \ref{CoroPFC}}
If follows from Corollary \ref{optimalSDR} since, in this case,  $\varthetabf_{2,1}=0$ and $\varthetabf_{5,1}=0$.

\subsection*{Proof of Corollary \ref{binonly}}
If follows from Corollary \ref{optimalSDR} since in this case $\varthetabf_{1,1}=0$ and $\varthetabf_{2,1} = \Lbf_q \taubf $.

\subsection*{Proof of Corollary \ref{suboptSDR}}
It suffices to show that $\spn ({\mathbf{b}}) \subset \spn (\alphabf_{\mathbf{c}})$. We can write $\textbf{b}$ as
\[ {\mathbf b} =\left(\begin{array}{cc} \Deltabf \A & \0\\-\betabf^T \Deltabf^{-1} \A &\Lbf_1 \taubf\\
\0 & \J_q\taubf \end{array} \right)\left(\begin{array}{c} \I_r \\ \I_r\end{array}\right) = \mathbf{\widetilde{b}}\left(\begin{array}{c} \I_r \\ \I_r\end{array}\right),
\]
with $\spn (\mathbf{\widetilde{b}}) = \spn (\alphabf_{\mathbf{c}}).$
As a consequence, $\spn ({\mathbf b}) \subset \spn (\alphabf_{\mathbf c})$, and therefore $\R(\X,\Hbf)$ in Eqn. (\ref{suboptSR}) is a sufficient dimension reduction, not necessary minimal. The rest of the corollary immediately follows.


\section*{Appendix B: Proof of  Proposition \ref{AsymptDistn}}

We first derive the asymptotic distribution of  $\widehat{\bb}$ in (\ref{mleb}) 
and prove auxiliary lemmas in Section 
in order to prove Proposition \ref{AsymptDistn}.  

\subsection*{Asymptotic distribution of $\widehat{\bb}$}\label{aaa}
\begin{proposition}\label{PropNormalAsint}
$\sqrt{n}  \vecc(\widehat{\bb} -\bb) \xrightarrow{\mathcal{D}}  \mathcal{N} (\mathbf{0}, \V_{rcl})$ with
$$\V_{rcl}= \W \M \V \M^T \W^T,
$$
as in equation (\ref{vrcl}), where $\M$, $\W$ and $\V$ are defined in Eqns. (\ref{laM}), (\ref{W}) and (\ref{Vinv}), respectively.
\end{proposition}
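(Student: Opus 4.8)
The plan is to reduce the statement to standard maximum-likelihood asymptotics for the exponential family \eqref{genExpFam} by exhibiting $\vecc(\bb)$ as a fixed linear image of the natural parameter $\varthetabf$, after which the conclusion is a linear transformation of an asymptotically normal estimator.

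First I would record the reparametrization. Reading off the blocks in \eqref{bb} and \eqref{mleb}, the three stacked blocks of $\bb$ are exactly $\unvecc(\varthetabf_{1,1})$, $\unvecc(\varthetabf_{2,1})$ and $\unvecc(\varthetabf_{5,1})$. The selection matrix $\M$ in \eqref{laM} is built precisely so that $\M\varthetabf=(\varthetabf_{1,1}^T,\varthetabf_{2,1}^T,\varthetabf_{5,1}^T)^T$, picking out the three $Y$-dependent sub-blocks of $\varthetabf$ and discarding the intercept blocks together with $\varthetabf_3,\varthetabf_4$. The matrix $\W$ in \eqref{W} reassembles these into $\vecc(\bb)$: writing its column blocks as $\I_r\otimes E$ for the $m\times p$, $m\times q$ and $m\times k_q$ block-insertion matrices $E$, the identity $(\I_r\otimes E)\vecc(B)=\vecc(EB)$ gives
\[
\W\,\M\,\varthetabf=\vecc\!\begin{pmatrix}\unvecc(\varthetabf_{1,1})\\ \unvecc(\varthetabf_{2,1})\\ \unvecc(\varthetabf_{5,1})\end{pmatrix}=\vecc(\bb).
\]
The same identity holds with hats, so $\vecc(\widehat{\bb}-\bb)=\W\M(\widehat{\varthetabf}-\varthetabf)$ exactly, for every $n$.

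Next I would establish the asymptotic normality of $\widehat{\varthetabf}$. Because \eqref{genExpFam} is a full-rank exponential family whose natural parameter depends linearly on $\varthetabf$ through $\etabf_y=\F_y\varthetabf$, the per-observation score is $\partial\ell/\partial\varthetabf=\F_y^T(\T(\X,\Hbf)-\E(\T\mid y))$, using $\partial\psi/\partial\etabf_y=\E(\T\mid y)$. Differentiating once more and taking conditional expectations yields the per-observation conditional information $\F_y^T\mathbf{J}\F_y$ with $\mathbf{J}=\partial^2\psi/\partial\etabf_y\partial\etabf_y^T=\var(\T\mid y)$ as in \eqref{deriva}; averaging over $Y$ produces the total information $\V^{-1}=\E(\F_Y^T\mathbf{J}\F_Y)$ of \eqref{Vinv}. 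Standard i.i.d.\ M-estimation theory then gives $\sqrt n(\widehat{\varthetabf}-\varthetabf)\xrightarrow{\mathcal D}\mathcal N(\mathbf 0,\V)$. Applying the constant linear map $\W\M$ to the exact identity above and invoking Slutsky yields
\[
\sqrt n\,\vecc(\widehat{\bb}-\bb)=\W\M\,\sqrt n(\widehat{\varthetabf}-\varthetabf)\xrightarrow{\mathcal D}\mathcal N(\mathbf 0,\ \W\M\V\M^T\W^T)=\mathcal N(\mathbf 0,\V_{rcl}),
\]
which is the assertion with $\V_{rcl}$ as in \eqref{vrcl}.

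The main obstacle is the middle step: $\widehat{\varthetabf}$ is assembled from the separate fits of Section \ref{MLEs} — the exact normal MLE for $\Omegabf$ and the \citet{Chengetal14} conditional-logistic estimator for $\Upsilonbf$ — so I would need to argue that this two-stage estimator attains the joint information $\V^{-1}$, i.e.\ that it is asymptotically equivalent to solving the joint score equations $\sum_i\F_{y_i}^T(\T(\x_i,\hbf_i)-\E(\T\mid y_i))=\mathbf 0$. This efficiency/equivalence verification, together with checking the usual regularity conditions (finite and nonsingular $\V^{-1}$, interchange of differentiation and expectation, consistency), is where the real work lies; the surrounding algebra is routine once the map $\vecc(\bb)=\W\M\varthetabf$ is in hand.
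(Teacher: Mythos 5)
Your proof follows essentially the same route as the paper's: the exact linear identity $\vecc(\bb)=\W\M\varthetabf$ (and likewise with hats), asymptotic normality of $\widehat{\varthetabf}$ with covariance $\V$ equal to the inverse information $\left(\E(\F_Y^T\J\F_Y)\right)^{-1}$ — which is precisely the paper's Lemma \ref{asint} — and then the constant linear map $\W\M$ to transfer normality to $\widehat{\bb}$. The one ``obstacle'' you flag, namely that the two-stage estimator of Section \ref{MLEs} (normal MLE plus the conditional-logistic fit for the Ising part) must be shown to attain the joint information $\V^{-1}$, is not addressed in the paper either: its proof simply declares $\widehat{\varthetabf}$ to be the maximum likelihood estimator and invokes standard MLE asymptotics.
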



\begin{proof}
We rewrite
\begin{align*}
\bb &= \begin{pmatrix} {\Deltabf}^{-1} {\A}\\
\Lbf_q {\taubf} - {\betabf}^T {\Deltabf}^{-1} {\A} \\
\J_q {\taubf} \end{pmatrix} =
  \begin{pmatrix} \unvecc ({\varthetabf}_{1,1}) \\
\unvecc ({\varthetabf}_{2,1})\\
\unvecc ({\varthetabf}_{5,1}) \end{pmatrix},   
\end{align*}
as follows. Let $\widetilde{\bb}=\left(
\varthetabf_{1,1}^T,\varthetabf_{2,1}^T,\varthetabf_{5,1}^T\right)^T$. Then,  $\widetilde{\bb}=\M \varthetabf$, with $\M$ given in (\ref{laM}),
so that
\begin{equation}\label{holas}
\vecc (\bb) = \W \mathbf{\widetilde{b}} = \W \M\varthetabf, 
\end{equation}
with $\W$ defined on (\ref{W}).
Then,  
\begin{equation}\label{bhat}
\vecc (\widehat{\bb}) = \W \M \widehat{\varthetabf}.
\end{equation}
The asymptotic normality of $\widehat{\mathbf{b}}$ follows from the asymptotic normality of $\widehat{\varthetabf}$, which  is derived  in  Lemma \ref{asint}, with
\begin{align*}
\aVar(\sqrt{n} \widehat{\bb}) &= \W \M  \aVar\left(\sqrt{n} \widehat{\varthetabf}\right) \M^T\W^T \\ &= \W \M  \V \M^T\W^T.
\end{align*}
\end{proof}

\begin{lemma}\label{asint}
If $\aVar\left(\sqrt{n} \widehat{\varthetabf}\right) = \V$, then  
$$
\V^{-1} = E \left [ {\mathbf F}_y^T \mathbf{J}{\mathbf F}_y \right], 
$$
as in (\ref{Vinv}), where $\F_y$ is defined in (\ref{naturalpar}) and 
$$
\mathbf{J} = \frac{\partial^2 \psi (\etabf_y)} {\partial \etabf_y \partial \etabf_y^T}
$$
in (\ref{deriva}).
\end{lemma}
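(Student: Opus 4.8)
The plan is to identify $\widehat{\varthetabf}$ as the maximum likelihood estimator in the full-rank exponential family \eqref{genExpFam} under the \emph{linear} reparametrization $\etabf_y=\F_y\varthetabf$ of \eqref{naturalpar}, and then to read off $\V$ as the inverse of the Fisher information matrix. First I would write the conditional log-likelihood contributed by a single observation $(y,\X,\Hbf)$ as
\[
\ell(\varthetabf)=\log h(\X,\Hbf)+\T^T(\X,\Hbf)\,\F_y\varthetabf-\psi(\F_y\varthetabf),
\]
with $\T$ as in \eqref{suffstat}. Differentiating in $\varthetabf$ and applying the chain rule gives the score
\[
\frac{\partial\ell}{\partial\varthetabf}=\F_y^T\!\left(\T(\X,\Hbf)-\frac{\partial\psi}{\partial\etabf_y}\right).
\]
Since $\partial\psi/\partial\etabf_y=\E(\T\mid Y=y)$ by the standard cumulant identity for natural exponential families, the score has mean zero, as required for the usual likelihood theory.

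The key computation is the observed information. The term $\T^T\F_y\varthetabf$ is linear in $\varthetabf$, so its second derivative vanishes and only $\psi$ contributes; a second application of the chain rule yields
\[
-\frac{\partial^2\ell}{\partial\varthetabf\,\partial\varthetabf^T}=\F_y^T\,\frac{\partial^2\psi(\etabf_y)}{\partial\etabf_y\,\partial\etabf_y^T}\,\F_y=\F_y^T\,\mathbf J\,\F_y,
\]
with $\mathbf J$ as in \eqref{deriva}. The crucial observation is that this Hessian depends on the data only through $y$, because both $\mathbf J=\mathbf J(\etabf_y)$ and $\F_y$ are functions of $y$ alone. Consequently the expectation of the observed information over the joint law of $(Y,\X,\Hbf)$ collapses to an expectation over $Y$, giving the Fisher information $\mathcal I(\varthetabf)=\E(\F_y^T\,\mathbf J\,\F_y)$.

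Finally I would invoke the classical asymptotic normality of the maximum likelihood estimator for a full-rank exponential family: under \eqref{genExpFam} together with the usual smoothness and identifiability conditions, $\sqrt n(\widehat\varthetabf-\varthetabf)\xrightarrow{\mathcal D}\mathcal N(\mathbf 0,\mathcal I(\varthetabf)^{-1})$. Hence $\V=\mathcal I(\varthetabf)^{-1}$ and $\V^{-1}=\E(\F_y^T\,\mathbf J\,\F_y)$, which is \eqref{Vinv}.

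The main obstacle is not the differentiation but two points of bookkeeping. First, the reparametrization $\etabf_y=\F_y\varthetabf$ is linear in $\varthetabf$ with a $y$-dependent design $\F_y$; one must track the chain rule carefully and confirm that $\E(\F_y^T\,\mathbf J\,\F_y)$ is nonsingular, which—since $\mathbf J=\operatorname{Var}(\T\mid Y=y)$ is positive definite in the full-rank family—reduces to the map $\varthetabf\mapsto\{\F_y\varthetabf:y\in\mathcal Y\}$ being injective, i.e. to $\F_y$ spanning enough directions as $y$ ranges over $\mathcal Y$. Second, because $\varthetabf$ indexes only the conditional law of $(\X,\Hbf)$ given $Y$, one works throughout with the conditional likelihood, treating $Y$ as effectively ancillary, and must justify that the estimator actually computed in Section~\ref{MLEs} is first-order equivalent to the joint conditional MLE so that its asymptotic variance is indeed $\mathcal I(\varthetabf)^{-1}$.
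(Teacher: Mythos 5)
Your proposal is correct and follows essentially the same route as the paper's own proof: both identify the asymptotic variance of $\sqrt n\,\widehat{\varthetabf}$ with the inverse Fisher information, exploit the linearity of $\etabf_y=\F_y\varthetabf$ so that the sufficient-statistic term drops out of the Hessian and only $\psi$ contributes, and conclude $\V^{-1}=\E\left(\F_y^T\,\mathbf{J}\,\F_y\right)$. Your two closing caveats---nonsingularity of $\E\left(\F_y^T\,\mathbf{J}\,\F_y\right)$ and the first-order equivalence of the two-step estimator of Section~\ref{MLEs} (normal-linear MLE plus conditional logistic fits) to the joint conditional MLE---are precisely the points the paper's proof passes over silently, so flagging them is added care rather than a deviation.
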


\begin{proof}
Since $ \widehat{\varthetabf}$ is the maximum likehood estimator, \[\V = \aVar\left(\sqrt{n} \widehat{\varthetabf}\right) = -\left(E\left[\frac{\partial^2 \log f(\X, \Hbf \mid Y=y)}{\partial \thetabf \partial \thetabf^T}\right]\right)^{-1}\]
We plug in $\etabf_y = \F_y \varthetabf$  (from Eqn. (\ref{naturalpar})) in Eqn. (\ref{genExpFam}) to obtain
\begin{align*}
\log f(\X,\Hbf \mid Y=y)  &= \log h(\X,\Hbf) +  {\mathbf T}^T(\X, \Hbf) \etabf_y - \psi (\etabf_y)\\  &= \log h(\X,\Hbf) + 
{\mathbf T}^T(\X, \Hbf) {\mathbf F}_y \varthetabf - \psi ({\mathbf F}_y \varthetabf).
\end{align*}
Then,
\begin{align*}
\frac{\partial \log f(\X, \Hbf \mid Y=y)}{\partial \vecc^T (\thetabf)} & = 
{\mathbf T}^T(\X, \Hbf)  {\mathbf F}_y - \frac{\partial \psi (\etabf_y)}{\partial \etabf_y^T} {\mathbf F}_y,
\\
\frac{\partial^2 \log f(\X, \Hbf \mid Y=y)}{\partial \vecc(\thetabf ) \vecc^T(\partial \thetabf)} &= -
{\mathbf F}_y^T \frac{\partial^2 \psi (\etabf_y)}{\partial \etabf_y \partial \etabf_y^T}{\mathbf F}_y= -
{\mathbf F}_y^T {\mathbf J}{\mathbf F}_y.
\end{align*}
Therefore $\V^{-1} = \E \left [ \F_y^T \J\F_y \right]$ from which Proposition \ref{asint} follows.

In order to compute $\mathbf{J}$, the first and second derivatives of $\psi (\etabf_y)$ with respect to $\etabf_y$ are required. The computation is carried out in Section \ref{supplement} (Supplementary Material). 
\end{proof}

\subsection*{Auxiliary lemmas for Proposition \ref{AsymptDistn}} \label{auxx}

\begin{lemma}\label{lemmas}
Let $\widehat{\Hbf}= \widehat{\Uu}_1\widehat{{\mathbf{K}}}_1 \widehat{\R}_1^T \R_1{\mathbf{K}}^{-1}$. Then,
\begin{equation*}
\sqrt{n} \vecc (\widehat{\Hbf}-\Uu_1 ) \rightarrow \mathcal{N}( 0, ({\mathbf{K}}^{-1}\R_1^T \otimes \I_m){\mathbf{V}}_{rlc}(\R_1{\mathbf{K}}^{-1} \otimes \I_m)).
\end{equation*}
where $\mathbf{V}_{rlc}$ is defined in Eqn. (\ref{vrcl}), $\widehat{\Uu}_1, \widehat{\mathbf{K}}_1$ and $\widehat{\R}_1$
in Eqn. (\ref{bb}), and ${\Uu}_1, {\mathbf{K}}$ and $\R_1$ in Eqn. (\ref{nec}). 
\end{lemma}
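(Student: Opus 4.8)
The plan is to recognize that $\widehat{\Hbf}$ is nothing but the rank-$d$ truncation $\widehat{\bb}^{(d)} = \widehat{\Uu}_1\widehat{\K}_1\widehat{\R}_1^T$ of $\widehat{\bb}$ from \eqref{bestimator}, post-multiplied by the fixed matrix $\R_1\K^{-1}$; that is, $\widehat{\Hbf} = \widehat{\bb}^{(d)}\R_1\K^{-1}$. First I would observe that the same post-multiplication sends the population matrix $\bb$ to $\Uu_1$: from the singular value decomposition $\bb = \Uu_1\K\R_1^T$ in \eqref{bbb} and $\R_1^T\R_1 = \I_d$, one gets $\bb\R_1\K^{-1} = \Uu_1\K\R_1^T\R_1\K^{-1} = \Uu_1$, so the centering in the statement is exactly the image of $\bb$ under this map. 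Consequently $\widehat{\Hbf} - \Uu_1 = (\widehat{\bb}^{(d)} - \bb)\R_1\K^{-1}$, and the whole task reduces to replacing the truncation $\widehat{\bb}^{(d)}$ by the full-rank estimator $\widehat{\bb}$, whose limiting law is supplied by Proposition \ref{PropNormalAsint}, and then pushing that law through the linear map $\cdot\,\R_1\K^{-1}$.

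For the replacement step I would write $\widehat{\bb} - \widehat{\bb}^{(d)} = \widehat{\Uu}_0\widehat{\K}_0\widehat{\R}_0^T$ using the decomposition \eqref{bbhat}, so that
\begin{equation*}
\widehat{\Hbf} - \Uu_1 = (\widehat{\bb} - \bb)\R_1\K^{-1} - \widehat{\Uu}_0\widehat{\K}_0\widehat{\R}_0^T\R_1\K^{-1},
\end{equation*}
and argue that the second term is $o_p(n^{-1/2})$. Two facts drive this. Since $\bb$ has rank exactly $d$ its trailing singular values are zero, so Weyl's inequality together with the consistency $\widehat{\bb} - \bb = O_p(n^{-1/2})$ (Proposition \ref{PropNormalAsint}) gives $\widehat{\K}_0 = O_p(n^{-1/2})$, hence $\sqrt n\,\widehat{\K}_0 = O_p(1)$. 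The spectral gap $k_d > 0 = k_{d+1}$ makes the leading right singular subspace consistent, i.e.\ $\widehat{\R}_1\widehat{\R}_1^T \to \R_1\R_1^T$, which with $\widehat{\R}_0\widehat{\R}_0^T = \I_r - \widehat{\R}_1\widehat{\R}_1^T$ and $\R_0^T\R_1 = \0$ yields $\widehat{\R}_0^T\R_1 = o_p(1)$. Since $\widehat{\Uu}_0 = O_p(1)$ and $\K^{-1}$ is fixed, the scaled second term is $O_p(1)\cdot o_p(1) = o_p(1)$.

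Finally I would vectorize: using $\vecc(\A\X\B) = (\B^T\otimes\A)\vecc(\X)$ with $\A = \I_m$, $\X = \widehat{\bb} - \bb$, and $\B = \R_1\K^{-1}$,
\begin{equation*}
\sqrt n\,\vecc(\widehat{\Hbf} - \Uu_1) = (\K^{-1}\R_1^T\otimes\I_m)\,\sqrt n\,\vecc(\widehat{\bb} - \bb) + o_p(1),
\end{equation*}
and then invoke $\sqrt n\,\vecc(\widehat{\bb} - \bb)\xrightarrow{\mathcal D}\mathcal N(\0, \V_{rcl})$ from Proposition \ref{PropNormalAsint}, so that Slutsky's theorem and the linear-image rule for Gaussian vectors give the stated normal limit with covariance $(\K^{-1}\R_1^T\otimes\I_m)\V_{rcl}(\R_1\K^{-1}\otimes\I_m)$. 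The one genuinely delicate point I expect to be the main obstacle is the negligibility of the truncation term: it works only because the discarded singular values vanish at rate $n^{-1/2}$ and are multiplied by $\widehat{\R}_0^T\R_1$, which is itself $o_p(1)$ thanks to the orthogonality $\R_0^T\R_1 = \0$ and the spectral gap at the $d$-th singular value; the remainder is routine Kronecker/$\vecc$ algebra and Slutsky.
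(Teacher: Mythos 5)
Your proof is correct and follows essentially the same route as the paper's: the identical decomposition $\widehat{\Hbf}-\Uu_1=(\widehat{\bb}-\bb)\R_1\K^{-1}-\widehat{\Uu}_0\widehat{\K}_0\widehat{\R}_0^T\R_1\K^{-1}$, the same invocation of Proposition \ref{PropNormalAsint} with the $\vecc$/Kronecker identity for the leading term, and Slutsky to finish. The only (immaterial) difference is how the remainder is killed: the paper asserts $\sqrt{n}\,\widehat{\Uu}_0\widehat{\K}_0\widehat{\R}_0^T=O_p(1)$ and uses $\Pb_{\R_1}=\Pb_{\widehat{\R}_1}+O_p(n^{-1/2})$ together with $\widehat{\R}_0^T\widehat{\R}_1=\0$, whereas you justify $\sqrt{n}\,\widehat{\K}_0=O_p(1)$ via Weyl's inequality and need only the subspace consistency $\widehat{\R}_0^T\R_1=o_p(1)$ from the spectral gap --- both arguments give the required negligibility.
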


\begin{proof}
By Eqn. (\ref{nec}), $\bb = {\Uu}_1\mathbf{K} {\R}_1^T$ and by Eqn. (\ref{bb}), $\widehat{\bb}  = \widehat{\Uu}_1 \widehat{{\mathbf{K}}}_1 \widehat{\R}_1^T + \widehat{\Uu}_0 \widehat{{\mathbf{K}}}_0
  \widehat{\R}_0^T$. Then, 
  \begin{align*}
 \widehat{\Hbf} -\Uu_1 &= \widehat{\Uu}_1\widehat{{\mathbf{K}}}_1 \widehat{\R}_1^T \R_1{\mathbf{K}}^{-1} -\Uu_1  \\
 &=  \widehat{\bb}\R_1{\mathbf{K}}^{-1}  - \widehat{\Uu}_0 \widehat{{\mathbf{K}}}_0 \widehat{\R}_0^T \R_1{\mathbf{K}}^{-1} -\Uu_1 \\
 &= (\widehat{\bb} - \bb)\R_1{\mathbf{K}}^{-1} - \widehat{\Uu}_0 \widehat{{\mathbf{K}}}_0 \widehat{\R}_0^T \R_1{\mathbf{K}}^{-1}.
 \end{align*}
Thus,
\begin{align}\label{esta}
\sqrt{n}\vecc(\widehat{\Hbf} -\Uu_1) &= \sqrt{n}\vecc((\widehat{\bb} - \bb)\R_1{\mathbf{K}}^{-1} - \widehat{\Uu}_0 \widehat{{\mathbf{K}}}_0 \widehat{\R}_0^T \R_1{\mathbf{K}}^{-1}) \notag \\ &=  \sqrt{n}(\mathbf{K}^{-1}\R_1^T \otimes \I_m) \vecc (\widehat{\bb}-\bb)  - \vecc(\widehat{\Uu}_0 \widehat{{\mathbf{K}}}_0 \widehat{\R}_0^T \R_1{\mathbf{K}}^{-1}).
\end{align}
From Proposition \ref{PropNormalAsint} we have 
\[
\sqrt{n}  \vecc(\widehat{\bb} -\bb) \xrightarrow{\mathcal{D}}  \mathcal{N} (\mathbf{0}, \V_{rcl}),
\]
so that
\begin{equation}\label{yesta}
\sqrt{n}(\mathbf{K}^{-1}\R_1^T \otimes \I_m) \vecc (\widehat{\bb}-\bb) \xrightarrow{\mathcal{D}}  \mathcal{N} (\mathbf{0}, \Sigmabf_\Uu),
\end{equation}
with $\Sigmabf_\Uu=({\mathbf{K}}^{-1}\R_1^T \otimes \I_k) \V_{rcl}({\mathbf{K}}^{-1}\R_1^T \otimes \I_k)^T = ({\mathbf{K}}^{-1}\R_1^T \otimes \I_k)\V_{rcl}(\R_1{\mathbf{K}}^{-1} \otimes \I_k)$.

Since $\sqrt{n} \left(\widehat{\Uu}_0 \widehat{{\mathbf{K}}}_0  \widehat{\R}_0^T \right)=O_p(1)$  and $\Pb_{\R_1}=\left(\Pb_{\widehat{\R}_1}+O_p(n^{-1/2})\right)$, we get
\begin{align*}
  \sqrt{n} (\widehat{\Uu}_0 \widehat{{\mathbf{K}}}_0  \widehat{\R}_0^T\R_1 {\mathbf{K}}^{-1} )&= \sqrt{n} (\widehat{\Uu}_0 \widehat{{\mathbf{K}}}_0  \widehat{\R}_0^T ) \Pb_{\R_1} \R_1 {\mathbf{K}}^{-1}\\ 
 &= \sqrt{n} \left(\widehat{\Uu}_0 \widehat{{\mathbf{K}}}_0  \widehat{\R}_0^T \right) \left(\Pb_{\widehat{\R}_1}+O_p(n^{-1/2}) \right)\R_1 {\mathbf{K}}^{-1} \\ 
   &= \sqrt{n} \left(\widehat{\Uu}_0 \widehat{{\mathbf{K}}}_0  \widehat{\R}_0^T \right) O_p(n^{-1/2})\R_1 {\mathbf{K}}^{-1} \\ &= O_p (n^{-1/2}),
\end{align*}
where we use  $\widehat{\R}_0^T \widehat{\R}_1=\mathbf{0}$. As a consequence, $\sqrt{n}
\vecc(\widehat{\Uu}_0 \widehat{\K}_0  \widehat{\R}_0^T\R_1 {\mathbf{K}}^{-1} ) \rightarrow
\mathbf{0}$ in probability, which, together with (\ref{yesta}) in (\ref{esta}), obtain the result. 
\end{proof}

\begin{lemma}\label{auxi}
Let $\Gammabf$ be a matrix of dimension $p\times d$ of full rank $d$ with $d \leq p$,  and let $\Pb_{\Gammabfs}$ be the orthogonal projection onto the column space  of $\Gammabf$ and $\Q_{\Gammabfs}=\I-\Pb_{\Gammabfs}$. Also, let $\K_{pm} \in {\mathbb R}^{pm \times pm}$ be the unique  matrix such that, for any symmetric $p\times m$ matrix $\A$,  $\vecc(\A^T) = \K_{pm} \vecc(\A)$.
Then, 
\begin{equation}\label{uno}
\frac{\partial{\Pb_{\Gammabfs}}}{\partial \vecc^{T}(\Gammabf)} =
(\I_{p^2}+ \K_{pp})(\Gammabf(\Gammabf^{T} \Gammabf)^{-1} \otimes
\Q_{\Gammabfs}).
\end{equation}
\end{lemma}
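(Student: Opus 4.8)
The plan is to obtain \eqref{uno} by computing the matrix differential of $\Pb_{\Gammabfs} = \Gammabf(\Gammabf^T\Gammabf)^{-1}\Gammabf^T$, vectorizing it, and reading off the Jacobian. First I would apply the product rule together with the differential of a matrix inverse, $\mathrm{d}(\Gammabf^T\Gammabf)^{-1} = -(\Gammabf^T\Gammabf)^{-1}\,\mathrm{d}(\Gammabf^T\Gammabf)\,(\Gammabf^T\Gammabf)^{-1}$ with $\mathrm{d}(\Gammabf^T\Gammabf) = (\mathrm{d}\Gammabf)^T\Gammabf + \Gammabf^T\,\mathrm{d}\Gammabf$. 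Collecting the four resulting terms and repeatedly using $\Gammabf(\Gammabf^T\Gammabf)^{-1}\Gammabf^T = \Pb_{\Gammabfs}$ and $\I - \Pb_{\Gammabfs} = \Q_{\Gammabfs}$, the expression should collapse to the symmetric form
\begin{equation*}
\mathrm{d}\Pb_{\Gammabfs} = \Q_{\Gammabfs}(\mathrm{d}\Gammabf)(\Gammabf^T\Gammabf)^{-1}\Gammabf^T + \Gammabf(\Gammabf^T\Gammabf)^{-1}(\mathrm{d}\Gammabf)^T\Q_{\Gammabfs},
\end{equation*}
in which the second summand is exactly the transpose of the first, as it must be since $\Pb_{\Gammabfs}$ is symmetric.

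Next I would vectorize each summand via $\vecc(\A\B\Cc) = (\Cc^T \otimes \A)\vecc(\B)$. The first term gives $(\Gammabf(\Gammabf^T\Gammabf)^{-1} \otimes \Q_{\Gammabfs})\vecc(\mathrm{d}\Gammabf)$ directly, using the symmetry of $(\Gammabf^T\Gammabf)^{-1}$ and of $\Q_{\Gammabfs}$. For the second term, vectorization produces $(\Q_{\Gammabfs} \otimes \Gammabf(\Gammabf^T\Gammabf)^{-1})\vecc((\mathrm{d}\Gammabf)^T)$, and I would rewrite $\vecc((\mathrm{d}\Gammabf)^T) = \K_{pd}\vecc(\mathrm{d}\Gammabf)$ with $\K_{pd}$ the commutation matrix for $p\times d$ matrices. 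The commutation identity $\K_{pp}(\A \otimes \B) = (\B \otimes \A)\K_{pd}$, applied with $\A = \Gammabf(\Gammabf^T\Gammabf)^{-1}$ and $\B = \Q_{\Gammabfs}$, converts the second term into $\K_{pp}(\Gammabf(\Gammabf^T\Gammabf)^{-1} \otimes \Q_{\Gammabfs})\vecc(\mathrm{d}\Gammabf)$. Adding the two pieces yields $\vecc(\mathrm{d}\Pb_{\Gammabfs}) = (\I_{p^2} + \K_{pp})(\Gammabf(\Gammabf^T\Gammabf)^{-1} \otimes \Q_{\Gammabfs})\vecc(\mathrm{d}\Gammabf)$, so the Jacobian $\partial\Pb_{\Gammabfs}/\partial\vecc^T(\Gammabf)$ is the matrix multiplying $\vecc(\mathrm{d}\Gammabf)$, which is precisely \eqref{uno}.

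The main obstacle is the bookkeeping of the commutation matrices, made delicate by the fact that $\Gammabf$ is rectangular ($p\times d$ with $d\le p$): one must keep the square $\K_{pp}$ that appears in the final formula separate from the rectangular $\K_{pd}$ arising from $\vecc((\mathrm{d}\Gammabf)^T)$, and invoke the swap identity with the correct row/column indices so that the two summands genuinely align into the factor $\I_{p^2}+\K_{pp}$. Everything else is a routine differential computation; the only other point requiring care is verifying the cancellations in the first step that replace $\I-\Pb_{\Gammabfs}$ by $\Q_{\Gammabfs}$.
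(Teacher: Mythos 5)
Your proof is correct, and it reaches \eqref{uno} by a cleaner route than the paper's. The paper works entirely at the Jacobian (vectorized) level: it applies the product rule for matrix derivatives to $\Pb_{\Gammabfs}=\Gammabf\,[(\Gammabf^T\Gammabf)^{-1}\Gammabf^T]$, computes the inner Jacobian $\partial\vecc((\Gammabf^T\Gammabf)^{-1}\Gammabf^T)/\partial\vecc^T(\Gammabf)$ via the derivative-of-an-inverse rule, and then grinds through several lines of Kronecker algebra carrying three different commutation matrices ($\K_{dd}$, $\K_{pd}$, $\K_{pp}$) until the terms regroup into $(\I_{p^2}+\K_{pp})(\Gammabf(\Gammabf^T\Gammabf)^{-1}\otimes\Q_{\Gammabfs})$. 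You instead work with matrix differentials first: the cancellations produced by $\Gammabf(\Gammabf^T\Gammabf)^{-1}\Gammabf^T=\Pb_{\Gammabfs}$ happen at the matrix level, collapsing $\mathrm{d}\Pb_{\Gammabfs}$ to the symmetric two-term expression $\Q_{\Gammabfs}(\mathrm{d}\Gammabf)(\Gammabf^T\Gammabf)^{-1}\Gammabf^T+\Gammabf(\Gammabf^T\Gammabf)^{-1}(\mathrm{d}\Gammabf)^T\Q_{\Gammabfs}$ before any vectorization, so that only one commutation identity, $(\Q_{\Gammabfs}\otimes\Gammabf(\Gammabf^T\Gammabf)^{-1})\K_{pd}=\K_{pp}(\Gammabf(\Gammabf^T\Gammabf)^{-1}\otimes\Q_{\Gammabfs})$, is ever needed. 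The substance (product rule, inverse rule, commutation matrices) is the same, but your organization makes the appearance of $\Q_{\Gammabfs}$ transparent and compresses the bookkeeping that occupies most of the paper's computation into a single swap; your application of the rectangular commutation identity with the factors $\Gammabf(\Gammabf^T\Gammabf)^{-1}$ (of size $p\times d$) and $\Q_{\Gammabfs}$ (of size $p\times p$) uses the correct index conventions, so the two summands genuinely align into the factor $\I_{p^2}+\K_{pp}$ and the argument goes through as written.
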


\begin{proof}
We will use the following two identities.
\begin{itemize}
\item[(i)] Let $\X$ be a matrix and $\F(\X): m\times p$ and $\Gg(\X): p\times q$ differentiable matrix valued functions of $\X$. Then,
\begin{equation}\label{derivadaproducto}
 \frac{\partial \vecc[\F(\X)\Gg(\X)]}{\partial \vecc^T(\X)} = (\Gg^T \otimes \I_m)\frac{\partial \vecc[\F(\X)]}{\partial \vecc^T(\X)} + (\I_q \otimes \F) \frac{\partial \vecc[\Gg(\X)]}{\partial \vecc^T(\X)}.
\end{equation}
\item[(ii)] Let $\F(\X)=\X^{T}$ and $\Gg(\X)=\X$ with $\X: p\times q$. By (\ref{derivadaproducto}),  \begin{eqnarray}\label{deriv1}
\frac{\partial \vecc(\X^T\X)}{\partial^T\vecc(\X)}&=& (\I_{q^2} + \K_{qq}) (\I_{q}\otimes \X^T). \\ \label{deriv2} \frac{\partial
\vecc(\X^T\X)^{-1}}{\partial^T\vecc(\X)}&=& - ((\X^T\X)^{-1}\otimes
(\X^T\X)^{-1})) \frac{\partial \vecc(\X^T\X)}{\partial^T\vecc(\X)}.\notag
\end{eqnarray}
\end{itemize}

Applying (\ref{derivadaproducto}) yields   
\begin{align*}
\frac{\partial \vecc\Pb_{\Gammabfs}}{\partial \vecc^T (\Gammabf)} & =  \frac{\partial \vecc(\Gammabf (\Gammabf^T\Gammabf)^{-1}\Gammabf^T)}{\partial \vecc^T(\Gammabf)}\\ 
 &  = (\Gammabf(\Gammabf^T\Gammabf)^{-1} \otimes \I_p ) \frac{\partial \vecc(\Gammabf)}{\partial \vecc^T (\Gammabf)} + (\I_p\otimes \Gammabf) \frac{\partial \vecc((\Gammabf^T\Gammabf)^{-1}\Gammabf^T)}{\partial \vecc^T (\Gammabf)}\\ 
 &= (\Gammabf(\Gammabf^T\Gammabf)^{-1} \otimes \I_p ) + (\I_p\otimes \Gammabf) \frac{\partial \vecc((\Gammabf^T\Gammabf)^{-1}\Gammabf^T)}{\partial \vecc^T (\Gammabf)}.
\end{align*}
Let 
\[
\Hbf=\frac{\partial \vecc((\Gammabf^T\Gammabf)^{-1}\Gammabf^T)}{\partial \vecc^T
(\Gammabf)}.
\]
Using (\ref{derivadaproducto}),
(\ref{deriv1}) and (\ref{deriv2}), we get
\begin{eqnarray*}
\Hbf&=& (\Gammabf\otimes \I_d) \frac{\partial
\vecc(\Gammabf^T\Gammabf)^{-1}}{\partial \vecc^T (\Gammabf)}+(\I_p
\otimes (\Gammabf^T\Gammabf)^{-1})\K_{pd}\\ &=&
-(\Gammabf\otimes \I_d) ((\Gammabf^T\Gammabf)^{-1}\otimes
(\Gammabf^T\Gammabf)^{-1}))(\I_{d^2} + \K_{dd}) (\I_{d}\otimes
\Gamma^T)+(\I_p \otimes (\Gammabf^T\Gammabf)^{-1})\K_{pd}.
\end{eqnarray*}
Then,
\begin{eqnarray*} 
\frac{\partial \vecc\Pb_{\Gammabfs}}{\partial \vecc^T (\Gammabf)} &=&
(\Gammabf(\Gammabf^T\Gammabf)^{-1} \otimes \I_p ) + (\I_p\otimes
\Gammabf) \times  \\
&\quad& \left[-(\Gammabf\otimes \I_d) ((\Gammabf^T\Gammabf)^{-1}\otimes
(\Gammabf^T\Gammabf)^{-1})(\I_{d^2} + \K_{dd}) (\I_{d}\otimes
\Gammabf^T)+(\I_p \otimes
(\Gammabf^T\Gammabf)^{-1})\K_{pd}\right] \\
&=&(\Gammabf(\Gammabf^T\Gammabf)^{-1} \otimes \I_p)+ (\I_p
\otimes\Gammabf(\Gammabf\Gammabf)^{-1}){\mathbf{K}}_{pd}-
(\Gammabf(\Gammabf^T\Gammabf)^{-1}\otimes
\Gammabf(\Gammabf^T\Gammabf)^{-1}\Gammabf^T)\\  & &-
(\Gammabf(\Gammabf^T\Gammabf)^{-1}\otimes
\Gammabf(\Gammabf^T\Gammabf)^{-1}){\mathbf{K}}_{dd}(\I_d \otimes \Gammabf^T)\\
 &=& (\I_{p^2}+ {\mathbf{K}}_{pp})
(\Gammabf(\Gammabf^T\Gammabf)^{-1}\otimes
\I_p)-(\Gammabf(\Gammabf^T\Gammabf)^{-1}\otimes
\Gammabf(\Gammabf^T\Gammabf)^{-1}\Gammabf^T)\\ &
&-(\Gammabf(\Gammabf^T\Gammabf)^{-1} \otimes
\Gammabf(\Gammabf^T\Gammabf)^{-1})(\Gammabf^T\otimes \I_d){\mathbf{K}}_{pd}\\
 &=& (\I_{p^2}+ {\mathbf{K}}_{pp})
(\Gammabf(\Gammabf^T\Gammabf)^{-1}\otimes \I_p)- (\I_{p^2}+
{\mathbf{K}}_{pp})(\Gammabf(\Gammabf^T\Gammabf)^{-1}\otimes \Pb_{\Gammabfs})\\
 &=& (\I_{p^2}+ {\mathbf{K}}_{pp}) (\Gammabf(\Gammabf^T\Gammabf)^{-1}
\otimes \I_p-\Pb_{\Gammabfs})\\ 
&=& (\I_{p^2}+ {\mathbf{K}}_{pp}) (\Gammabf(\Gammabf^T\Gammabf)^{-1}
\otimes \Q_{\Gammabfs}).
\end{eqnarray*}
\end{proof}

\begin{lemma}\label{lema: Proyeccion}
Suppose the two matrices  $\widehat{\Gammabf}$ and $\Gammabf$ are of order $p\times d$ with $d \leq p$ with  $\Gammabf$ of full rank  $d$. Assume that $\widehat{\Gammabf}$ is asymptotically normal with
\begin{equation*}
  \sqrt{n} \vecc (\widehat{\Gammabf}- \Gammabf)  \xrightarrow{\mathcal{D}} \mathcal{N}( 0,   \V).
\end{equation*}
Then, $\sqrt{n} \vecc (\Pb_{\widehat{\Gammabfs}} - \Pb_{\Gammabfs})$ is asymptotically normal with mean $\0$ and variance
\begin{equation*}
  (\I_{p^2} + \K_{pp})(\Gammabf (\Gammabf^T\Gammabf)^{-1} \otimes \Q_{\Gammabfs})\V ((\Gammabf^T\Gammabf)^{-1}\Gammabf^T\otimes \Q_{\Gammabfs})(\I_{p^2} + {\mathbf{K}}_{pp}).
\end{equation*}
\end{lemma}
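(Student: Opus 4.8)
The plan is to treat Lemma~\ref{lema: Proyeccion} as a direct application of the multivariate delta method to the matrix map $\Gammabf \mapsto \Pb_{\Gammabfs} = \Gammabf(\Gammabf^T\Gammabf)^{-1}\Gammabf^T$, read through the $\vecc$ operator. Because $\Gammabf$ has full column rank $d$, the Gram matrix $\Gammabf^T\Gammabf$ is invertible, so $\vecc(\Pb_{\Gammabfs})$ is a continuously differentiable function of $\vecc(\Gammabf)$ on a neighborhood of $\Gammabf$. Writing $\mathbf{D}_{\Gammabf} = \partial \vecc(\Pb_{\Gammabfs})/\partial\vecc^T(\Gammabf)$ for its Jacobian, the hypothesis $\sqrt{n}\,\vecc(\widehat{\Gammabf}-\Gammabf)\xrightarrow{\mathcal{D}}\mathcal{N}(\0,\V)$ then yields, by the delta method,
\[
\sqrt{n}\,\vecc(\Pb_{\widehat{\Gammabfs}}-\Pb_{\Gammabfs}) \xrightarrow{\mathcal{D}} \mathcal{N}\!\left(\0,\ \mathbf{D}_{\Gammabf}\,\V\,\mathbf{D}_{\Gammabf}^T\right).
\]

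Next I would substitute the closed form of the Jacobian already computed in Lemma~\ref{auxi}, namely
\[
\mathbf{D}_{\Gammabf} = (\I_{p^2}+\K_{pp})\big(\Gammabf(\Gammabf^T\Gammabf)^{-1}\otimes\Q_{\Gammabfs}\big),
\]
and then form $\mathbf{D}_{\Gammabf}^T$. Using $(\A\otimes\B)^T=\A^T\otimes\B^T$ together with the symmetry of $\Q_{\Gammabfs}$ and of $(\Gammabf^T\Gammabf)^{-1}$, the Kronecker factor transposes to $((\Gammabf^T\Gammabf)^{-1}\Gammabf^T\otimes\Q_{\Gammabfs})$; and since the commutation matrix is symmetric in the square case, $(\I_{p^2}+\K_{pp})^T=\I_{p^2}+\K_{pp}$. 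Assembling $\mathbf{D}_{\Gammabf}\V\mathbf{D}_{\Gammabf}^T$ then gives
\[
(\I_{p^2}+\K_{pp})(\Gammabf(\Gammabf^T\Gammabf)^{-1}\otimes\Q_{\Gammabfs})\,\V\,((\Gammabf^T\Gammabf)^{-1}\Gammabf^T\otimes\Q_{\Gammabfs})(\I_{p^2}+\K_{pp}),
\]
which is exactly the asserted variance.

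Mechanically this is routine, and the heavy lifting has already been done in Lemma~\ref{auxi}, which furnishes the derivative of the projection operator in closed form. The only point that genuinely requires care is the justification of the delta step: one must confirm that $\Pb_{\Gammabfs}$ is differentiable at $\Gammabf$ (secured by the full-rank hypothesis, which keeps $\Gammabf^T\Gammabf$ bounded away from singularity) and that the expression in Lemma~\ref{auxi} is indeed the Jacobian of $\vecc\circ\Pb_{\Gammabfs}$. I do not anticipate any substantive obstacle beyond these bookkeeping checks and the transpose identities for Kronecker products and commutation matrices.
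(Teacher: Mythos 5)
Your proposal is correct and follows essentially the same route as the paper's own proof: invoke Lemma~\ref{auxi} for the Jacobian of $\Gammabf \mapsto \Pb_{\Gammabfs}$ and apply the delta method to obtain the asymptotic variance $\nabla g(\Gammabf)\,\V\,\nabla^T g(\Gammabf)$. The only difference is cosmetic --- you spell out the transpose computation via $(\A\otimes\B)^T = \A^T\otimes\B^T$ and the symmetry of $\K_{pp}$, $\Q_{\Gammabfs}$, and $(\Gammabf^T\Gammabf)^{-1}$, which the paper leaves implicit.
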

\begin{proof}
Let $\Pb_{\Gammabfs}=\Gammabf (\Gammabf^T \Gammabf)^{-1} \Gammabf^T$ be the orthogonal projection onto the column space of $\Gammabf$ and let $g$ be a function defined in the subspace of the $p\times d$ matrices  of full rank $d$ such that $g(\Gammabf)=\Gammabf(\Gammabf^T\Gammabf)^{-1}\Gammabf^T =\Pb_{\Gammabfs}$.  
From Lemma \ref{auxi} we have that
\[
\nabla g (\Gammabf)= \frac{\partial{\Pb_{\Gammabfs}}}{\partial \vecc^{T}(\Gammabf)} =
(\I_{p^2}+ \K_{pp})(\Gammabf(\Gammabf^{T} \Gammabf)^{-1} \otimes
\Q_{\Gammabfs}).
\]
By the Delta method,
\begin{equation*}
\sqrt{n}  \left( g(\widehat{\Gammabf}) - g(\Gammabf) \right) \rightarrow
\mathcal{N}\left( 0,\nabla g (\Gammabf) \V\nabla^T g (\Gammabf)\right),
\end{equation*}
which completes the proof.
\end{proof}


\subsection*{Proof of  Proposition \ref{AsymptDistn}}

From (\ref{bound}), $\widehat{\alphabf}_{\mathbf b}=\widehat{\Uu}_1$ and therefore $\alphabf_{\mathbf b}=\Uu_1 $ and $\spn (\widehat{\Uu}_1) = \spn (\widehat{\Hbf})$ with 
$\widehat{\Hbf}= \widehat{\Uu}_1\widehat{{\mathbf{K}}}_1 \widehat{\R}_1^T \R_1{\mathbf{K}}^{-1}$ defined in Lemma \ref{lemmas}, which also provides the asymptotic distribution of $\widehat{\Hbf}$. Applying Lemma \ref{lema: Proyeccion} with $\widehat{\Gammabf} = \widehat{\Hbf}$ and $\Gammabf=\Hbf=\Uu_1$ we obtain the asymptotic distribution with asymptotic variance
\begin{eqnarray*}
(\I_{p^2} + \K_{pp})  (\Uu_1 {\mathbf{K}}^{-1}\R_1^T \otimes \Q_{\Uu_1}){\mathbf{V}}_{rlc}(\R_1{\mathbf{K}}^{-1}\Uu_1^T \otimes \Q_{\Uu_1}))  (\I_{p^2} + {\mathbf{K}}_{pp}) 
\end{eqnarray*}
since $\Uu_1^T \Uu_1=\I_d$. 
By (\ref{bbb}), $
\bb = \Uu_1  \mathbf{K} \R^{T}_1
$, therefore $\bb^-=\R_1 \mathbf{K}^{-1} \Uu_1^T$ and the result follows.

\vskip 0.2in

\end{document}